\DeclarePairedDelimiter\floor{\lfloor}{\rfloor}
\definecolor{darkorchid}{rgb}{0.6, 0.2, 0.8}
\theoremstyle{plain}
\newtheorem{thm}{Theorem}[section]
\newtheorem{cor}[thm]{Corollary}
\newtheorem{prop}[thm]{Proposition}
\theoremstyle{definition}
\newtheorem{defn}[thm]{Definition}
\theoremstyle{remark}
\newtheorem{rem}[thm]{Remark}
\theoremstyle{plain}
\newtheorem{conj}[thm]{Conjecture}
\numberwithin{equation}{section}
\newcommand{\eps}{\varepsilon}
\newcommand{\del}{\delta}
\newcommand{\R}{{\mathbb R}}
\newcommand{\N}{{\mathbb N}}
\newcommand{\Z}{{\mathbb Z}}
\newcommand{\calD}{{\mathcal D}}
\def\udot#1{\ifmmode\oalign{$#1$\crcr\hidewidth.\hidewidth
    }\else\oalign{#1\crcr\hidewidth.\hidewidth}\fi}
\def\R{\mathbb{R}}
\def\Z{\mathbb{Z}}
\newenvironment{manualtheorem}[1]{%
  \manualtheoreminner
}{\endmanualtheoreminner}
\newenvironment{manualproposition}[1]{%
  \manualpropositioninner
}{\endmanualpropositioninner}
\begin{document}
	
\title[]{Dyadic analysis meets number theory}
\author{Theresa C. Anderson and Bingyang Hu}

\address{Theresa C. Anderson: Department of Mathematics, Purdue University, 150 N. University St., W. Lafayette, IN 47907, U.S.A.}%
\email{tcanderson@purdue.edu}

\address{Bingyang Hu: Department of Mathematics, Purdue University, 150 N. University St., W. Lafayette, IN 47907, U.S.A.}%
\email{hu776@purdue.edu}

\begin{abstract}
We unite two themes in dyadic analysis and number theory by studying an analogue of the failure of the \emph{Hasse principle} in harmonic analysis.  Explicitly, we construct an explicit family of measures on the real line that are $p$-adic doubling for any finite set of primes, yet not doubling, and we apply these results to show analogous statements about the \emph{reverse H\"older} and \emph{Muckenhoupt $A_p$} classes of weights.  The proofs involve a delicate interplay among several geometric and number theoretic properties.
\end{abstract}
\date{\today}

\thanks{The first author is funded by NSF DMS 1954407 in Analysis and Number Theory.  The authors thank Trevor Wooley for important clarifications on the work of Krantz \cite{Krantz} and the prime number theorem, as well as Qiao He, Yuan Liu, and Ruixiang Zhang for helpful discussions and computational evidence surrounding the work in Section \ref{Section number theory}.}

\maketitle

\tableofcontents

\section{Introduction}
Breaking up the real numbers into a union of dyadic pieces is a central technique in analysis.  Dyadic decompositions underscore many major theorems in analysis, as often dyadic pieces are easier to understand and can be treated in different ways.  One can also break up the real numbers into $n$-adic pieces for any $n\in \N$, and the study of such systems has been a frequent topic of investigation.  However, there are still many fundamental unanswered questions pertaining to measures and functions defined on these systems, such as, if a measure is $n$-adic doubling for all $n$, is it doubling?  In this paper we make substantial progress on these questions by uncovering and greatly developing the underlying number theory present when one ``intersects" two different $n$-adic systems.  This reveals an interesting parallel with the failure of the \emph{Hasse principle} in number theory.  Our main results are the following: we construct a family of measures that are $p$-adic doubling for all primes $p$ contained in any finite set, yet not doubling, and we prove several applications to the theory of weights.

The inspiration for this work comes from questions of the form: if $X$ is an operator/object that is in $p$-adic ``class" for every $p$, then is it in ``class" overall?  For example, $X$ could be a measure, or a function, and ``class" could be ``doubling" or $BMO$ (the class of functions of bounded mean oscillation).  For instance, in unpublished work, Peter Jones explored such a question for the ``p-adic" BMO classes (alluded to in \cite{Krantz}, but known amongst analysts).  Answering such a general question has an interesting parallel with the well-known Hasse principle in number theory: that solutions modulo $p$ for every $p$ can be used to create an integer solution.  If one can show that $X$ being in $p$-adic ``class" for each $p$ means it is in ``class", then one has demonstrated a type of Hasse principle in harmonic analysis.  However, since study of the failure of the Hasse principle is a major area of study in number theory and algebraic geometry, showing that $X$ is not in ``class" has many interesting applications as well.

In the authors previous works together (\cite{AHJOW}, \cite{AH}), as well as in other places (\cite{TM}, \cite{LPW}, \cite{A}), a class of numbers called the \emph{far numbers} (specifically ``far from the dyadic rationals") play a large role in understanding \emph{distinct dyadic systems}, which are a set of grids with the property that every cube is contained in a cube from one of the grids of roughly the same size.  Distinct dyadic systems are highly useful (\cite{Conde}, \cite{DCU}, \cite{GJ}, \cite{AL}, \cite{LN}, \cite{P}, \cite{P2} to name just a few), and in our recent works we were able to completely characterize them in both $\R$ and $\R^n$ (\cite{AHJOW}, \cite{AH}).  Essentially, far numbers are bounded away from the dyadic numbers on every scale (see \eqref{defn01} for a precise definition), and by shifting a dyadic grid by a far number, one can create a distinct dyadic systems for small scale cubes.  In a recent paper \cite{BMW}, Boylan, Mills and Ward constructed a concrete example of a measure on $[0, 1]$, which is both dyadic doubling and triadic doubling, but not doubling.  Though seemingly unrelated, since the dyadic and triadic grids do not form a distinct dyadic system, the concept of far numbers was our original inspiration to further the number theory behind the work of Boylan, Mills and Ward.

The bulk of this paper is devoted to generalizing and strengthening \cite{BMW} to construct a family of measures that are both $p$-adic doubling and $q$-adic doubling for primes $q$ and $p$, but not doubling.  That is:
\begin{thm}
\label{main result}
There exists an infinite family of measures that are both $p$-adic and $q$-adic doubling for any distinct primes $p$ and $q$, but not doubling.
\end{thm}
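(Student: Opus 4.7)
The plan is to generalize the construction of Boylan, Mills and Ward \cite{BMW}, which resolves the case $\{p,q\}=\{2,3\}$, to an arbitrary pair of distinct primes. I will build a measure of the form $d\mu=w(x)\,dx$ on $[0,1]$ with $w=\prod_{n\ge 1} w_n$ a product of step functions, each supported on a short interval $J_n$ chosen to respect both the $p$-adic and $q$-adic structure at a specific scale. The factors $w_n$ will take values in $\{c_n,c_n^{-1},1\}$ for positive constants $c_n\nearrow\infty$, forcing the failure of standard doubling, while the nesting of the $J_n$'s inside the $p$-adic and $q$-adic grids will preserve $p$-adic and $q$-adic doubling with a uniform constant.

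The key number-theoretic input comes from the irrationality of $\log_p q$. By Weyl equidistribution, the fractional parts $\{n\log_p q\}$ are equidistributed, yielding an infinite sequence of integer pairs $(a_n,b_n)$ with $p^{-a_n}\asymp q^{-b_n}$ and with arbitrarily prescribed ratio. Combining this with a Baire category argument (or an explicit continued-fraction construction), one produces a base point $x_*\in(0,1)$ that is simultaneously far from the $p$-adic and $q$-adic rationals, in the sense of \eqref{defn01}. Letting $I_p^{(n)}$ and $I_q^{(n)}$ be the unique $p$-adic cell at scale $p^{-a_n}$ and $q$-adic cell at scale $q^{-b_n}$ containing $x_*$, the far-number hypothesis lets one choose a subinterval $J_n\subset I_p^{(n)}\cap I_q^{(n)}$ of length $\asymp p^{-a_n}$ whose midpoint $m_n$ is bounded away from every $p$-adic and $q$-adic rational at scale comparable to $|J_n|$.

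Now define $w_n$ to take the value $c_n$ on the left half of $J_n$, $c_n^{-1}$ on the right half, and $1$ elsewhere. To verify $p$-adic doubling, fix a $p$-adic cell $I$ at scale $p^{-k}$ and consider the contribution of each $w_n$ to the ratio $\mu(\widehat{I})/\mu(I)$, where $\widehat{I}$ is the $p$-adic parent: when $k\le a_n$, the entire support of $w_n$ lies inside a single $p$-adic cell at scale $p^{-k}$ and so produces no imbalance between $p$-adic siblings at this scale; when $k>a_n$, the imbalance is at most a factor of $c_n^2$, but only boundedly many $n$ contribute to any single scale since the $J_n$'s are nested and shrink like $p^{-a_n}$. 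Choosing $c_n$ to grow slowly (e.g.\ $c_n=\log n$) ensures that the product of all contributions is uniformly bounded, yielding $p$-adic doubling; the symmetric argument with $p\leftrightarrow q$ gives $q$-adic doubling. Failure of standard doubling follows by considering the two adjacent length-$\tfrac12|J_n|$ intervals meeting at $m_n$: these are siblings in neither the $p$-adic nor the $q$-adic grid, yet their $\mu$-masses differ by a factor of $c_n^2\to\infty$. The infinite family of measures is obtained by varying $x_*$ within the uncountable set of doubly far numbers, since distinct $x_*$'s yield mutually singular measures.

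The main obstacle will be the uniform-in-$n$ bookkeeping required to prove both $p$-adic and $q$-adic doubling with a \emph{single} constant. This demands precise control over the relative positions of the $p$-adic and $q$-adic grids near $x_*$, essentially reducing the problem to a (mild) effective equidistribution estimate for $\{n\log_p q\bmod 1\}$. This is the number-theoretic heart of the argument and is the technical content that goes beyond the $\{2,3\}$-case of \cite{BMW}: the constants hidden in the $p$-adic and $q$-adic doubling bounds must depend on $p,q$ only through $\log_p q$ and related quantities, and controlling them requires the delicate "Hasse-like" interplay emphasized in the introduction.
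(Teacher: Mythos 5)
There is a genuine gap: the weight you propose cannot be $q$-adic (or $p$-adic) doubling in the first place, so the construction collapses before the bookkeeping starts. Two concrete failures. (i) The midpoint $m_n$ of $J_n$ cannot lie on both grids (no point of $(0,1)$ is simultaneously a $p$-adic and a $q$-adic rational), and your own requirement makes it \emph{far} from both; hence for every sufficiently small $q$-adic interval $Q$ containing $m_n$, some children of $Q$ lie entirely in the left half of $J_n$ (density $\asymp c_n$) and some entirely in the right half (density $\asymp c_n^{-1}$), so sibling mass ratios are $\asymp c_n^2\to\infty$. Choosing $c_n=\log n$ does not help: the ($p$- or $q$-adic) doubling constant is a supremum over \emph{all} intervals, not a product over scales, so any unbounded sequence $c_n$ destroys it, while a bounded one destroys your non-doubling argument. (ii) $w_n$ does not have mean $1$ on $J_n$: $\fint_{J_n}w_n=(c_n+c_n^{-1})/2\to\infty$, so already at scale comparable to $|J_n|$ the unique child containing $J_n$ has mass inflated by $\asymp c_n$ relative to its siblings; thus the claim that scales $k\le a_n$ ``produce no imbalance'' is false. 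These two defects are exactly why the paper does not place a single unbounded jump at one point: instead the transition from density $\sim a^{\alpha_\ell}$ to $\sim b^{\alpha_\ell}$ is spread over $2\alpha_\ell$ generations of $q$-adic children with mass preserved at each step ($(q-1)a+b=q$) and one-generation ratios only $a/b$ or $b/a$, so $q$-adic doubling is immediate and non-doubling comes from $(a/b)^{\alpha_\ell}\to 0$ across adjacent intervals; the real work (Sections \ref{padic1}--\ref{padic3}, the exhaustion procedure) is then to show $p$-adic doubling.

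The number-theoretic input is also not the right one. Equidistribution of $\{n\log_p q\}$ only matches scales $p^{-a_n}\asymp q^{-b_n}$, and far-number estimates give \emph{lower} bounds on distances to gridpoints — the opposite of what is needed. To make the $p$-adic doubling argument work, the $q$-adic transition point must lie within $\eps|I|$ of a $p$-adic gridpoint $\Upsilon(J)$, on the correct side, for infinitely many structured scales; concretely one needs exact solutions of $kq^{m_2(p-1)}-jp^{m_1(q-1)}=1$ with $k\equiv 1\pmod{p^{C(p,q)+1}}$ and $j\equiv -1\pmod q$, which the paper obtains from the stabilization of the order of $q^{p-1}$ modulo $p^m$ (Propositions \ref{120200724prop01} and \ref{Prop arithmetic progressions}), not from equidistribution or a Baire category argument. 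Remark \ref{going beyond far numbers} explains precisely why far-number reasoning ``only takes you so far'': it gives a fixed fraction of an interval on each side of a gridpoint, not the near-coincidence of distinguished points that the construction requires. So the proposal is missing both the analytic mechanism (multi-generation, mass-preserving redistribution) and the arithmetic mechanism (exact Diophantine alignment of the two grids).
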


But with just a little more effort, we can actually generalize this even more; indeed, our approach allows us to extend any pairs of primes to any finite collection of primes, which can be viewed a nice complement of Jones's result.

\begin{thm} \label{main result 2}
There exists an infinite family of measures that are $p_i$-adic doubling with $i=1, \dots, M$, for any finite collection of primes $\{p_1, \dots, p_M\}$, but not doubling. 
\end{thm}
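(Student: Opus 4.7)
My plan is to extend the two-prime construction underlying Theorem \ref{main result} by a more delicate choice of the point $x_0$ at which the perturbation is localized. The measure in the two-prime case is essentially built by placing a carefully calibrated ``spike'' on a nested sequence of intervals centered at a point $x_0 \in [0,1]$ chosen to be ``far'' from every $p$-adic and $q$-adic rational, in the sense that $|x_0 - k/p^n|\geq c\cdot p^{-n}$ and $|x_0 - \ell/q^m|\geq c\cdot q^{-m}$ uniformly in $k,\ell,n,m$. This double-far condition forces every $p$-adic (respectively $q$-adic) interval containing $x_0$ to see the perturbation only as a uniformly controlled fraction of its mass, which is what prevents the perturbation from destroying $p$-adic or $q$-adic doubling, while still leaving non-adic intervals free to detect a blow-up of ratios across $x_0$.

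For the $M$-prime case, the analogous object is a point $x_0$ that is simultaneously $p_i$-adic far for every $i=1,\dots,M$. To produce such $x_0$, I would run a measure or Baire category argument: for each $i$ and each $c>0$, the set of $x\in[0,1]$ that fails the $p_i$-adic far condition with constant $c$ has Lebesgue measure at most $O_{p_i}(c)$, so on choosing $c$ small enough depending on $M$ and on the $p_i$, the intersection over $i$ of the simultaneously $p_i$-far sets still has positive Lebesgue measure. In particular, it is uncountable, giving an infinite supply of candidates $x_0$. Alternatively, one can build such an $x_0$ greedily via an explicit interval-removal procedure, in the spirit of the constructions in \cite{AHJOW, AH}.

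With such an $x_0$ fixed, the perturbation scheme from Theorem \ref{main result} should go through essentially verbatim: for each $i$ and each $p_i$-adic interval $I\ni x_0$, the $p_i$-far condition translates into uniform two-sided control of the ratio $\mu(I)/\mu(\widehat I)$, where $\widehat I$ is the $p_i$-adic parent of $I$, and $p_i$-adic intervals disjoint from the support of the perturbation are untouched and hence automatically doubling. Non-doubling is again detected by a sequence of non-adic intervals that straddle $x_0$ at arbitrarily small scales. Varying the amplitude of the perturbation and the choice of $x_0$ inside the simultaneously far set then produces the desired infinite family of measures.

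The main obstacle will be bookkeeping the $p_i$-adic doubling constants uniformly across $i\in\{1,\dots,M\}$. One must track how the achievable far constant depends on $M$ and on the largest prime in the collection, and verify that the perturbation can be chosen small enough relative to all of $p_1,\dots,p_M$ at once while still creating a visible non-doubling defect at some non-adic scale. Since $M$ is fixed and finite, I expect this to be a matter of careful constant-chasing rather than a genuinely new conceptual hurdle; no analytic input beyond what is already present in the two-prime case of Theorem \ref{main result} should be needed, which is consistent with the authors' remark that ``a little more effort'' suffices.
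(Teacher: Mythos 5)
There is a genuine gap, and it starts with a mischaracterization of the two-prime construction you are trying to extend. The measure behind Theorem \ref{main result} is not a spike localized at a point chosen \emph{far} from the $p$-adic and $q$-adic rationals. It is a redistribution of mass (with weights $a<1<b$ over $2\alpha_\ell$ generations) inside a sequence of disjoint $q$-adic intervals $I_\ell^{\alpha_\ell}$, and the whole point of the number theory (Propositions \ref{120200724prop01} and \ref{Prop arithmetic progressions}) is to place the junction point $\textZeta(I_\ell^{\alpha_\ell})$ \emph{extremely close to} a $p$-adic rational: one needs $0<\Upsilon(J^\ell)-\textZeta(I_\ell^{\alpha_\ell})\le q^{-100\alpha_\ell}|I_\ell^{\alpha_\ell}|$, i.e.\ the sharp (equality) case of the far-number inequality occurring infinitely often in a structured way. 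The paper is explicit that the far-number property alone ``could only take us so far''; your proposal inverts this and makes farness the engine of the proof.

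Concretely, the simultaneously-far point $x_0$ you produce by a measure/Baire argument would not rescue the construction; it would destroy it. If the weight transition across $\textZeta$ (mass density $\sim a^{\alpha}$ on one side versus $\sim b^{\alpha}$ on the other at the relevant scales) sits at a point that is proportionally in the interior of every $p_i$-adic interval containing it --- which is exactly what $p_i$-farness forces --- then some $p_i$-adic interval at scale comparable to $|H^{(\alpha)}|$ has children landing on opposite sides of the junction, with mass ratio on the order of $(a/b)^{\alpha_\ell}$, unbounded as $\ell\to\infty$; so $p_i$-adic doubling fails. (Conversely, a symmetric ``spike'' such as a power weight at $x_0$ is doubling and breaks nothing.) So the claimed implication ``far $\Rightarrow$ uniform two-sided control of the child ratios'' is precisely the step that fails, and the real multi-prime difficulty is untouched: one must choose $p_1$-adic intervals (with $p_1$, the smallest prime, playing the role of $q$) whose point $\textZeta$ lies just to the left of, and within $q^{-100\alpha_\ell}|I_\ell^{\alpha_\ell}|$ of, the distinguished child endpoint $\Upsilon(J_i^\ell)$ \emph{simultaneously for every} $i=2,\dots,M$. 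In the paper this is done by rerunning the pair argument for each $(p_i,p_1)$ with the stabilized constants replaced by $\max_i C(p_i,p_1)$ and $\max_i m(p_i,p_1)$, and by seeding the selection with pairwise disjoint $\bigl(\prod_{i=2}^M p_i\bigr)$-adic intervals (Theorem \ref{20200827thm01} and Proposition \ref{20200827prop01}); the analysis and exhaustion procedure then carry over unchanged. A category or positive-measure argument for simultaneously far points supplies none of this arithmetic alignment, so the proposal does not yield the theorem.
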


While there are some similarities with the set up from \cite{BMW}, both our approach and the necessary number theory and geometry are quite different.  One of our main novelties is expanding the number theory to work in tandem with the underlying geometry in our setup.  In \cite{BMW}, the authors worked with the very specific case of the primes 2 and 3, which allowed not only detailed concrete analysis, but also avoided many technical difficulties that arise with arbitrary primes: for example, they use the fact that 2 is always a primitive root of $3^n, n \ge 1$.  Without such properties, the necessary number theory does not work.  We replace these concepts by a new flavor of number theory depending the the stability of certain orders over specific powers of primes and how they relate to arithmetic progressions.  The basic idea is the following: by using techniques from elementary number and group theory (see Section \ref{Section number theory}), for infinitely many rationals of the form $\frac{k}{p^n}$ (where $k$ lies in a specific arithmetic progression), we are able to find an infinite arithmetic progression of scales $j$ and $m$ such that $\frac{k}{p^n}$ and $\frac{j}{q^m}$ are close in a precise fashion.

We also are able to quantitatively strengthen the geometric results by using the tools described above.  Since we will be looking at intersections of $q$-adic intervals $I$ and $p$-adic intervals $J$, we focus on the proximity of two distinguished points, the left endpoint of the rightmost child of $I$, which we call $\textZeta(I)$, and the right endpoint of the leftmost child of $J$, called $\Upsilon(J)$.  In particular, we are able to show that $\Upsilon$ and $\textZeta$ lie in a certain relative arrangement, and that no matter how small $I$ is, we always have that for a certain $J$, the difference $\Upsilon(J) - \textZeta(I) < \varepsilon |I|$, for any $\eps >0$ that we wish.  While the far numbers had inspired us in exploring this step, they could only take us so far --  in particular, while far numbers allow us to quantify the fact that any $p$-adic interval that intersects an endpoint of a $q$-adic interval must have at least a fixed fraction both inside and outside the interval, this does not give us the quantitative strength ``to be within $\varepsilon$", that we desire.  To visualize this, pick $q = 2, p = 3$.  The interval $[1/3,2/3)$ has a fixed fraction $1/6 = 1/3\cdot 1/2$ to both the left and right of the point $1/2$, and the interval $[0,1/3)$ has at least $1/12 = 1/3\cdot 1/2^2$ to the right and left of the point $1/4$, but what we really want is for the difference $2/3-1/2$ or $1/3-1/4$ to be much smaller on an infinite number of small scales. Therefore, this principle needs to be developed alongside the inherent number theory.  We fully explain our new ingredients, and the new reasoning around them, including the relationship with the far numbers in the article, specifically in Sections \ref{Section number theory} and \ref{geometry}.

In \cite{BMW}, the authors carefully checked that their underlying measure was triadic doubling, including carefully computing the constants, in which consisted the bulk of their article.  Here we completely restructure this part via our ``\emph{exhaustion procedure}", which in particular allows us to treat all nontrivial cases in a unified manner by focusing only on the two rightmost $p$-adic children (described in great detail in Section \ref{exhaustion procedure}).

Finally, we apply our results to the theory of weights.  We begin by showing several facts pertaining to the reverse H\"older weight classes.  Reverse H\"older weights are closely intertwined with the Muckenhoupt $A_p$ weights and are relevant in multiple applications.  Understanding their structure has been a deep topic of investigation (see, for example \cite{CN}).  Via our construction and proof technique for the previous Theorem \ref{main result 2}, we are able to show the following structure theorem for the \emph{prime reverse H\"older classes} (see Section \ref{applications} for a definition):
\begin{thm}
\label{application theorem}
The finite intersection of prime (r)-reverse H\"older classes is never equal to the full (r)-reverse H\"older class.
\end{thm}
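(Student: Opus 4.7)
The plan is to upgrade the construction behind Theorem \ref{main result 2} from a statement about doubling to a statement about reverse H\"older classes. Recall that the classical $(r)$-reverse H\"older class consists of weights $w$ for which
\[
\left(\frac{1}{|I|}\int_I w^r\,dx\right)^{1/r} \le C\,\frac{1}{|I|}\int_I w\,dx
\]
holds on every interval $I$, while the prime $(r)$-reverse H\"older class $RH_r^{(p)}$ only demands this on $p$-adic $I$. Since membership in $RH_r$ forces $w\,dx$ to be a doubling measure in the usual sense, it suffices to exhibit a weight $w$ whose associated measure $w\,dx$ lies in $\bigcap_{i=1}^M RH_r^{(p_i)}$ yet fails to be doubling; then automatically $w \notin RH_r$, and Theorem \ref{application theorem} follows.

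The natural candidate is to take $w = d\mu/dx$, where $\mu$ is the (piecewise constant, absolutely continuous) measure produced in the proof of Theorem \ref{main result 2}. Failure of the full $RH_r$ inequality will follow from the same geometric mechanism that produced the failure of doubling: for any $\varepsilon > 0$, the number-theoretic/geometric input of Sections \ref{Section number theory} and \ref{geometry} provides a $q$-adic interval $I$ and a $p$-adic interval $J$ with $\Upsilon(J) - \textZeta(I) < \varepsilon |I|$. On the (non-$p_i$-adic) interval that straddles the concentration point, $w$ is essentially a tall bump of relative width $O(\varepsilon)$ surrounded by a constant plateau, so the ratio between its $L^r$- and $L^1$-averages blows up as $\varepsilon \to 0$. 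To verify the prime reverse H\"older condition, I would invoke the exhaustion procedure of Section \ref{exhaustion procedure}: any $p_i$-adic interval either sits in a region where $w$ is constant (so the inequality holds with constant $1$), or contains the bump in a sub-interval whose relative location inside the interval is controlled by the number-theoretic stability lemmas. In the latter case, the step-function structure of $w$ reduces $RH_r$ to a finite family of explicit algebraic inequalities of the form $\left(\sum_k a_k^r |I_k|\right)^{1/r} \le C \sum_k a_k |I_k|$ on the two rightmost $p_i$-children, which one verifies with a uniform constant.

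The main obstacle is that, unlike doubling, the reverse H\"older constant is sensitive to \emph{powers} of the bump height, so the parameters in the original construction of $\mu$ (the bump height sequence and the scale sequence) cannot be reused verbatim. The crux is to rebalance these parameters so that, for each fixed $r$ and each fixed prime $p_i$ in the chosen list, the $L^r$-averaged form of the exhaustion inequality holds with a bound independent of scale, while the global failure mechanism through the $\Upsilon(J) - \textZeta(I)$ arrangement remains intact. Since the geometric and number-theoretic ingredients from Sections \ref{Section number theory}--\ref{geometry} supply an infinite family of admissible scales, one has enough flexibility to pick bump heights that are small enough for the $p_i$-adic $L^r$-check to pass yet still large enough, relative to the anomalous $\varepsilon$-spacing, to force the global $RH_r$ to fail.
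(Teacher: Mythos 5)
Your overall architecture is the same as the paper's: take $w_\mu$ from the construction behind Theorem \ref{main result 2}, use the fact that every $RH_r$ weight is doubling to conclude $w_\mu\notin RH_r$ from non-doubling, verify the prime reverse H\"older conditions via the exhaustion procedure, and tune the construction parameters to the given exponent $r$ (this is exactly what the paper does in Propositions \ref{20200831prop02}, \ref{20200831prop01} and Corollaries \ref{20200901cor01}, \ref{RH cor}).

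Two corrections, one of which matters for your stated ``crux.'' First, your picture of why the global inequality fails is not how the construction works: near the distinguished points there is no tall bump of relative width $O(\varepsilon)$ — the densities on $H^{(2\alpha)}$ and $G^{(2\alpha)}$ are both $a^\alpha b^\alpha$, and the failure of doubling comes from comparing the two sides of $\textZeta$ at the coarser scale, $\mu\bigl(H^{(\alpha)}\bigr)/\mu\bigl(G^{(\alpha)}\bigr)=(a/b)^{\alpha_\ell}$ with $\alpha_\ell\to\infty$. The $\varepsilon$-proximity of $\Upsilon$ and $\textZeta$ is used in the opposite direction: it is what makes the $p$-adic checks tractable (it lets $\Upsilon$ and $\textZeta$ be treated as the same point in the exhaustion procedure), not what causes the global failure. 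Consequently the two-sided tension you propose to resolve — heights ``small enough'' for the prime $L^r$-checks yet ``large enough relative to the $\varepsilon$-spacing'' to break $RH_r$ — does not exist: for any fixed $0<a<1<b$ the measure is automatically non-doubling because $\alpha_\ell$ is unbounded, so the only constraint when choosing parameters is one-sided, namely taking $a,b$ close to $1$ (subject to $(q-1)a+b=q$) so that $r<\ln q/\ln b$, which is precisely how the paper passes from ``some $r$'' to ``every $r>1$.'' Second, the part you defer to ``a finite family of explicit algebraic inequalities verified with a uniform constant'' is in fact the bulk of the paper's Section \ref{applications}: the $q$-adic case requires the geometric-series estimates with $B_1=b^r/q<1$, $B_2=a^r/q<1$ over the five interval types $H^{(k)}$, $G^{(k)}$, $I_\ell^{\alpha_\ell}$ plus a gluing argument for intervals containing many $I_\ell^{\alpha_\ell}$, and the $p$-adic case is then reduced to the $q$-adic one through the exhaustion procedure with the $\Theta_1,\Theta_2,\Theta_3$ comparisons; also note that intervals need not be either entirely in a constant region or contain the whole modified region, which is why those reductions and case analyses are needed. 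So your plan points at the right proof, but the parameter-balancing you identify as the main obstacle is not the real difficulty, and the genuinely laborious uniform-constant verification is left unproved.
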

This theorem holds true for all $1 \leq r <\infty$, and the proof requires a careful selection of the $r$ parameter governing the reverse H\"older classes, as well as an elaborated analysis of the structural properties of the measure used to satisfy Theorems \ref{main result} and \ref{main result 2}.  This leads us to prove a similar statement for \emph{Muckenhoupt $A_p$ weights}, which leads to the following (see Section \ref{applications} for a definition):
\begin{thm}
\label{Ap thm}
The Muckenhoupt class $A_\infty$ is not equal to the intersection of finitely many prime $A_\infty$ classes.
\end{thm}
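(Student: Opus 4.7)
The plan is to reuse the weight $w$ constructed in the proof of Theorem~\ref{application theorem}, observing that it automatically belongs to $\bigcap_{i=1}^M A_\infty^{(p_i)}$ while failing the doubling property, and then invoke the standard fact that every $A_\infty$ weight must be doubling. In this way the present theorem will follow essentially as a corollary of Theorems~\ref{main result 2} and~\ref{application theorem}.

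Concretely, fix a finite family of primes $\{p_1, \dots, p_M\}$ and an admissible exponent $r>1$ as in Theorem~\ref{application theorem}, and let $w$ be the weight produced there. Since $w \in RH_r^{(p_i)}$ for each $i$, and the prime-analogue of the classical identification $A_\infty = \bigcup_{s>1} RH_s$ gives $RH_r^{(p_i)} \subseteq A_\infty^{(p_i)}$, we immediately obtain
\[
w \in \bigcap_{i=1}^{M} A_\infty^{(p_i)}.
\]
It remains to prove $w \notin A_\infty$. For this I would appeal to the well-known fact that every $A_\infty$ weight is doubling in the usual (unrestricted) sense: if $w \in A_p$ for some $1 < p < \infty$, then $w(2I) \le C\, w(I)$ for every interval $I$. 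But by construction the weight $w$ inherits the non-doubling behavior of the measure produced in Theorem~\ref{main result 2} — it is $p_i$-adic doubling for each $i$, yet globally non-doubling. Hence $w$ cannot belong to $A_\infty$, so $A_\infty \subsetneq \bigcap_{i=1}^{M} A_\infty^{(p_i)}$, which is precisely the assertion of the theorem.

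Because all of the substantive work has already been carried out in Theorems~\ref{main result 2} and~\ref{application theorem}, there is no genuine technical obstacle here: the classical chain $RH_r \subseteq A_\infty \subseteq \{\text{doubling weights}\}$ supplies the conclusion at once. The only item to be verified carefully is that the weight inherited from Theorem~\ref{application theorem} continues to exhibit the non-doubling behavior of the underlying measure of Theorem~\ref{main result 2}, and this is immediate from the fact that the former is built directly on top of the latter.
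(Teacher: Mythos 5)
Your argument is correct and is essentially the paper's own route: the paper likewise deduces this theorem from the reverse H\"older result by noting that any prime $RH_r$ weight (which by definition is $p_i$-adic doubling) lies in the corresponding prime $A_{r_1}$ class for some $r_1$, while $w_\mu$ cannot lie in any $A_r$ because the $A_r$ condition forces doubling and $\mu$ is not doubling. The only difference is that the paper's Corollary additionally upgrades the statement to every fixed class $A_r$, $r>1$, via a separate computation with the exponent $-\tfrac{1}{r-1}$, which is not needed for the $A_\infty$ assertion you were asked to prove.
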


More details are found in Section \ref{applications}.  These provide more analogues to Jones's result on the BMO classes; likely many other related applications are possible.
  
One may wonder if our results are extendable to general integers $n_1$ and $n_2$ instead of primes.  First note that if $n_1 = n_2^k$, then it can be easily shown that any $n_1$-adic doubling measure is automatically $n_2$-adic doubling (and vice versa) with constants $C$ and $C^k$.  Hence one could ask if we can get the same results outside of this situation, or even in the case $(n_1, n_2) = 1$.  Even in the latter case this appears to be a very difficult question.  While the construction of the measure and the analysis employed in Sections \ref{analysis 1} - \ref{padic3} could carry through in this setting, we would still crucially rely on the underlying number theory connected to the geometry of this setting, where it appears that several new ideas would be needed (for example, to name one: in place of Fermat's little theorem one can naively expect to use Euler's theorem, but this creates several bottlenecks).  Other, similarly more difficult, extensions are possible; it appears that this area has a lot of intriguing possibilities to explore. 

The organization of our paper is the following: Section \ref{Section number theory} elaborates the number theory that we develop and how it connects to our underlying geometry, which is further detailed in Section \ref{geometry}.  The analytic aspects begin in Section \ref{analysis 1}, with many visible connections to the work \cite{BMW}, but the bulk of our work and novelty occur in Sections \ref{padic1}, \ref{padic2} and \ref{padic3} where we show that our measure is $p$-adic doubling.  In particular, Section \ref{padic1} contains Figures of our measure, and a description, both in text and mathematically, of our exhaustion procedure.  Section \ref{padic2} works out the math for the most involved cases, and Section \ref{padic3} finishes the work, as well as derives Theorem \ref{main result 2} by generalizing the underlying number theory and geometry a bit to handle the new complexity.   Finally, applications to the theory of weights appear in Section \ref{applications}. 

\bigskip

\section{The number theory part}
\label{Section number theory}

The goal of this section is to deal with the number theory which plays an important role in constructing the desired measure in Theorem \ref{main result}. In particular, our results generalize the number theory results which was used in \cite{BMW} in a non-trivial way. More precisely, the authors in \cite{BMW}  consider the case when $(p, q)=(3, 2)$ and their results heavily depended on the following three facts:
\begin{enumerate}
    \item [(a).] $2$ is a primitive root of $3^n$ for any $n \ge 1$;
    
    \medskip
    
    \item [(b).] $2^{3^{n-1}} \equiv 3^n-1 \ (\textrm{mod} \ 3^n)$ for $n \ge 1$;
    
    \medskip
    
    \item [(c).] $2^{2 \cdot 3^{n-2}} \equiv 3^{n-1}+1 \ (\textrm{mod} \ 3^n)$ for $n \ge 2$. 
\end{enumerate}
Note that these facts can easily fail for other pairs of primes. For example, $(p, q)=(7, 2)$, since $2$ is not a primitive root of $7$, hence also for $7^n$ for any $n \ge 1$.  Our result (see, Proposition \ref{120200724prop01}) overcomes these difficulties and allows us to extend the result in \cite{BMW} to any pairs of primes. 

Recall that $p$ and $q$ are two distinct primes. Without the loss of generality, we may assume $p>q$. To begin with, by Fermat's little theorem, we know that
$$
p^{q-1} \equiv 1 \quad (\textrm{mod} \ q)
$$
and 
$$
q^{p-1} \equiv 1 \quad (\textrm{mod} \ p). 
$$
Moreover, we denote $(\Z / n\Z)^*$ the multiplicative group of integers modulo $n$, $n \in \N$.

\begin{prop} \label{120200724prop01}
Let $p, q$ be two distinct primes. Let further, $O_m(p, q)$ be the order of $q^{p-1}$ in $\left( \Z \big/ \left(p^m \Z \right) \right)^*$ for each $m \ge 1$. Then there exists some integer $C(p, q)\geq 0$, such that
$$
 \frac{O_m(p, q)}{p^{m-1}}=\frac{1}{p^{C(p, q)}}. 
$$
when $m$ is sufficiently large. 
\end{prop}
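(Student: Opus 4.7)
The plan is to observe that $q^{p-1}\equiv 1\pmod p$ by Fermat, so $q^{p-1}$ lies in the principal unit subgroup $1+p\Z_p$ of $(\Z/p^m\Z)^*$, and then to read off its order from the $p$-adic valuation of $q^{p-1}-1$ via the Lifting the Exponent Lemma (LTE). Since the paper's standing assumption is $p>q$ and both are prime, we automatically have $p\geq 3$, so the odd-prime version of LTE applies without fuss.

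Concretely, I would set $a := v_p\!\left(q^{p-1}-1\right)$, which is $\geq 1$ by Fermat's little theorem, and write $q^{p-1}=1+p^a u$ with $\gcd(u,p)=1$. The key step is to invoke LTE in the form
\[
v_p\!\left((q^{p-1})^n-1\right) \;=\; v_p(q^{p-1}-1)+v_p(n) \;=\; a+v_p(n),
\]
valid for all $n\geq 1$ because $p$ is odd and $v_p(q^{p-1}-1)\geq 1$. By definition $O_m(p,q)$ is the smallest positive integer $n$ with $p^m\mid (q^{p-1})^n-1$, i.e.\ the smallest $n$ with $a+v_p(n)\geq m$. For $m\geq a$ this forces $v_p(n)\geq m-a$, and the minimal such $n$ is $n=p^{m-a}$ (any smaller positive integer divisible by $p^{m-a}$ does not exist).

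Combining these, for every $m\geq a$ one obtains
\[
\frac{O_m(p,q)}{p^{m-1}} \;=\; \frac{p^{m-a}}{p^{m-1}} \;=\; \frac{1}{p^{a-1}},
\]
so the proposition holds with the constant $C(p,q) := a-1 = v_p(q^{p-1}-1)-1 \geq 0$. The "$m$ sufficiently large" clause corresponds exactly to $m\geq a$; for $m<a$ the element $q^{p-1}$ is already trivial modulo $p^m$ and the quotient has not yet stabilized.

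The only real subtlety is justifying LTE in our setting; this is standard for odd $p$, and the proof is a short induction on $v_p(n)$ from the binomial expansion $(1+p^a u)^p = 1+p^{a+1}u+\binom{p}{2}p^{2a}u^2+\cdots$, where the terms after the first two have strictly larger $p$-adic valuation precisely because $p\geq 3$. Everything else (Fermat, the identification $O_m\mid p^{m-1}$, the explicit formula) is routine. Hence the main obstacle is really only bookkeeping: isolating the correct threshold $a$ and noting that $a\geq 1$ is automatic, so $C(p,q)$ is a well-defined nonnegative integer independent of $m$.
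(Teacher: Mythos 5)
Your proof is correct, and it reaches the same conclusion as the paper by a cleaner, more packaged route. The paper does not invoke LTE: it defines $m(p,q)$ as the smallest integer with $q^{p-1}\not\equiv 1 \pmod{p^{m(p,q)+1}}$ (which is exactly your $a=v_p(q^{p-1}-1)$), picks a minimal exponent $N_0$, and then proves by induction the lifting statement $\left(q^{p-1}\right)^{p^{N_0+\ell-1}}\not\equiv 1 \pmod{p^{m(p,q)+\ell+1}}$ via the same binomial expansion $(1+p^{m+k}s)^p\equiv 1+p^{m+k+1}s \pmod{p^{m+k+2}}$ that underlies your sketch of LTE; so the two arguments rest on the identical mechanism, but you cite the standard lemma once and read off the closed form $O_m(p,q)=p^{m-a}$ for $m\ge a$, whereas the paper re-derives the lifting step by hand and expresses the answer as $C(p,q)=m(p,q)-N_0$. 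Your version buys an explicit constant, $C(p,q)=v_p(q^{p-1}-1)-1$, and in fact shows that the paper's $N_0$ is always equal to $1$, which the paper leaves implicit; it also pins down the exact threshold $m\ge a$ rather than an unspecified ``$m$ sufficiently large.'' One caveat worth recording: your appeal to the odd-prime form of LTE (and likewise the harmless vanishing of the $\binom{p}{2}$ term) genuinely uses $p\ge 3$, which is legitimate here only because of the paper's standing normalization $p>q$; if one wanted the proposition for $p=2$ (say $q\equiv 3 \pmod 4$), the conclusion still holds but the $2$-adic variant of LTE is needed and the value of $C(2,q)$ is no longer $v_2(q-1)-1$, so the formula, not the statement, is what depends on $p$ being odd.
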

\begin{rem}
Note that the claim is easily true for $C(p,q,m) \geq 0$ (where the constant is allowed to depend on $m$) since $O_m(p, q) \mid p^{m-1}$ by Euler's theorem.  The nontrivial claim is that for all $m$ large enough, this constant does not depend on $m$.  Indeed, not only will we be able to ``cancel" the tern $p^m$ in the denominator, but what is left over will be both independent of $m$ and unchanging for $m$ large.
\end{rem}
\begin{proof}
Let $m(p, q)$ be the smallest integer such that
$$
q^{p-1} \not\equiv 1 \ (\textrm{mod} \ p^{m(p, q)+1}). 
$$
This implies that there exists some $N_0 \in \{1, 2, \dots, m(p, q)\}$ such that
\begin{equation} \label{20200726eq01}
\left(q^{p-1} \right)^{p^{N_0}} \equiv 1 \ (\textrm{mod} \ p^{m(p, q)+1}), 
\end{equation}

since by Euler's theorem (applied to $q$), it is always true that
$$
\left(q^{p-1} \right)^{p^{m(p, q)}} \equiv 1 \ (\textrm{mod} \ p^{m(p, q)+1}).
$$
Without the loss of generality, we assume that the $N_0$ fixed above is the smallest, namely
\begin{equation} \label{20200726eq02}
\left(q^{p-1} \right)^{p^{N_0-1}} \not\equiv 1 \ (\textrm{mod} \ p^{m(p, q)+1}). 
\end{equation}

\medskip

\textbf{Claim:} For any $\ell \ge 0$, there holds
\begin{equation} \label{20200727eq01}
    \left(q^{p-1} \right)^{p^{N_0+\ell-1}} \not\equiv 1 \ (\textrm{mod} \ p^{m(p, q)+\ell+1}). 
\end{equation}

\medskip

We prove the claim by induction. The case when $\ell=0$ is exactly \eqref{20200726eq02}. Assume \eqref{20200727eq01} holds when $\ell=k$, that is
\begin{equation} \label{20200727eq02}
\left(q^{p-1} \right)^{p^{N_0+k-1}} \not\equiv 1 \ (\textrm{mod} \ p^{m(p, q)+k+1}), 
\end{equation} 
and we have to prove it for the case when $\ell=k+1$.  By \eqref{20200726eq01} and the fact that: if
$$
a \equiv b \ (\textrm{mod} \ p^\ell),
$$
then 
$$
a^p \equiv b^p \  (\textrm{mod}  \ p^{\ell+1}).
$$
we have
\begin{equation} \label{20200727eq02'}
\left(q^{p-1} \right)^{p^{N_0+k-1}} \equiv 1 \ (\textrm{mod} \ p^{m(p, q)+k}), 
\end{equation}
which, together with \eqref{20200727eq02}, implies that we can write
$$
\left(q^{p-1} \right)^{p^{N_0+k-1}}=p^{m(p, q)+k} \cdot s+1, 
$$
where $p \nmid s$ (otherwise it contradicts \eqref{20200727eq02}). 

Taking the $p$-th power on both sides of the above equation, we have
\begin{eqnarray*}
\left(q^{p-1} \right)^{p^{N_0+k}}%
&\equiv& \left(\left(q^{p-1} \right)^{p^{N_0+k-1}} \right)^p \\
&\equiv& \left(p^{m(p, q)+k} \cdot s+1 \right)^p \\
&\equiv& p^{m(p, q)+k+1} \cdot s+1 \\
& \not\equiv& 1 \quad (\textrm{mod} \ p^{m(p, q)+k+2}); 
\end{eqnarray*}
in the last line above, we use the fact that $p \nmid s$. Therefore, \eqref{20200727eq01} is proved. 

\medskip

Now from \eqref{20200727eq01} and \eqref{20200727eq02'} with $k=\ell+1$, we conclude that there exists some $N_0 \in \{1, \dots, m(p,q)\}$, such that for each $\ell \ge 0$, 
$$
O_{m(p, q)+\ell+1}(p, q)=p^{N_0+\ell},
$$
which  by setting $\ell = m-m(p,q) -1$, implies that when $m \ge m(p, q)+2$, the ratio
$$
\frac{O_m(p, q)}{p^{m-1}}= \frac{O_{m(p,q)+\left(m-m(p,q)-1\right)+1}(p, q)}{p^{m-1}} = \frac{p^{m-m(p, q)-1+N_0}}{p^{m-1}}=\frac{1}{p^{m(p, q)-N_0}}
$$
stabilizes, with $C(p, q)=m(p, q)-N_0$. 
\end{proof}

This statement along with the next fact will completely generalize the underlying number theory and geometry in \cite{BMW}.  After the following proof we fully explain the connection.

\begin{prop}
\label{Prop arithmetic progressions}
Let $p$, $q$ be two distinct primes and $C(p, q)$, $m(p, q)$ be defined as in Proposition \ref{120200724prop01}. Then for any $m_1>\frac{m(p, q)}{q-1}$ and 
\begin{eqnarray} \label{20200727eq04}
k %
&\in& \left\{1, 1+p^{C(p, q)+1}, 1+2p^{C(p, q)+1}, \dots, p^{m_1(q-1)}-p^{C(p, q)+1}+1 \right\} \nonumber \\
&=& \left\{ a \in \left[1, p^{m_1(q-1)}\right]: a \equiv 1 \  \left(\textrm{mod} \ p^{C(p, q)+1} \right) \right\}, 
\end{eqnarray}
there exists infinitely many pairs $j$ and $m_2$, where $m_2 \in \N$, and 
\begin{eqnarray} \label{20200727eq05}
j%
&\in&  \left\{q-1, 2q-1, \dots, q^{m_2(p-1)}-1 \right\} \nonumber \\
&=& \left\{b \in \left[ 1, q^{m_2(p-1)} \right]: b \equiv -1 \ (\textrm{mod} \ q) \right\}, 
\end{eqnarray}
such that
\begin{equation} \label{20200727eq06}
\frac{k}{p^{m_1(q-1)}}-\frac{j}{q^{m_2(p-1)}}=\frac{1}{p^{m_1(q-1)}q^{m_2(p-1)}}.
\end{equation} 
\end{prop}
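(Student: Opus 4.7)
The plan is to clear denominators, reducing \eqref{20200727eq06} to the linear Diophantine equation
$$k q^{m_2(p-1)} - j p^{m_1(q-1)} = 1.$$
For fixed $m_2$, this has an integer solution $j$ if and only if
$$\left(q^{p-1}\right)^{m_2} \equiv k^{-1} \pmod{p^{m_1(q-1)}},$$
so the task splits into (i) producing infinitely many $m_2$ satisfying this congruence, and (ii) verifying that the resulting $j$ lands in the prescribed arithmetic progression and range.

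For (i), I would invoke Proposition \ref{120200724prop01}: the hypothesis $m_1 > m(p,q)/(q-1)$ forces $m_1(q-1)$ to be large enough that $q^{p-1}$ has order exactly $p^{m_1(q-1)-1-C(p,q)}$ in $\left(\Z/p^{m_1(q-1)}\Z\right)^*$. Since $p > q \ge 2$ forces $p$ odd, this group is cyclic, its $p$-Sylow subgroup is the principal units $U^{(1)} = \{a \equiv 1 \pmod p\}$, cyclic of order $p^{m_1(q-1)-1}$, and within this cyclic $p$-group the unique subgroup of order $p^{m_1(q-1)-1-C(p,q)}$ is
$$U^{(C(p,q)+1)} = \{a \equiv 1 \pmod{p^{C(p,q)+1}}\}.$$
Hence $\langle q^{p-1}\rangle = U^{(C(p,q)+1)}$, and the hypothesis \eqref{20200727eq04} says precisely that $k$ (hence also $k^{-1}$) lies in this subgroup, so the congruence in $m_2$ is solvable. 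Its full solution set is an arithmetic progression in $m_2$ with common difference $p^{m_1(q-1)-1-C(p,q)}$, giving infinitely many valid values.

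For (ii), Fermat's little theorem gives $p^{q-1} \equiv 1 \pmod q$, so reducing $j p^{m_1(q-1)} = k q^{m_2(p-1)} - 1$ modulo $q$ and using $m_2(p-1) \ge 1$ yields $j \equiv -1 \pmod q$, placing $j$ in \eqref{20200727eq05}. The upper bound $j \le q^{m_2(p-1)}$ follows from the strict inequality $k < p^{m_1(q-1)}$ built into the range \eqref{20200727eq04}, and $j \ge 1$ holds as soon as $m_2$ is large; since the arithmetic progression of valid $m_2$ produced in (i) is infinite, discarding the initial terms costs nothing.

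The substantive step is the subgroup identification $\langle q^{p-1}\rangle = U^{(C(p,q)+1)}$: Proposition \ref{120200724prop01} pins down the order, but it is the cyclicity of the principal $p$-units (which is why we need $p$ odd, i.e.\ why $p > q$ is imposed) that lets us conclude a subgroup of $U^{(1)}$ is determined by its order. After that, everything reduces to a short congruence check and a size bound automatic for large $m_2$.
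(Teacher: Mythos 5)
Your proposal is correct, and its skeleton is the same as the paper's: clear denominators to get $kq^{m_2(p-1)}-jp^{m_1(q-1)}=1$, observe that solvability in $j$ amounts to the congruence $kq^{m_2(p-1)}\equiv 1 \pmod{p^{m_1(q-1)}}$, use Proposition \ref{120200724prop01} to identify $\left\langle q^{p-1}\right\rangle$ with the residues $\equiv 1 \pmod{p^{C(p,q)+1}}$, and finish with the mod-$q$ reduction (Fermat) and the size bound coming from $k<p^{m_1(q-1)}$. The one genuine point of divergence is how the subgroup identification is established. The paper first checks the containment $\left\langle q^{p-1}\right\rangle\subseteq \left\{a\equiv 1 \ (\mathrm{mod}\ p^{C(p,q)+1})\right\}$ directly, using $C(p,q)+1\le m(p,q)$ so that $q^{p-1}\equiv 1 \pmod{p^{C(p,q)+1}}$, and then upgrades to equality by comparing cardinalities via the order formula $O_{m_1(q-1)}=p^{m_1(q-1)-C(p,q)-1}$; this needs no structure theory for the unit group. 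You instead invoke cyclicity of the principal units modulo an odd prime power and the uniqueness of the subgroup of a given order in a cyclic $p$-group, using only the order from Proposition \ref{120200724prop01}. Both are valid; the paper's route is slightly more self-contained, while yours makes the group-theoretic picture (and the role of the filtration by $U^{(j)}$) more transparent and yields for free that the admissible $m_2$ form a full arithmetic progression of modulus $p^{m_1(q-1)-1-C(p,q)}$, where the paper just appeals to Euler's theorem to produce infinitely many $m_2$. One small caveat: the standing assumption $p>q$ is a general normalization made at the start of Section \ref{Section number theory} for the whole construction, not something imposed so that $p$ is odd for this step; your argument does use oddness of $p$ (the paper's containment-plus-counting argument would not), but in context this costs nothing.
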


\begin{proof}
It is clear that \eqref{20200727eq06} is equivalent to find infinitely many pairs $m_2$ and $j$ which satisfies \eqref{20200727eq05} for the equation 
\begin{equation} \label{20200727eq07}
k q^{m_2(p-1)}-j p^{m_1(q-1)}=1, 
\end{equation}
where $m_1> \frac{m(p, q)}{q-1}$ and $k$ satisfies \eqref{20200727eq04}. 

To begin with, we note that if \eqref{20200727eq07} holds, then $j$ automatically satisfies \eqref{20200727eq06}. Indeed, this follows easily by taking the modulus $q$ on both sides of \eqref{20200727eq07} and the Fermat's little theorem. 
 
Therefore, it suffices for us to solve \eqref{20200727eq07} for infinitely many pairs $m_2$ and $j$. Taking modulus $p^{m_1(q-1)}$ on both sides of \eqref{20200727eq07}, we see that it suffices to solve 
\begin{equation} \label{20200727eq08}
    kq^{m_2(p-1)} \equiv 1 \quad \left(\textrm{mod} \ p^{m_1(q-1)}\right),
\end{equation}
where 
$$
k \in \left\{ a \in \left[1, p^{m_1(q-1)}\right]: a \equiv 1 \  \left(\textrm{mod} \ p^{C(p, q)+1}\right) \right\}.
$$
Denote
$$
G_{m_1}(p, q):=\left\{ a \in \left[1, p^{m_1(q-1)}\right]: a \equiv 1 \  \left(\textrm{mod} \ p^{C(p, q)+1}\right) \right\}.
$$
The solubility of \eqref{20200727eq08} will follow from the following facts.
\begin{enumerate}
    \item [(a).] The set $G_{m_1}(p, q)$ is a subgroup of $\left(\Z / \left(p^{m_1(q-1)} \right) \Z \right)^*$; 
    
    \medskip
    
    \item [(b).] $q^{p-1}$ is a generator of the group $ G_{m_1}(p, q)$. 
\end{enumerate}

Suppose both $(a)$ and $(b)$ hold, it follows that there exists some $m' \in \N$, such that
$$
k \equiv q^{m'(p-1)} \  \left(\textrm{mod} \ p^{m_1(q-1)}\right), 
$$
which implies the desired assertion.  Indeed, this is because
$$
k \in G_{m_1}(p, q)= \langle q^{p-1} \rangle \subseteq \left(\Z / \left(p^{m_1(q-1)} \right) \Z \right)^*. 
$$

Hence, 
$$
kq^{m_2(q-1)} \equiv q^{m'(p-1)} \cdot q^{m_2(p-1)} \equiv q^{(m'+m_2)(p-1)} \left(\textrm{mod} \ p^{ m_1(q-1)}\right).
$$
Now we wish to find $m_2$ such that
$$
q^{(m'+m_2)(p-1)} \equiv 1 \ \left(\textrm{mod} \ p^{ m_1(q-1)}\right)
$$
and this is guaranteed by the fact that
$$
q^{p-1} \quad \textrm{and} \quad p^{ m_1(q-1)}
$$
are coprime and Euler's theorem.    

We now show (a) and (b).  To begin with, we note that
\begin{equation} \label{20200727eq08}
q^{p-1} \equiv 1 \left(\textrm{mod} \ p^{C(p, q)+1}\right), 
\end{equation}
that is, $q^{p-1} \in G_{m_1}(p, q)$. This is because $m(p, q)$ is the smallest integer such that
$$
q^{p-1} \not\equiv 1 \ (\textrm{mod} \ p^{m(p, q)+1})
$$
and $C(p, q)=m(p, q)-N_0$ for some $N_0 \in \{1, \dots, m(p, q)\}$. Next, as an easy consequence of \eqref{20200727eq08}, it is also easy to see that
$$
\left(q^{p-1} \right)^\ell \in G_{m_1}(p, q), \quad \forall \ell \ge 1, 
$$
and hence
$$
\left\langle q^{p-1} \right\rangle \subseteq G_{m_1}(p, q), 
$$
where $\langle q^{p-1} \rangle$ is the cyclic group generated by $q^{p-1} \in \left(\Z / \left(p^{m_1(q-1)} \right) \Z \right)^*$.

Therefore, both assertions $(a)$ and $(b)$ will follow if we can actually show that
$$
\left\langle q^{p-1} \right\rangle=G_{m_1}(p, q),
$$
which follows from the fact that
$$
O_{m_1(q-1)}=p^{m_1(q-1)-C(p, q)-1}= \left|G_{m_1}(p, q) \right|
$$
which crucially hinges on Proposition \ref{120200724prop01}.
The proof is complete. 
\end{proof}

\begin{rem}
\label{going beyond far numbers}
Propositions \ref{120200724prop01} and \ref{Prop arithmetic progressions} indeed can be viewed as a generalization of \cite[Claim 1.13]{BMW}, in which, the authors considered the case when $p=3$ and $q=2$, and they showed that:

\emph{For any $n \in \N$ and $k \in \{1, 4, 7, \dots, 3^n-2\}$, there exists infinitely many pairs $j$ and
 $m$, where $m \in \N$ and $j \in \{1, 3, 5, \dots, 2^m-1\}$, such that
\begin{equation} \label{20200727eq03}
\frac{k}{3^n}-\frac{j}{2^m}=\frac{1}{2^m 3^n}.
\end{equation}
}

A thorough description is in order.

If one wants to generalize \eqref{20200727eq03} to primes $p,q$, the most obvious approach is to show for all $n$ large enough and $k \in \{1,p+1,2p+1, \dots \}$ that there exist infinitely many $m$ and $j\in \{1,q-1, 2q-1, \dots \}$ (one might also think instead of claiming $j \in \{1,q+1, 2q+1, \dots \}$, but this does not work and is not suited to the geometry of the problem) such that 
\begin{equation}\label{20200727eq03'}
    \frac{k}{p^n}-\frac{j}{q^m}=\frac{1}{q^m p^n}.
\end{equation}
A natural choice would be to use the properties of \emph{far numbers} that we have developed in \cite{A}, \cite{AHJOW} (see Definition \ref{defn01}).  Specifically, following similar ideas in all of these (see for example Lemma 2 and Remark 4 in \cite{A}), one can show that 
\[
 \left|\frac{k}{p^n}-\frac{j}{q^m}\right|\geq \frac{C_{p,n}}{q^m}
 \]
with $C_{p,n} = 1/p^n$ and that there exists an infinite sequence of $m$ and $j$ where this $C$ is the best possible.  We start with the first claim: plugging in $C = 1/p^n$ means that we must show that $|kq^m-jp^n| \geq 1$, that is, to prevent $kq^m = jp^n$.  But this is easy since we assume that $q\nmid j$ and $p\nmid k$.  For the second claim, let $C = 1/p^n+\varepsilon$ for $\varepsilon>0$ and arbitrarily small.  Then if this constant were to work, we would need to prevent $kq^m = jp^n+1$ and $kq^m = jp^n-1$.  However, since 
\[
q^{\varphi(p^n)} \equiv 1 \ \left(\textrm{mod} \ p^n\right)
\]
we have, if $k=1$, that $C = 1/p^n$ precisely for an infinite sequence of $m$, $j$, so we have equality (up to possible negative signs) in \eqref{20200727eq03'}.  Moreover, one can easily see by taking moduli that $j \equiv -1  \ \left(\textrm{mod} \ q\right)$.

There is one major issue with this argument: a priori it only works for $k=1$.  This is where Proposition \ref{120200724prop01} and \ref{Prop arithmetic progressions} come into play: the former allows the fractions $k/p^n$ to stabilize when paired with the latter's more specific arithmetic progression of $k$.  That is, by restricting $k$ to lie in the sequence $1 \mod p^h$, for a specific choice of $h$ independent of $n$ (our \eqref{20200727eq04}), we essentially replicate the desired situation where $k$ is always equal to 1 always.

Some additional commentary relating our generalization compared with \cite{BMW} includes:

\begin{enumerate}
    \item [(1).] From the view of number theory, since $2$ is a primitive root modulo $3^n$, the order of $2$ modulo $3^n$ is $2 \cdot 3^{n-1}$, and hence $O_n(3, 2)=3^{n-1}$ always; moreover the set $\{1, 4, 7, \dots, 3^n-2\}$ is exactly the subgroup of $1\mod 3$ contained in $(\Z/3^n\Z)^*$.  Note that $q^{p-1} = 4$, which is generator of this subgroup, mirroring our proof of \eqref{20200727eq08}.  All these suggest that one probably can restate \cite[Claim 1.13]{BMW} simply by
    $$
    C(3, 2)=0.
    $$
    using our work.
    \medskip
    
    \item [(2).] From the view of analysis, we first note that one does not need the full strength of the condition  ``for any $n \in \N$" in the statement of \cite[Claim 1.13]{BMW} for its application to constructing the desired measure. Indeed, it suffices if the statement of \cite[Claim 1.13]{BMW} holds for a subsequence $\{n_i\}_{i \ge 1}$. This is clear from the proof of \cite[Theorem 1.12]{BMW} (see, \cite[Page 272, Line 19--22]{BMW}). In Proposition \ref{120200724prop01}, we have shown that the ratio $\frac{O_m(p, q)}{p^{m-1}}$ stabilizes for $m$ sufficiently large (which is $n$ in \cite{BMW}, which is stronger than necessary. 
    
    Moreover, Proposition \ref{120200724prop01} also generalizes the geometric structure inherited in \cite[Claim 1.13]{BMW}. More precisely, Propositions \ref{120200724prop01} and \ref{Prop arithmetic progressions} generalize the claim that there exists a pair of intervals $I$ and $J$, such that 
    $$
    \Upsilon(J)-\textZeta(I)=\frac{1}{2^m 3^n},
    $$
    where $J$ is a tri-adic interval of sidelength $\frac{1}{3^{n-1}}$ and $I$ is a dyadic interval of sidelength $\frac{1}{2^{m-1}}$ with $m$ even, for any $p$ and $q$.  This will be highlighted in future sections. 
\end{enumerate}
\end{rem}

\subsection{Relationship to a conjecture of Krantz}
There is a connection to a conjecture of Krantz \cite{Krantz} alluded to in \cite{BMW}, and we believe that some clarifications on this would be helpful. The conjecture of Krantz \cite{Krantz} is the following. 

\begin{conj} [\cite{Krantz}] \label{K3}
Let $\epsilon>0$ be arbitrarily small, and $m \in \N$ be sufficiently large. For each positive integer $k$, does there exists a prime $p$, an integer $\beta$ and an integer $n$ with $1/10 \le p^n/2^m \le 10$, such that
$$
\left| \frac{k}{2^m}-\frac{\beta}{p^n} \right|<\frac{\epsilon}{2^m}?
$$
\end{conj}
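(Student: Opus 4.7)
The plan is to attack Krantz's conjecture by combining the arithmetic structure of Propositions \ref{120200724prop01} and \ref{Prop arithmetic progressions}, applied with $q=2$, together with an equidistribution argument that lets the odd prime $p$ and the exponent $n$ vary.

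First I would clear denominators: after optimising over the integer $\beta$, the inequality $|k/2^m - \beta/p^n| < \epsilon/2^m$ together with the window $p^n/2^m \in [1/10, 10]$ is equivalent to requiring the least non-negative residue $r$ of $kp^n \ (\textrm{mod} \ 2^m)$ to satisfy $\min(r, 2^m - r) < \epsilon p^n$. Since $\epsilon p^n$ is of size between $\epsilon \cdot 2^m/10$ and $10\,\epsilon \cdot 2^m$, the conjecture becomes the following question: as $p$ ranges over odd primes and $n$ over admissible exponents, do the residues $\{kp^n \ (\textrm{mod}\ 2^m)\}$ come within $O(\epsilon \cdot 2^m)$ of $0$?

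Second, Proposition \ref{Prop arithmetic progressions} directly answers this positively for a distinguished family of $k$. Rearranging its conclusion with $q=2$ gives that for any odd prime $p$, any $m_1 > m(p,2)$, and any $k \equiv 1 \ (\textrm{mod} \ p^{C(p,2)+1})$, there exist infinitely many pairs $(j, m_2)$ with
\[
\left|\frac{j}{2^{m_2(p-1)}} - \frac{k}{p^{m_1}}\right| = \frac{1}{p^{m_1}\,2^{m_2(p-1)}}.
\]
Setting $m = m_2(p-1)$ and $n = m_1$, and choosing $m_1, m_2$ so that $p^{m_1}/2^{m_2(p-1)} \in [1/10, 10]$ (possible because $\log 2/\log p$ is irrational, so the orbit $\{m_1 \log p - m_2(p-1)\log 2\}$ is dense modulo $\log 10$), one obtains Krantz's approximation for those $k = j$ arising from this construction. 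Proposition \ref{120200724prop01} is what pins down the cyclic subgroup $\langle 2^{p-1}\rangle \subseteq (\Z/p^{m_1}\Z)^\ast$ and hence the density of the underlying arithmetic progression.

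The main obstacle is to cover \emph{all} positive integers $k$, not just those in the sparse image of the construction above. Varying $p$ across many odd primes enlarges the set of reachable residues $k \ (\textrm{mod}\ 2^m)$, and one would hope that their union fills up $\Z/2^m\Z$ up to arcs of length $\epsilon \cdot 2^m$ once $m$ is large. Making this covering effective and uniform in $m$ appears to require a quantitative equidistribution or Chebotarev-type statement governing the joint distribution of the cyclic subgroups $\langle p^{p-1}\rangle$ inside $(\Z/2^m\Z)^\ast$ as $p$ varies, together with uniform control over the stabilisation constant $C(p,2)$. I expect this equidistribution step to be the genuine bottleneck, and likely the reason Krantz's question has remained open.
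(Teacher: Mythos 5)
Your proposal argues in the wrong direction: the paper does not prove Krantz's conjecture, it \emph{disproves} it. Immediately after the statement, the paper establishes Proposition \ref{failK3}: for every fixed $m$ there is a constant $C(m)>0$ such that $\left|\frac{1}{2^m}-\frac{\beta}{p^n}\right| \ge \frac{C(m)}{2^m}$ uniformly over all admissible $(p,n,\beta)$ with $\frac{1}{10}\le \frac{p^n}{2^m}\le 10$. Taking $k=1$ and $\epsilon=\frac{C(m)}{2}$ then answers Krantz's question in the negative (in the reading where $m$ is fixed and $\epsilon$ is arbitrary); the proof is short and elementary, resting on the finiteness of the admissible $(p,n,\beta)$ for fixed $m$ together with the far-number bound from \cite[Corollary 2.10(a)]{AHJOW}. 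The paper is explicit that the only version of the statement that is true is the one where $m$ is allowed to depend on $\epsilon$, and that version follows from the prime number theorem with error term, not from the group-theoretic Propositions \ref{120200724prop01} and \ref{Prop arithmetic progressions}. So a strategy aimed at proving the conjecture as stated cannot succeed: for $k=1$ it would contradict Proposition \ref{failK3}.

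Beyond the directional problem, the intermediate step you do carry out is not sound. The pairs $(j,m_2)$ produced by Proposition \ref{Prop arithmetic progressions} are not free: the exact identity $k\,q^{m_2(p-1)}-j\,p^{m_1(q-1)}=1$ forces $m_2(p-1)$ to lie in a fixed residue class modulo the order of $q^{p-1}$ in $\left(\Z/p^{m_1(q-1)}\Z\right)^*$, which by Proposition \ref{120200724prop01} has size comparable to $p^{m_1(q-1)-C(p,q)-1}$. Consecutive admissible exponents therefore change $2^{m_2(p-1)}$ by an astronomically large factor, so you cannot simultaneously impose the identity and tune $p^{m_1}/2^{m_2(p-1)}$ into the window $[1/10,10]$; the density of $\{m_1\log p-m_2(p-1)\log 2\}$ is irrelevant because $m_1$ and $m_2$ are not independent parameters once the congruence is imposed. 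Indeed, in the paper's own use of this proposition the scales are deliberately taken wildly incomparable (see \eqref{20200805eq02}, where $q^{m_2(p-1)}>10q\cdot p^{m_1(q-1)}$), which is exactly what the geometry of the measure construction requires and exactly what Krantz's window forbids. Finally, you yourself flag the covering of all residues $k$ as an unresolved ``bottleneck,'' so even within your chosen direction the argument is incomplete.
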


Krantz then suggests that if the prime numbers were ``strongly randomly distributed", then this conjecture would be true.  This is not exactly the case; what Krantz mentions would be true if the $m$ in the conjecture is allowed to vary (that is, $m$ can depend on $\epsilon$).  More precisely, if we replace the fixed $m$ (which is $n$ in Krantz's original notation) in the conjecture above by allowing $m$ (and therefore $p$) to vary, then the strong randomness property implies that this (new) statement is true.  The strong randomness property is indeed true via the strong form of the prime number theorem (prime number theorem with error term), which is a classical result.  Therefore, we will interpret Krantz's conjecture in the manner that he writes, but emphasize that the strong randomness property is true and that this property implies a slightly different statement than what Krantz wrote.  This is a subtle, yet important difference.   

While for Conjecture \ref{K3}, we can disprove it by showing the following result: 

\begin{prop} \label{failK3}
For any $m \in \N$, there exists some constant $C(m)>0$, which only depends on $m$, such that
$$
\inf_{\beta \in \N, \  n, p \ \textrm{prime}: \frac{1}{10} \le \frac{p^n}{2^m} \le 10} \left| \frac{1}{2^m}-\frac{\beta}{p^n} \right| \ge \frac{C(m)}{2^m}. 
$$
\end{prop}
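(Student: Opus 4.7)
The plan is to clear denominators and reduce the estimate to a parity observation about the resulting integer. First, I would rewrite
$$
\left|\frac{1}{2^m} - \frac{\beta}{p^n}\right| = \frac{|p^n - \beta \cdot 2^m|}{p^n \cdot 2^m},
$$
so the task becomes bounding $|p^n - \beta\cdot 2^m|$ from below uniformly in the admissible triples $(p, n, \beta)$.

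The key observation is a simple parity one: the prime $p$ in the statement has to be interpreted as odd, since otherwise the trivial choice $p=2$, $n=m$, $\beta=1$ produces difference $0$ and the proposition would be vacuous (this is consistent with the intent of Krantz's Conjecture \ref{K3}, where the point is to approximate dyadic numbers by rationals with a denominator of a \emph{different} prime). Granting this, $p^n$ is odd while $\beta \cdot 2^m$ is even for any $\beta \in \N$ whenever $m \geq 1$. Consequently $p^n - \beta \cdot 2^m$ is a nonzero odd integer, and so
$$
|p^n - \beta \cdot 2^m| \geq 1.
$$

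Combining this lower bound with the constraint $p^n \leq 10\cdot 2^m$ from the infimum yields
$$
\left|\frac{1}{2^m} - \frac{\beta}{p^n}\right| \;\geq\; \frac{1}{p^n \cdot 2^m} \;\geq\; \frac{1}{10\cdot 2^{2m}} \;=\; \frac{C(m)}{2^m},
$$
where $C(m) := \frac{1}{10\cdot 2^m} > 0$ depends only on $m$, as required. The main (and essentially only) conceptual point is clarifying the implicit exclusion of $p=2$; once this is granted, the entire proof is the one-line parity calculation above. Note that the resulting $C(m)$ decays in $m$, which is why this disproves the conjecture only as literally stated (with $m$ fixed), and not the modified version where $m$ is allowed to vary with $\epsilon$—consistent with the discussion preceding the proposition.
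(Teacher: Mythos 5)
Your proof is correct, and it takes a genuinely different (and more elementary) route than the paper's. The paper first disposes of $\beta=0$ and of those $\beta$ with $\bigl|\tfrac{1}{2^m}-\tfrac{\beta}{p^n}\bigr|>1$, observes that the constraint $\tfrac{1}{10}\le \tfrac{p^n}{2^m}\le 10$ leaves only finitely many admissible triples $(p,n,\beta)$, and then invokes the far-number result from \cite{AHJOW} (Corollary 2.10(a)) for each rational $\beta/p^n$, taking $C(m)$ to be a minimum over this finite set; the resulting constant is non-explicit. You instead clear denominators and note that, with $p$ odd and $m\ge 1$, the integer $p^n-\beta\cdot 2^m$ is odd and hence of absolute value at least $1$, which combined with $p^n\le 10\cdot 2^m$ gives the explicit constant $C(m)=\tfrac{1}{10\cdot 2^m}$ uniformly over all admissible triples at once, with no finiteness or case analysis needed. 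In effect you are unpacking the elementary argument that underlies the cited far-number property for rationals with odd denominator (the numerator $\beta 2^j-kp^n$ being a nonzero integer), so the quantitative outcome is essentially the same; the paper's choice to cite \cite{AHJOW} fits its broader narrative about far numbers, while your version is self-contained. Your clarification that $p=2$ must be excluded is also consistent with the paper: its appeal to the $2$-far property of $\beta/p^n$ likewise silently requires $p$ odd, since dyadic rationals are not $2$-far. Your closing remark that $C(m)$ may decay in $m$ without affecting the refutation of the conjecture as stated is accurate as well.
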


Assuming Proposition \ref{failK3}, we turn to disprove Conjecture \ref{failK3}. To do this, we fix a $m$ sufficiently large, and let $\epsilon=\frac{C(m)}{2}$. Moreover, we let $k=1$ there,  which is why we have $\frac{1}{2^m}$ in the statement of Proposition \ref{failK3}, instead of $\frac{k}{2^m}$. The contradiction follows immediately. 

Proposition \ref{failK3} is an immediate consequence of our previous work \cite{AHJOW}. To begin with,  we recall the definition of \emph{far numbers}. 
\begin{defn} \label{defn01}
A real number $\del$ is \emph{$n$-far} if the distance from $\del$ to each given rational $k/n^m$ is at least some fixed multiple of $1/n^m$, where $m \geq 0$, $k\in \Z$. That is, if there exists $C>0$  such that
\begin{equation}
\label{C delta}
   \left| \del-\frac{k}{n^m} \right| \ge \frac{C}{n^m}, \quad \forall m \ge 0, k \in \Z.
\end{equation}
where $C$ may depend on $\del$ but independent of $m$ and $k$. 
\end{defn}

\begin{proof} [Proof of Proposition \ref{failK3}.]
Let us fix $m \in \N$. First of all, since
\begin{equation} \label{20200907eq01}
\frac{1}{10} \le \frac{p^n}{2^m} \le 10, 
\end{equation} 
it is clear that there are only finitely many pairs $(p, n)$ which satisfies \eqref{20200907eq01}, and such a number only depends on $m$,

Now let us take one of these pairs and denote it as $(p, n)$. Observe that there are only finitely many non-zero $\beta$ such that the following holds:
\begin{equation} \label{20200907eq02}
\left|\frac{1}{2^m}-\frac{\beta}{p^n} \right| \le 1, 
\end{equation}
and such a number of $\beta$'s only depends on $p$ and $n$, and hence only depends on $m$. While for those $\beta$ that fail the estimate and $\beta=0$, it suffices, for example, to take $C(m)=1$. To this end, we take a non-zero $\beta$ which satisfies \eqref{20200907eq02}.

Now we apply \cite[Corollary 2.10, (a)]{AHJOW} to see that there exists a constant $C_{\beta, p, n}>0$, which only depends on $\beta, p$ and $n$, such that
$$
\left| \frac{1}{2^{\widetilde{m}}}-\frac{\beta}{p^n} \right| \ge \frac{C_{\beta, p, n}}{2^{\widetilde{m}}}, \quad \forall \widetilde{m} \ge 1,
$$
since $\frac{\beta}{p^n}$ is far with respect to $2$. In particular, letting $\widetilde{m}=m$, we have
$$
\left| \frac{1}{2^m}-\frac{\beta}{p^n} \right| \ge \frac{C_{\beta, p, n}}{2^m}. 
$$
Finally, it suffices to take 
$$
C(m):=\min \left\{\min_{\substack{ \beta: \beta \ \textrm{satisfies \eqref{20200907eq01}}, \beta \neq 0, \\  n, p \ \textrm{prime}: \frac{1}{10} \le  \frac{p^n}{2^m} \le 10}} C_{\beta, p, n}, 1\right\},
$$
which  only depends on $m$.
\end{proof}

\section{The selection procedure}
\label{geometry}
The purpose of this section is to select a collection of disjoint $q$-adic intervals, and these intervals can be treated as a building block of the example of the desired measure. These intervals are chosen carefully according to Proposition \ref{120200724prop01} and Proposition \ref{Prop arithmetic progressions}. 

We recall several definitions.
\begin{defn} \label{20200809defn01}
A \emph{doubling measure} $\mu$ is a measure for which there exists a positive constant $C$ such that for every interval $I \subset \R$, $\mu(2I) \le C\mu(I)$, where $2I$ is the interval which shares the same midpoint of $I$ and twice the length of $I$. 
\end{defn}

\begin{defn}
For $n \ge 1, n \in \N$, the \emph{standard $n$-adic system} $\calD(n)$ is the collection of $n$-adic intervals in $\R$ of the form
\begin{equation} \label{20200722eq01}
I=\left[ \frac{k-1}{n^m}, \frac{k}{n^m} \right), \quad m, k \in \Z. 
\end{equation}
The $n$-adic children of the interval defined in \eqref{20200722eq01} are 
\begin{equation} \label{20200722eq02}
I_j=\left[ \frac{k-1}{n^m}+\frac{j-1}{n^{m+1}}, \frac{k-1}{n^m}+\frac{j}{n^{m+1}} \right), \quad 1 \le j \le n. 
\end{equation}
Moreover, we write $\Upsilon(I)$ be the right endpoint of $I_1$, that is
\begin{equation} \label{20200724eq02}
\Upsilon(I_1)=\frac{k-1}{n^m}+\frac{1}{n^{m+1}},
\end{equation} 
and $\textZeta(I)$ be the left endpoint of $I_n$, that is 
\begin{equation} \label{20200724eq03}
\textZeta(I_n)=\frac{k-1}{n^m}+\frac{n-1}{n^{m+1}}. 
\end{equation}
Finally, we denote $l(I)$ the left endpoint of $I$ and $r(I)$ the right endpoint of $I$ as usual. 
\end{defn}

\begin{defn}
A measure $\mu$ is a \emph{$n$-adic doubling measure} if there exists a positive constant $C$, independent of all parameters, such that for any $n$-adic interval $I$ of the form \eqref{20200722eq01}, 
$$
\frac{1}{C} \le \frac{\mu(I_{j_1})}{\mu(I_{j_2})} \le C, 
$$
where both $I_{j_1}$ and $I_{j_2}$ are some $n$-adic children of $I$, which take the form \eqref{20200722eq02}. The smallest possible constant $C$ are called the \emph{$n$-adic doubling constant} of $\mu$. 
\end{defn}

Recall that we assume that $p$ and $q$ are two distinct primes, with $p>q$. 

\begin{thm} \label{20200722thm01}
There exists a collection of $q$-adic intervals $\{I^{\alpha_\ell}_\ell \}_{\ell \ge 1}$ on $[0, 1)$, where $\alpha_\ell \ge 1$ is a positive integer associated to $\ell$, such that
\begin{enumerate}
    \item [(1).] The collection of $p$-adic intervals $\{J^\ell\}_{\ell \ge 1}$ is pairwise disjoint and contained in $[0, 1)$, where $J^\ell$ is the smallest $p$-adic interval that contains $I_\ell^{\alpha_\ell}$. In particular, the collection $\{I_\ell^{\alpha_\ell}\}_{\ell \ge 1}$ is also pairwise disjoint;
    
    \medskip

    \item [(2).] For each $\alpha \ge 1, \alpha \in \N$, there are only finitely many $\ell \ge 1$, such that $\alpha_\ell=\alpha$. 
    
    \medskip
    
    \item [(3).] For each $\ell \ge 1$, 
    \begin{equation} \label{20200722eq03}
    0<\Upsilon\left(J^\ell \right)-\textZeta\left(I_\ell^{\alpha_\ell} \right) \le q^{-100\alpha_\ell} \left| I_\ell^{\alpha_\ell} \right|. 
    \end{equation}
    Note that since $J^\ell$ is the smallest $p$-adic interval which contains $I_\ell^{\alpha_\ell}$, condition \eqref{20200722eq03} in particular guarantees that the right endpoint of $I_\ell^{\alpha_\ell}$ is to the right of $\Upsilon(J^\ell)$;

\end{enumerate}
\end{thm}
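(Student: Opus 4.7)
The plan is to build $\{I_\ell^{\alpha_\ell}\}_{\ell \ge 1}$ inductively, realizing the quantitative ``closeness'' in (3) at each step via Proposition \ref{Prop arithmetic progressions}. Setting $\alpha_\ell := \ell$ disposes of (2) trivially (one interval per value of $\alpha$). The crucial first move is a dictionary translating the number-theoretic identity of Proposition \ref{Prop arithmetic progressions} into geometry: writing $r+1 = m_1(q-1)$ and $t+1 = m_2(p-1)$, the congruence $k\equiv 1 \ (\textrm{mod}\ p)$ (implied by $k \equiv 1 \ (\textrm{mod}\ p^{C(p,q)+1})$) lets us express $k=(a-1)p+1$, so $k/p^{m_1(q-1)} = \Upsilon(J)$ for the unique $p$-adic interval $J$ of sidelength $p^{-r}$ with left endpoint $(a-1)/p^r$; analogously $j \equiv -1 \ (\textrm{mod}\ q)$ gives $j=bq-1$ and $j/q^{m_2(p-1)} = \textZeta(I)$ for a unique $q$-adic $I$ of sidelength $q^{-t}$. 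The identity in Proposition \ref{Prop arithmetic progressions} then reads
$$
\Upsilon(J) - \textZeta(I) \;=\; \frac{1}{p^{m_1(q-1)}\, q^{m_2(p-1)}},
$$
which is exactly the object we need to estimate in (3).

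The inductive step then reads as follows. Assume $J^1,\dots,J^{\ell-1}$ and the matching $I$'s have been built with $\sum_{\ell'<\ell}|J^{\ell'}| < \tfrac12$, and fix an open subinterval of $E_\ell := [0,1)\setminus \bigcup_{\ell'<\ell} J^{\ell'}$ of length at least $2^{-\ell-2}$. I would choose $m_1 = m_1(\ell)$ so large that: (i) $m_1 > m(p,q)/(q-1)$, as required by Proposition \ref{Prop arithmetic progressions}; (ii) $p^{m_1(q-1)} \ge q^{100\alpha_\ell - 1}$, to secure the quantitative bound in (3); (iii) $p^{1-m_1(q-1)} < 2^{-\ell-10}$, so that $\sum|J^{\ell'}|$ remains $< \tfrac12$; and (iv) the spacing $p^{C(p,q)+1-m_1(q-1)}$ of admissible $\Upsilon$-values is much smaller than $2^{-\ell-2}$. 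Condition (iv) lets me pick $k_\ell$ in the arithmetic progression $\{1+np^{C(p,q)+1}\}$ so that the entire interval $J^\ell$ (and not just the point $\Upsilon(J^\ell)$) lies inside the chosen subinterval of $E_\ell$, which automatically makes $\{J^{\ell'}\}_{\ell'\le\ell}$ pairwise disjoint and contained in $[0,1)$.

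With $k_\ell$ fixed, Proposition \ref{Prop arithmetic progressions} yields infinitely many admissible pairs $(j, m_2)$; I would select $(j_\ell, m_2(\ell))$ with $m_2(\ell)$ so large that $q^{m_2(p-1)} \ge q \cdot p^{m_1(q-1)}$. A short calculation using the displayed identity then forces both $l(J^\ell) \le l(I_\ell^{\alpha_\ell})$ and $r(I_\ell^{\alpha_\ell}) \le r(J^\ell)$, so that $I_\ell^{\alpha_\ell} \subset J^\ell$; moreover, by (ii),
$$
0 \;<\; \Upsilon(J^\ell) - \textZeta(I_\ell^{\alpha_\ell}) \;=\; \frac{1}{p^{m_1(q-1)} q^{m_2(p-1)}} \;=\; \frac{|I_\ell^{\alpha_\ell}|}{q\, p^{m_1(q-1)}} \;\le\; q^{-100\alpha_\ell}\,|I_\ell^{\alpha_\ell}|,
$$
and since this difference is strictly less than $|I_\ell^{\alpha_\ell}|/q$, the interval $I_\ell^{\alpha_\ell}$ straddles the interior $p$-adic boundary $\Upsilon(J^\ell)$ of $J^\ell$, certifying that $J^\ell$ is the smallest $p$-adic interval containing $I_\ell^{\alpha_\ell}$ and that the right endpoint of $I_\ell^{\alpha_\ell}$ lies to the right of $\Upsilon(J^\ell)$.

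The main obstacle is the joint scheduling of $m_1(\ell)$, $k_\ell$, and $m_2(\ell)$: each new $J^\ell$ must simultaneously be disjoint from the previously selected $J^{\ell'}$, live at a scale fine enough to produce the $q^{-100\alpha_\ell}$ factor, and host a properly contained $q$-adic $I_\ell^{\alpha_\ell}$. All three constraints amount to \emph{lower} bounds on the relevant parameters, so the ``infinitely many pairs $(j, m_2)$'' provided by Proposition \ref{Prop arithmetic progressions}, combined with the density of admissible $\Upsilon$-values inside each gap of $E_\ell$ guaranteed by (iv), is precisely what enables the induction to proceed without conflict.
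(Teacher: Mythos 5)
Your proposal is correct and follows essentially the same route as the paper: the same dictionary turning Proposition \ref{Prop arithmetic progressions} into the identity $\Upsilon(J^\ell)-\textZeta(I_\ell^{\alpha_\ell})=\frac{1}{p^{m_1(q-1)}q^{m_2(p-1)}}$, with $m_1$ taken large to force the $q^{-100\alpha_\ell}$ bound and to localize $J^\ell$, and $m_2$ taken large to force $I_\ell^{\alpha_\ell}\subset J^\ell$ and hence (via $\Upsilon(J^\ell)$ being interior to $I_\ell^{\alpha_\ell}$) the minimality of $J^\ell$. The only difference is organizational: the paper isolates this as Proposition \ref{20200805prop01} and applies it inside a pre-chosen pairwise disjoint family of $p$-adic intervals $\widetilde{J}^\ell$, so disjointness is automatic, whereas you secure disjointness by inductively placing each $J^\ell$ in the complement of its predecessors with measure bookkeeping — an equivalent and equally valid arrangement.
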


The specific role of $\alpha$ will be detailed later on; basically it represents the number of generations that we will alter to construct our measure.  We'll refer to the points $\Upsilon$ and $\textZeta$ as distinguished points.

To prove this result, we need the following proposition.

\begin{prop} \label{20200805prop01}
Given any interval $\widetilde{J} \subset [0, 1]$ ($\widetilde{J}$ is not necessarily $p$-adic) and any $\varepsilon>0$, there exists a $q$-adic interval $I \subset \widetilde{J}$ such that 
$$
0<\Upsilon(J)-\textZeta(I) \le \varepsilon|I|,
$$
where $J$ is the smallest $p$-adic interval that contains $I$. 
\end{prop}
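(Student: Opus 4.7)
The plan is to apply Proposition \ref{Prop arithmetic progressions} directly, convert its close-pair-of-rationals conclusion into closeness of distinguished points, and then select $m_1$ and $m_2$ sequentially to secure both the size bound and the two nested containments $I\subset\widetilde{J}$ and $I\subset J$ (with $J$ smallest). The key dictionary is: for $k\equiv 1\pmod{p}$, the $p$-adic point $k/p^n$ equals $\Upsilon(J)$ for the $p$-adic interval $J=[(k-1)/p^n,(k-1)/p^n+1/p^{n-1})$; for $j\equiv -1\pmod{q}$, the $q$-adic point $j/q^m$ equals $\textZeta(I)$ for the $q$-adic interval $I$ of sidelength $1/q^{m-1}$ with right endpoint $(j+1)/q^m$. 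Since Proposition \ref{Prop arithmetic progressions} enforces $k\equiv 1\pmod{p^{C(p,q)+1}}$ (hence $k\equiv 1\pmod p$) and $j\equiv -1\pmod q$, every pair it produces instantly gives $\Upsilon(J)-\textZeta(I)=1/(p^n q^m)>0$ with $n=m_1(q-1)$ and $m=m_2(p-1)$.

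With this dictionary, I would first fix $m_1$ large enough to meet two constraints simultaneously: (i) the progression $\{k/p^n:k\equiv 1\pmod{p^{C(p,q)+1}}\}$ has spacing $1/p^{n-C(p,q)-1}$ smaller than $|\widetilde{J}|/2$, so that some element $k_\star/p^n$ lies in the middle half of $\widetilde{J}$, at distance at least $|\widetilde{J}|/4$ from both endpoints; and (ii) $p^n q\ge 1/\varepsilon$, which already forces $\Upsilon(J)-\textZeta(I)=1/(p^n q^m)\le\varepsilon\cdot q/q^m=\varepsilon|I|$ regardless of $m_2$. Fixing such a $k_\star$, I would then invoke the infinite family of $m_2$ provided by Proposition \ref{Prop arithmetic progressions} and take $m_2$ large enough that $|I|=1/q^{m-1}<|\widetilde{J}|/4$ (forcing $I\subset\widetilde{J}$, since $I$ contains $k_\star/p^n$) and $q^m\ge q\,p^n$. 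A direct endpoint comparison using $\Upsilon(J)-\textZeta(I)=1/(p^n q^m)$ shows that $q^m\ge q\,p^n$ implies both $l(J)\le l(I)$ and $r(I)\le r(J)$, hence $I\subset J$.

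It remains to confirm that $J$ is the \emph{smallest} $p$-adic interval containing $I$. Since $0<\Upsilon(J)-\textZeta(I)<1/q^m\le|I|$, the point $\Upsilon(J)=k_\star/p^n$, which is an interior $p$-adic cut of $J$ at level $n$, lies in the interior of $I$; therefore $I$ straddles a level-$n$ boundary and cannot be contained in any $p$-adic interval of sidelength $\le 1/p^n$, while $I$ does sit inside $J$ of sidelength $1/p^{n-1}$. The main obstacle is the two-scale bookkeeping: $m_1$ must be fixed first (to anchor $k_\star/p^n$ well inside $\widetilde{J}$ and to absorb $\varepsilon$), yet the containment $I\subset J$ then requires $q^{m_2(p-1)}$ to dominate $p^{m_1(q-1)}$ by a fixed factor. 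Proposition \ref{Prop arithmetic progressions} is tailored precisely to this situation, furnishing an infinite family of $m_2$ for every fixed $m_1$ and $k_\star$, so both conditions can be met simultaneously.
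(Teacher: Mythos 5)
Your proposal is correct and follows essentially the same route as the paper: both hinge on Proposition \ref{Prop arithmetic progressions} applied to $k/p^{m_1(q-1)}$ and $j/q^{m_2(p-1)}$, with $m_1$ chosen so that $1/p^{m_1(q-1)}<\varepsilon q$ (absorbing $\varepsilon$), $m_2$ chosen large so that $q^{m_2(p-1)}$ dominates $p^{m_1(q-1)}$ (forcing $I\subset J$), and minimality of $J$ deduced from $\Upsilon(J)$ being an interior point of $I$. The only cosmetic difference is how $I\subset\widetilde{J}$ is secured — you place the anchor point in the middle half of $\widetilde{J}$ and shrink $|I|$, while the paper routes through the largest $q$-adic subinterval $J'\subseteq\widetilde{J}$ — and you should also note that ``$m_1$ large enough'' must include $m_1>m(p,q)/(q-1)$ so that Proposition \ref{Prop arithmetic progressions} applies.
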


\begin{rem}
One of the key differences between our approach and \cite{BMW} is that we can make the difference between the distinguished points arbitrarily small.  This simplifies some of the analysis and allows for great flexibility in our construction.
\end{rem}
\begin{proof}
We start with fixing an interval $\widetilde{J} \subset [0, 1]$ and some $\varepsilon>0$, and we let $J'$ be the largest $q$-adic interval which is contained in $\widetilde{J}$ with sidelength $\frac{1}{q^{m_1'}}$. We choose $m_1> \max\left\{\frac{m(p, q)}{q-1},  m_1' \right\}$ and $\frac{1}{p^{m_1(q-1)}}<\varepsilon q$, and we choose
$$
k \in \left\{1, 1+p^{C(p,q)+1}, 1+2p^{C(p, q)+1}, \dots, p^{m_1(q-1)}-p^{C(p, q)+1}+1 \right\}, 
$$
such that $\frac{k}{p^{m_1(q-1)}} \in J'$. Fix such a pair of $m_1$ and $k$, and let 
$$
J:=\left[ \frac{k-1}{p^{m_1(q-1)}}, \frac{k+p-1}{p^{m_1(q-1)}} \right].
$$
Note that we then have
$$
\Upsilon(J)=\frac{k}{p^{m_1(q-1)}}
$$
and $J \subseteq J' \subseteq \widetilde{J}$ due to the choice of $m_1$ and the fact that $p>q$. 

By Proposition \ref{Prop arithmetic progressions}, there exists infinitely many pairs $m_2 \in \N$ and 
$$
j \in \left\{q-1, 2q-1, \dots, q^{m_2(p-1)}-1 \right\},
$$
such that
\begin{equation} \label{20200805eq01}
\frac{k}{p^{m_1(q-1)}}-\frac{j}{q^{m_2(p-1)}}=\frac{1}{p^{m_1(q-1)}q^{m_2(p-1)}}.
\end{equation} 
We choose such a pair $m_2$ and $j$, with $m_2$ sufficiently large such that
\begin{equation} \label{20200805eq02}
q^{m_2(p-1)}>10q \cdot p^{m_1(q-1)}
\end{equation} 
and let
$$
I:=\left[\frac{j+1-q}{q^{m_2(p-1)}}, \frac{j+1}{q^{m_2(p-1)}} \right].
$$
Note that 
$$
\textZeta(I)=\frac{j}{q^{m_2(p-1)}}.
$$
The desired result will follow if the following assertions are verified: 
\begin{enumerate}
    \item [(i).] $\Upsilon(J)>\textZeta(I)$;
    \item [(ii).] $I \subset J$;
    \item [(iii).] $J$ is the smallest $p$-adic interval containing $I$; 
    \item [(iv).] $\Upsilon(J)-\textZeta(I)<\varepsilon |I|$.
\end{enumerate}

\medskip

\textit{Proof of (i).} This is clear by \eqref{20200805eq01}.

\medskip

\textit{Proof of (ii).} This is indeed guaranteed by \eqref{20200805eq02}. More precisely, let $i(I)$ be the left endpoint of $I$ and $d(I)$ be the right endpont.  Then we have
\begin{eqnarray*}
\left|[i(I), \Upsilon(J)]\right|%
&=& \left|[i(I), \textZeta(I)]\right|+\left|[\textZeta(I), \Upsilon(J)]\right| \\
&=& \frac{q-1}{q^{m_2(p-1)}}+\frac{1}{p^{m_1(q-1)}q^{m_2(p-1)}} \\
&<& \frac{1}{10p^{m_1(q-1)}} \\
&<& \left|[i(J), \Upsilon(J)]\right|
\end{eqnarray*}
where we have used \eqref{20200805eq02} to arrive at the third line (see, Figure \ref{20200805Fig01}). This will imply that both $i(I) - i(J) >0$ and $d(J) - d(I) > 0$, so that $I \subseteq J$. 

\medskip

\begin{figure}[ht]
\begin{tikzpicture}[scale=8.5]
\draw (0,0) -- (1.2,0); 
\fill (0,0) circle [radius=.2pt];
\fill (1,0) circle [radius=.2pt];
\fill (0, -0.01) node [below] {$i(J)$};
\fill (1, -0.01) node [below] {$\Upsilon(J)$};
\fill (.9, 0) circle [radius=.2pt];
\fill (.9, -0.01) node [below] {$\textZeta(I)$};
\fill (1.1, 0) circle [radius=.2pt];
\fill (1.1, -0.01) node [below] {$d(I)$};
\fill (.2, 0) circle [radius=.2pt];
\fill (.2, -0.01) node [below] {$i(I)$};
\draw [decorate,decoration={brace,amplitude=10pt,raise=4pt},yshift=-.1pt] (.2,0) -- (1.1,0) node [black,midway,xshift=0cm, yshift=.8cm] { $I$};

\end{tikzpicture}
\caption{$I \subset J$}
\label{20200805Fig01}
\end{figure}
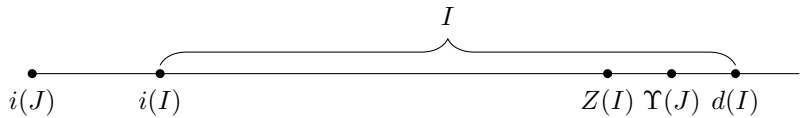

\medskip

\textit{Proof of (iii).} This is also clear since $I$ contains $\Upsilon(J)$ as an interior point (see, Figure \ref{20200805Fig01}) and all other $p$-adic intervals whose sidelength are less or equal to $|J|$ are either disjoint from $\Upsilon(J)$ or contain $\Upsilon(J)$ as an endpoint. 

\medskip

\textit{Proof of (iv).} The last assertion is straightforward from the choice of $m_1$. Indeed, 
$$
\Upsilon(J)-\textZeta(I)=\frac{1}{p^{m_1(q-1)}q^{m_2(p-1)}}<\varepsilon \cdot \frac{q}{q^{m_2(p-1)}}=\varepsilon |I|. 
$$
\end{proof}

\begin{rem}
The condition being used in \eqref{20200805eq01} is quite strong as it asserts that for such a chosen $k$ and $n_1$, we have infinitely many \emph{structured}  $j$ and $m_2$ (that is, $j$ belonging to an arithmetic progression) that give equality.  This implies that the corresponding far number inequality \eqref{defn01} for $\delta = k/p^{n_1}$ is sharp infinitely often in a structured way.  This indicates that for these $\delta$, not only does the sharp constant $C(\delta)$ gives precise geometric information about the proximity of the distinguished points in our construction, but that the sharp constant recurs infinitely often in a structured way.  This phenomenon might extend to other rational far numbers in a related way, and if not, might give a further gradient on which to determine the  \textquotedblleft strength" of a given far number. 
\end{rem}

\begin{proof} [Proof of Theorem \ref{20200722thm01}]
Let us choose an infinite collection of $\{\widetilde{J}^\ell\}_{\ell \ge 1}$ of pairwise disjoint $p$-adic subintervals of $[0, 1]$. For each $\widetilde{J}^\ell$, we associate a natural number $\alpha_\ell$ to it and apply Proposition \ref{20200805prop01} with $\varepsilon \left(=\varepsilon_\ell \right)=q^{-100\alpha_\ell}$ to $\widetilde{J}^\ell$, this yields a $q$-adic interval $I_\ell^{\alpha_\ell}$ and a $p$-adic interval $J^\ell$, which satisfy the third condition in Theorem \ref{20200722thm01}. While for the first condition, we note that for each $\ell \ge 1$, there holds
$$
I_\ell^{\alpha_\ell} \subseteq J^\ell \subset \widetilde{J}^\ell,
$$
and the disjointedness of $\{J^\ell\}_{\ell \ge 1}$ follows from the fact that $\{\widetilde{J}^\ell\}_{\ell \ge 1}$ are pairwise disjoint. Finally, for the second condition, it can be simply achieved by choosing $\alpha_\ell$'s in such a way. 
\end{proof}

\begin{rem}
Upon a very careful reading of this paper, one will hopefully discover that the location of the distinguished points do not matter; it is their relative order and proximity (specifically that they can be arbitrarily close) and consistency of position (that they are interior, an endpoint of a child, and in the same position each time) that matter.  For instance, $\Upsilon$ could be the right endpoint of the child $J_2$ and $\textZeta$ the right endpoint of child $I_9$ if $p=19$ and $q=13$.  However, the underlying number theory might prohibit such an arrangement of a uniform pattern; this is a key reason advocating for the approach that we chose.  In \cite{BMW}, the authors use a different approach, but it should be noted that they could have chosen any of the children's interior endpoints and orderings as well (there are only four possible total choices there), and either choice works due to a variant of their procedure akin to the approach described in our Remark \ref{going beyond far numbers}.   
\end{rem}

\section{The analysis part I: the construction of the measure $\mu$} \label{20200809sec01}
\label{analysis 1}
Theorem \ref{20200722thm01} plays the role of identifying each building block of the targeted measure. In this section, we construct a measure $\mu$ which is both $p$-adic and $q$-adic doubling, but not doubling. The proof of the fact that $\mu$ is $p$-adic will be postponed to the next section.  

From now on, we shall fix a $\ell \in \N$ and pay attention to a single $I_\ell^{\alpha_\ell}$ chosen in Theorem \ref{20200722thm01}, with an integer $\alpha_\ell \in \N$ being associated. The construction of the desired measure will be completed if we apply the construction in this section repeatedly to all $I_\ell^{\alpha_\ell}$'s and equip Lebesgue measure on the rest of $[0, 1) \backslash \left( \bigcup_{\ell} I_\ell^{\alpha_\ell} \right)$. 

Note that from the proof of Proposition \ref{20200805prop01}, we can indeed write
$$
I_\ell^{\alpha_\ell}=\left[\frac{j+1-q}{q^{m(p-1)}}, \frac{j+1}{q^{m(p-1)}} \right]
$$
for some $m \in \N$ and  
$$
j \in \left\{q-1, 2q-1, \dots, q^{m(p-1)}-1 \right\}.
$$
In particular, 
$$
\textZeta \left(I_\ell^{\alpha_\ell} \right)=\frac{j}{q^{m(p-1)}}.
$$
In the rest of this section, we write
$$
I:=I_\ell^{\alpha_\ell}, \ \alpha:=\alpha_\ell \quad \textrm{and} \quad  Z:=Z\left(I_\ell^{\alpha_\ell} \right)
$$
for convenience.

We remark that the number $\alpha$ comes into play a role in the construction. To begin with, we take $0<a<1<b$ such that
$$
(q-1)a+b=q.
$$
Roughly speaking, the idea is to assign the weights $a$ and $b$ carefully to the $q$-adic intervals near the point $\textZeta$. 

\begin{rem}
It is certainly possible that our analysis carries through by choosing $q$ weights $a_1, a_2, \dots , a_{q-1},b$; by doing things this way we have less constants to deal with.
\end{rem}

Here are some details. 

\medskip

\textit{Step $1$:} We start with the interval $I$ and all its $q$-adic children $\{I_1, \dots, I_q\}$ (see, \eqref{20200722eq02}). Define
$$
\mu(I_i)=a|I_i|=\frac{a|I|}{q}, \quad i=1, \dots, q-1.
$$
and
$$
\mu(I_q)=b|I_q|=\frac{b|I|}{q}. 
$$
Note that
\begin{equation} \label{20200810eq01}
\mu(I)=\sum_{i=1}^q \mu(I_i)=\frac{(q-1)a+b}{q}\cdot |I|=|I|. 
\end{equation} 
To this end, we denote
$$
H^{(1)}:=I_{q-1} \quad \textrm{and} \quad G^{(1)}:=I_q.
$$

\medskip

\textit{Step $2$:} We consider the $q$-adic children of $H^{(1)}$ and $G^{(1)}$, and denote them by
$$
\left\{H^{(1)}_1, \dots, H^{(1)}_q \right\}
$$
and 
$$
\left\{G^{(1)}_1, \dots, G^{(1)}_q \right\},
$$
respectively. We redistribute the weight on $H^{(1)}$ and $G^{(1)}$ by defining
$$
\mu\left(H_1^{(1)} \right)=\frac{b \mu \left(H^{(1)} \right)}{q}=\frac{ab |I|}{q^2} ,
$$
$$
\mu\left(H_i^{(1)} \right)=\frac{a \mu \left(H^{(1)} \right)}{q}=\frac{a^2 |I|}{q^2}, \quad i=2, \dots, q, 
$$
and
$$
\mu\left(G_1^{(1)} \right)=\frac{b \mu \left(G^{(1)} \right)}{q}=\frac{b^2 |I|}{q^2},
$$
$$
\mu\left(G_i^{(1)} \right)=\frac{a \mu \left(G^{(1)} \right)}{q}=\frac{ab |I|}{q^2}, \quad i=2, \dots, q.
$$
To this end, we denote 
$$
H^{(2)}:=H_q^{(1)} \quad \textrm{and} \quad G^{(2)}:=G_1^{(1)}. 
$$

\medskip

\textit{Step $k$, $3 \le k \le \alpha$:} Suppose we have already constructed $H^{(k-1)}$ and $G^{(k-1)}$, and our goal is to construct $H^{(k)}$ and $G^{(k)}$ and redistribute the weights on $H^{(k-1)}$ and $G^{(k-1)}$ from the previous step. Note that by induction we have
$$
\mu \left(H^{(k-1)} \right)=\frac{a^{k-1}|I|}{q^{k-1}} \quad \textrm{and} \quad \mu \left( G^{(k-1)} \right)=\frac{b^{k-1}|I|}{q^{k-1}}.
$$

Denote the $q$-adic children of $H^{(k-1)}$ and $G^{(k-1)}$ by
$$
\left\{H^{(k-1)}_1, \dots, H^{(k-1)}_q \right\}
$$
and 
$$
\left\{G^{(k-1)}_1, \dots, G^{(k-1)}_q \right\},
$$
respectively.  We redistribute the weight on $H^{(k-1)}$ and $G^{(k-1)}$ by defining
$$
\mu\left(H_1^{(k-1)} \right)=\frac{b \mu \left(H^{(k-1)} \right)}{q}=\frac{ba^{k-1} |I|}{q^k}
$$
$$
\mu\left(H_i^{(k-1)} \right)=\frac{a \mu \left(H^{(k-1)} \right)}{q}=\frac{a^k |I|}{q^k}, \quad i=2, \dots, q, 
$$
and
$$
\mu\left(G_1^{(k-1)} \right)=\frac{b \mu \left(G^{(k-1)} \right)}{q}=\frac{b^k |I|}{q^k},
$$
$$
\mu\left(G_i^{(k-1)} \right)=\frac{a \mu \left(G^{(k-1)} \right)}{q}=\frac{ab^{k-1} |I|}{q^k}, \quad i=2, \dots, q.
$$
To this end, we denote 
$$
H^{(k)}:=H_q^{(k-1)} \quad \textrm{and} \quad G^{(k)}:=G_1^{(k-1)}. 
$$

\medskip

\textit{Step $\alpha+1$:} From the above construction, we know that 
\begin{equation} \label{20200809eq01}
\mu\left(H^{(\alpha)} \right)=\frac{a^\alpha|I|}{q^\alpha} \quad \textrm{and} \quad \mu\left(G^{(\alpha)} \right)=\frac{b^\alpha|I|}{q^\alpha}.
\end{equation} 
We will now use a different way to distribute the weights $a$ and $b$ to the $q$-adic children of $H^{(\alpha)}$ and $G^{(\alpha)}$. Again, let
$$
\left\{H^{(\alpha)}_1, \dots, H^{(\alpha)}_q \right\}
$$
and 
$$
\left\{G^{(\alpha)}_1, \dots, G^{(\alpha)}_q \right\}
$$
be the $q$-adic children of $H^{(\alpha)}$ and $Q^{(\alpha)}$, respectively. We redistribute the weight on $H^{(\alpha)}$ and $G^{(\alpha)}$ by defining
$$
\mu\left(H_i^{(\alpha)} \right)=\frac{a \mu \left(H^{(\alpha)} \right)}{q}=\frac{a^{\alpha+1} |I|}{q^{\alpha+1}}, \quad i=1, \dots, q-1,
$$
$$
\mu\left(H_q^{(\alpha)} \right)=\frac{b \mu \left(H^{(\alpha)} \right)}{q}=\frac{a^\alpha b |I|}{q^{\alpha+1}},  
$$
and
$$
\mu\left(G_i^{(\alpha)} \right)=\frac{a \mu \left(G^{(\alpha)} \right)}{q}=\frac{ab^\alpha |I|}{q^{\alpha+1}}, \quad i=1, \dots, q-1,
$$
$$
\mu\left(G_q^{(\alpha)} \right)=\frac{b \mu \left(G^{(\alpha)} \right)}{q}=\frac{b^{\alpha+1} |I|}{q^{\alpha+1}}.
$$
To this end, we denote 
$$
H^{(\alpha+1)}:=H_q^{(\alpha)} \quad \textrm{and} \quad G^{(\alpha+1)}:=G_1^{(\alpha)}. 
$$

\medskip

\textit{Step $\alpha+k$, $2 \le k \le \alpha$}. Suppose we have already constructed $H^{(\alpha+k-1)}$ and $G^{(\alpha+k-1)}$ and similarly as before, our goal is to construction $H^{(\alpha+k)}$ and $G^{(\alpha+k)}$ and redistribute the weights on on $H^{(\alpha+k-1)}$ and $G^{(\alpha+k-1)}$ from the previous step. Note that by induction we have
$$
\mu \left(H^{(\alpha+k-1)} \right)=\frac{a^\alpha b^{k-1}|I|}{q^{\alpha+k-1}} \quad \textrm{and} \quad \mu \left( G^{(\alpha+k-1)} \right)=\frac{b^{\alpha} a^{k-1}|I|}{q^{\alpha+k-1}}.
$$

Denote the $q$-adic children of $H^{(\alpha+k-1)}$ and $G^{(\alpha+k-1)}$ by
$$
\left\{H^{(\alpha+k-1)}_1, \dots, H^{(\alpha+k-1)}_q \right\}
$$
and 
$$
\left\{G^{(\alpha+k-1)}_1, \dots, G^{(\alpha+k-1)}_q \right\},
$$
respectively.  We redistribute the weight on $H^{(\alpha+k-1)}$ and $G^{(\alpha+k-1)}$ by defining
$$
\mu\left(H_i^{(\alpha+k-1)} \right)=\frac{a \mu \left(H^{(\alpha+k-1)} \right)}{q}=\frac{a^{\alpha+1} b^{k-1}|I|}{q^{\alpha+k}}, \quad i=1, \dots, q-1,
$$
$$
\mu\left(H_q^{(\alpha+k-1)} \right)=\frac{b \mu \left(H^{(\alpha+k-1)} \right)}{q}=\frac{a^\alpha b^k|I|}{q^{\alpha+k}} 
$$
and
$$
\mu\left(G_i^{(\alpha+k-1)} \right)=\frac{a \mu \left(G^{(\alpha+k-1)} \right)}{q}=\frac{b^\alpha a^k |I|}{q^{\alpha+k}},  \quad i=1, \dots, q-1,
$$
$$
\mu\left(G_q^{(\alpha+k-1)} \right)=\frac{b \mu \left(G^{(\alpha+k-1)} \right)}{q}=\frac{b^{\alpha+1} a^{k-1} |I|}{q^{\alpha+k}}.
$$
To this end, we denote 
$$
H^{(\alpha+k)}:=H_q^{(\alpha+k-1)} \quad \textrm{and} \quad G^{(\alpha+k)}:=G_1^{(\alpha+k-1)}. 
$$

\medskip

The construction will stop at \emph{Step $2\alpha$}. 

\begin{rem}
The fact the construction goes to step $2\alpha$ (instead of, say, $\alpha$) is needed in order to show the measure is $p$-adic doubling, specifically in the case when $\textZeta \in J$ (precisely, one can see that the measure will fail to be $p$-adic doubling by stopping at Step $\alpha$).  Additionally, it allows us to exploit symmetry in the Case where $J$ is to the left of $\textZeta$, see Section \ref{20200815Sec01}.  Finally we remark that variations of our construction might be possible, as long as they stop at Step $c\alpha$, where $c$ is a constant independent of $\alpha$.
\end{rem}
\medskip

At the end of this section, we show that $\mu$ is not doubling but $q$-adic doubling, and we will prove that $\mu$ is $p$-adic doubling in next section. 

\begin{prop}
Let $\mu$ be defined as above. Then 
\begin{enumerate}
    \item [(1).] $\mu$ is not doubling;
    \item [(2).] $\mu$ is $q$-adic doubling. 
\end{enumerate}
\end{prop}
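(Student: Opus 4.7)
I would handle the two claims separately.

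For (1), a routine induction on the step number of the construction in Section \ref{20200809sec01} yields
\[
\mu\bigl(H^{(\alpha_\ell)}\bigr)=\frac{a^{\alpha_\ell}\bigl|I_\ell^{\alpha_\ell}\bigr|}{q^{\alpha_\ell}},\qquad \mu\bigl(G^{(\alpha_\ell)}\bigr)=\frac{b^{\alpha_\ell}\bigl|I_\ell^{\alpha_\ell}\bigr|}{q^{\alpha_\ell}},
\]
where $H^{(\alpha_\ell)}$ and $G^{(\alpha_\ell)}$ are adjacent $q$-adic intervals of equal length meeting at $\textZeta$. The constraints $0<a<1<b$ together with $(q-1)a+b=q$ force $b>a$, while condition (2) of Theorem \ref{20200722thm01} forces $\alpha_\ell\to\infty$, so the ratio $\mu(G^{(\alpha_\ell)})/\mu(H^{(\alpha_\ell)})=(b/a)^{\alpha_\ell}$ diverges. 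To reach a contradiction I would invoke the standard consequence of doubling that any two adjacent intervals $A,B$ of equal length $L$ satisfy $\mu(B)\leq C^2\mu(A)$: indeed $B\subseteq 4A$ (the interval of length $4L$ centered at the midpoint of $A$), and two applications of doubling give $\mu(4A)\leq C^2\mu(A)$.

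For (2), let $K$ be any $q$-adic interval. Since two $q$-adic intervals are either disjoint or nested, the following cases are exhaustive. If $K$ is not contained in any $I_\ell^{\alpha_\ell}$, then each $q$-adic child $K_i$ is a disjoint union of Lebesgue regions together with some entire $I_\ell^{\alpha_\ell}$'s; using $\mu(I_\ell^{\alpha_\ell})=|I_\ell^{\alpha_\ell}|$ from identity \eqref{20200810eq01}, one obtains $\mu(K_i)=|K_i|=|K|/q$ for every $i$, so all children share a common mass. If instead $K\subseteq I_\ell^{\alpha_\ell}$ for the unique such $\ell$, the construction produces a $q$-adic partition $\mathcal P_\ell$ of $I_\ell^{\alpha_\ell}$ on whose cells $\mu$ is a constant multiple of Lebesgue measure, and the only $q$-adic ancestors of more than one cell of $\mathcal P_\ell$ are the chain intervals $I_\ell^{\alpha_\ell}$ and $H^{(k)},G^{(k)}$ for $1\leq k\leq 2\alpha_\ell-1$. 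If $K$ is not a chain interval, then $K$ sits inside a single cell of $\mathcal P_\ell$ and its $q$ children are equi-measure; if $K$ is a chain interval, the explicit weight assignments in Section \ref{20200809sec01} show that the $q$ children of $K$ carry at most two distinct masses, namely $a\mu(K)/q$ and $b\mu(K)/q$. In every case the ratio of child measures is bounded by $b/a$, which then serves as the $q$-adic doubling constant.

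The principal obstacle is bookkeeping rather than analysis: the construction in Section \ref{20200809sec01} only names the measures of the chain intervals explicitly, so one must fix the convention that $\mu$ is Lebesgue-proportional inside each cell of the partition $\mathcal P_\ell$. Once that convention is in place, both parts reduce to direct computations with the formulas produced during the $2\alpha_\ell$ steps of the construction, together with the a priori identity $\mu(I_\ell^{\alpha_\ell})=|I_\ell^{\alpha_\ell}|$.
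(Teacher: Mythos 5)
Your proposal is correct and follows essentially the same route as the paper: part (1) is exactly the paper's argument (the ratio $\mu(G^{(\alpha_\ell)})/\mu(H^{(\alpha_\ell)})=(b/a)^{\alpha_\ell}\to\infty$ on the adjacent equal-length intervals meeting at $\textZeta$, contradicting doubling), and part (2) is a detailed verification of the paper's one-line observation that every $q$-adic interval has children whose measures have ratio $1$, $a/b$, or $b/a$. Your extra bookkeeping (the adjacent-interval consequence of doubling, and the case split into non-chain intervals, chain intervals, and intervals not contained in any $I_\ell^{\alpha_\ell}$) merely fills in details the paper leaves implicit.
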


\begin{proof}
(1). By \eqref{20200809eq01}, we have  
$$
\frac{\mu\left(H^{(\alpha)}\right)}{\mu \left(G^{(\alpha)} \right)}=\left(\frac{a}{b} \right)^\alpha=\left(\frac{a}{b} \right)^{\alpha_\ell}. 
$$
Using Theorem \ref{20200722thm01}, (2), we see that this ratio can be arbitrary small when $\ell$ is sufficiently large, which will clearly fail Definition \ref{20200809defn01} if we consider the interval $H^{(\alpha)} \cup G^{(\alpha)}$. 

\medskip

(2). From the construction, it is clear that given any $q$-adic interval on $[0, 1)$, we have
$$
\frac{\mu(I_{i_1})}{\mu(I_{i_2})}=1, \frac{a}{b}, \ \textrm{or} \ \frac{b}{a}, \quad \textrm{for any} \ i_1, i_2 \in \left\{1, \dots, q \right\}, 
$$
in particular, this implies $\mu$ is $q$-adic.
\end{proof}

\section{The analysis part II: Trivial cases and exhaustion procedure}
\label{padic1}

The goal of the coming two sections is to prove the measure $\mu$ construction in Section \ref{20200809sec01} is $p$-adic doubling, and we start making some reduction in this section.

To begin with, let us take a $p$-adic interval $J \subset [0, 1)$, the goal is to show that there exists a positive constant $C>0$, such that
\begin{equation} \label{20200809eq02}
\frac{1}{C} \le \frac{\mu\left(J_{j_1} \right)}{\mu\left(J_{j_2} \right)} \le C, \quad \forall j_1, j_2 \in \{1, \dots, p\}.  
\end{equation} 

\subsection{Supporting constructions and trivial cases}

First, let us recall from Theorem \ref{20200722thm01} that, for each $I_\ell^{\alpha_\ell}$ which had been chosen, $J^\ell$ is the smallest $p$-adic interval that contains it, and the collection $\{J^\ell\}_{\ell \ge 1}$ is pairwise disjoint. We begin with three trivial cases.

\medskip

\textit{Trivial Case I:} $J$ does not intersect any $J^\ell$.

\medskip

\textit{Trivial Case II:} $J$ intersects more than two $J^\ell$'s. 

\medskip

\textit{Trivial Case III:} $J$ intersects a single $J^\ell$ but contains it strictly. 

\medskip

Indeed, one can see that in all of these cases, all the ratios in \eqref{20200809eq02} take the value $1$. More precisely, for the first case, since $J$ does not intersect any $J^\ell$, the measure $\mu$ restricted to $J$ is exactly the Lebesgue measure. While for the second case, since $J$ intersects more than two $J^\ell$'s, $J$ has to be a $p$-adic ancestor of those $J^\ell$'s which intersect with $J$. The desired claim for the second case follows from the fact that  $\mu(J^\ell)=|J^\ell|$ for all $\ell \ge 1$ (which follows from \eqref{20200810eq01} easily). Finally, the third case holds for the same reason. 

\medskip

Therefore, it suffices for us to consider the case \emph{when $J$ coincides with one of the $J^\ell$'s or with one of their $p$-adic offspring}. Again, let us fix some $\ell \in \N$, and write 
$$
 I:=I_\ell^{\alpha_\ell}, \alpha:=\alpha_\ell, Z:=Z\left(I_\ell^{\alpha_\ell}\right) \ \textrm{and} \ \Upsilon:=\Upsilon\left(J^\ell\right)
$$
for convenience. To begin with, let us observe the measure $\mu$ constructed in Section \ref{20200809sec01} in a more compact way. More precisely, we start with $H^{(2\alpha)}$ and $G^{(2\alpha)}$, and recall that
$$
\mu \left( H^{(2\alpha)} \right)=\mu \left( G^{(2\alpha)} \right)=\frac{a^\alpha b^\alpha |I|}{q^{2\alpha}}.
$$
We define the following: for any $1 \le k \le 2\alpha-1$
$$
F^{(k)}:=G^{(k)} \backslash G^{(k+1)} \quad \textrm{and} \quad E^{(k)}:=H^{(k)} \backslash H^{(k+1)}
$$
and 
$$
E^{(0)}:=I_1 \cup \dots \cup I_{q-2}.
$$
(see, Figure \eqref{20200824Fig01}).

\medskip

\begin{figure}[ht]
\begin{tikzpicture}[scale=5.5]
\draw (-.5,0) -- (1.55,0); 
\fill (.6, 0) circle [radius=.2pt];
\fill (.6, -.01) node [below] {\tiny $\textZeta$};
\fill (.66, 0) circle [radius=.2pt];
\fill (.66, -.01) node [below] {\tiny $\Upsilon$};
\fill (.8, 0) circle [radius=.2pt];
\draw [decorate,decoration={brace,amplitude=8pt,raise=0pt},yshift=.3pt] (.6, 0) -- (.8, 0) node [black,midway,xshift=0cm, yshift=.4cm] {\tiny $G^{(2\alpha)}$};
\fill (1.2, 0) circle [radius=.2pt];
\draw [decorate,decoration={brace,amplitude=8pt,raise=0pt},yshift=.3pt] (.8, 0) -- (1.2, 0) node [black,midway,xshift=0cm, yshift=.4cm] {\tiny $F^{(2\alpha-1)}$};
\fill (1.4, 0.01) node [above] {\small $\dots$};
\fill (.4, 0) circle [radius=.2pt];
\draw [decorate,decoration={brace,amplitude=8pt,raise=0pt},yshift=.3pt] (.4, 0) -- (.6 , 0) node [black,midway,xshift=0cm, yshift=.4cm] {\tiny $H^{(2\alpha)}$};
\fill (0, 0) circle [radius=.2pt];
\draw [decorate,decoration={brace,amplitude=8pt,raise=0pt},yshift=.3pt] (0, 0) -- (.4 , 0) node [black,midway,xshift=0cm, yshift=.4cm] {\tiny $E^{(2\alpha-1)}$};
\fill (-.25, .01) node [above]{\small $\dots$};
\end{tikzpicture}
\caption{$F^{(k)}$'s, $E^{(k)}$'s, $H^{(2\alpha)}$ and $G^{(2\alpha)}$.}
\label{20200824Fig01}
\end{figure}
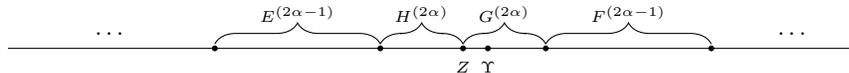

\begin{rem}
Let us make some remarks.
\begin{enumerate}
    \item [(1).] If $q=2$, then there is no need to consider $E^{(0)}$ since $E^{(0)}$ is  empty in this case;
    \item [(2).] One can also define $F^{(0)}$, but it is always empty.
\end{enumerate}
\end{rem}
By the construction in Section \ref{20200809sec01}, we have 
$$
\mu\left(F^{(k)} \right)=
\begin{cases}
(q-a) \cdot \frac{a^{k-\alpha} b^{\alpha}|I|}{q^{k+1}} , \quad \quad \quad \hfill \alpha \le k \le 2\alpha-1; \\
\medskip \\
(q-b) \cdot \frac{b^k |I|}{q^{k+1}}, \quad \quad \quad \hfill 1 \le k \le \alpha-1,
\end{cases}
$$
while 
$$
\left| F^{(k)} \right|=\frac{q-1}{q} \cdot \left|G^{(k)} \right|= \frac{(q-1)|I|}{q^{k+1}}, \quad 1 \le k \le 2\alpha-1.  
$$
Similarly, 
\begin{equation} \label{20200812eq01}
\mu\left(E^{(k)} \right)=
\begin{cases} 
(q-b) \cdot \frac{b^{k-\alpha} a^{\alpha}|I|}{q^{k+1}} , \quad \quad \quad \hfill \alpha \le k \le 2\alpha-1; \\
\medskip \\
(q-a) \cdot \frac{a^k |I|}{q^{k+1}}, \quad \quad \quad \hfill 1 \le k \le \alpha-1,
\end{cases}
\end{equation} 
while 
\begin{equation} \label{20200812eq02}
\left| E^{(k)} \right|=\frac{q-1}{q} \cdot \left|H^{(k)} \right|= \frac{(q-1)|I|}{q^{k+1}}, \quad 1 \le k \le 2\alpha-1.  
\end{equation} 
Finally, we have
$$
\mu\left(E^{(0)} \right)=\frac{a(q-2)|I|}{q} \quad \textrm{and} \quad 
\left| E^{(0)} \right|=\frac{(q-2)|I|}{q}. 
$$

Note that no $p$-adic interval can ever be equal to any of the $E^{(k)}$, $H^{(k)}$, $G^{(k)}$ or $F^{(k)}$.

We now consider one more easy case.

\subsubsection{When $J=J^\ell$.} \label{Case1} Let
$$
\{J_1, \dots, J_p\}
$$
be all the $p$-adic children of $J$, and note that in particular we have $r\left(J_1 \right)=l\left(J_2 \right)=\Upsilon$. We recall that the goal is to show \eqref{20200809eq02}. 

Since $I \subset J^\ell=J$ and $\Upsilon-\textZeta>0$, it follows that $I \subset J_1 \cup J_2$, and hence 
$$
\mu(J_i)=\frac{|J|}{p}, \quad j=3, \dots, p, 
$$
since we have $\{J^\ell\}_{\ell \ge 1}$ are pairwise disjoint. Moreover, by \eqref{20200805eq02}, we have
$$
\frac{\left|J_1 \right|}{2}=\frac{\left|J_2 \right|}{2}=\frac{|J|}{2p}>|I|.
$$
This implies the left half of $J_1$ and the right half of $J_2$ do not intersect with $I$, and combining this with the fact that $\mu(I)=|I|$, we have 
$$
\frac{|J|}{2p} \le \mu\left(J_i \right)\le \frac{2|J|}{p}, \quad i=1, 2, 
$$
and hence the ratio in \eqref{20200809eq02} is bounded above by $4$ and below by $\frac{1}{4}$, which implies the desired result.

\subsection{Visualization of the measure} Let us now turn to the non-trivial case of the proof, that is, the case \emph{the case when $J$ coincides with one of the offspring of a single $J^\ell$ and $J \subsetneq J^\ell$}. It will be convenient for us to see directly how the measure $\mu$ looks like. We start with visualizing $\mu$ on the left hand side of $Z$ (see, Figure \ref{20200811Fig01}).

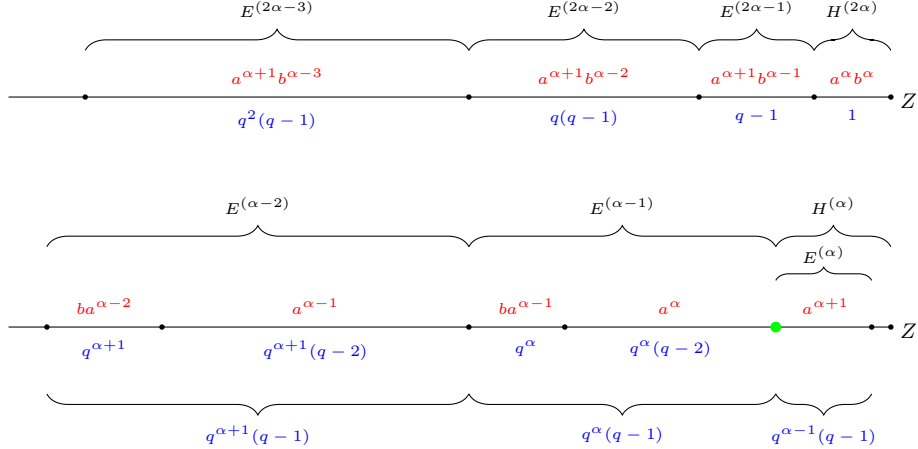
\begin{figure}[ht]
\begin{tikzpicture}[scale=5.1]
\draw (-.8,0) -- (1.5,0); 
\fill (1.5,0) circle [radius=.2pt];
\fill (1.5, -0.01) node [right] {{\footnotesize $\textZeta$}};
\fill (1.3,0) circle [radius=.2pt];
\fill (1,0) circle [radius=.2pt];
\fill (.4,0) circle [radius=.2pt];
\fill (-.6,0) circle [radius=.2pt];
\draw [decorate,decoration={brace,amplitude=8pt,raise=10pt},yshift=1.5pt] (1.3,0) -- (1.5,0) node [black,midway,xshift=0cm, yshift=.9cm] { {\tiny $H^{(2\alpha)}$}};
\draw [decorate,decoration={brace,amplitude=8pt,raise=10pt},yshift=1.5pt] (1.0,0) -- (1.3,0) node [black,midway,xshift=0cm, yshift=.9cm] { {\tiny $E^{(2\alpha-1)}$}};
\draw [decorate,decoration={brace,amplitude=8pt,raise=10pt},yshift=1.5pt] (.4,0) -- (1.0,0) node [black,midway,xshift=0cm, yshift=.9cm] { {\tiny $E^{(2\alpha-2)}$}};
\draw [decorate,decoration={brace,amplitude=8pt,raise=10pt},yshift=1.5pt] (-.6,0) -- (0.4,0) node [black,midway,xshift=0cm, yshift=.9cm] { {\tiny $E^{(2\alpha-3)}$}};
\fill (1.4, -.01) node [below] {{\color{blue}{\tiny $1$}}}; 
\fill (1.15, -.01) node [below] {{\color{blue}{\tiny $q-1$}}};
\fill (.7, -.01) node [below] {{{\color{blue}\tiny $q(q-1)$}}};
\fill (-.1, -.01) node [below] {{{\color{blue} \tiny $q^2(q-1)$}}};
\fill (1.4, .01) node [above] {{{\color{red} \tiny $a^\alpha b^\alpha$}}}; 
\fill (1.15, .01) node [above] {{{\color{red} \tiny $a^{\alpha+1} b^{\alpha-1}$}}};
\fill (.7, .01) node [above] {{{\color{red} \tiny $a^{\alpha+1} b^{\alpha-2}$}}};
\fill (-.1, .01) node [above] {{{\color{red}\tiny $a^{\alpha+1} b^{\alpha-3}$}}};
\draw (-.8,-.6) -- (1.5, -.6); 
\fill (1.5,-.6) circle [radius=.2pt];
\fill (1.5, -.61) node [right] {{\footnotesize $\textZeta$}};
\fill[green] (1.2,-.6) circle [radius=.4pt];
\fill (1.45, -.6) circle [radius=.2pt];
\draw [decorate,decoration={brace,amplitude=8pt,raise=10pt},yshift=4pt] (1.2,-.6) -- (1.5,-.6) node [black,midway,xshift=0cm, yshift=.9cm] { {\tiny $H^{(\alpha)}$}};
\draw [decorate,decoration={brace,amplitude=5pt,raise=10pt},yshift=1.5pt] (1.2,-.6) -- (1.45,-.6) node [black,midway,xshift=0cm, yshift=.7cm] { {\tiny $E^{(\alpha)}$}};
\fill (.4, -.6) circle [radius=.2pt]; 
\draw [decorate,decoration={brace,amplitude=8pt,raise=10pt},yshift=4pt] (.4,-.6) -- (1.2,-.6) node [black,midway,xshift=0cm, yshift=.9cm] { {\tiny $E^{(\alpha-1)}$}};
\fill (1.33, -.59) node [above] {{\color{red}\tiny $a^{\alpha+1}$}};
\fill (0.65, -.6) circle [radius=.2pt]; 
\fill (.55, -.61) node [below] {{\color{blue}\tiny $q^\alpha$}};
\fill (.925, -.61) node [below] {{\color{blue} \tiny $q^\alpha(q-2)$}}; 
\draw [decorate,decoration={brace,amplitude=8pt,raise=10pt},yshift=-3pt] (1.2,-.6) -- (.4,-.6) node [black,midway,xshift=0cm, yshift=-.9cm] {{{\color{blue}\tiny $q^\alpha(q-1)$}}};
\draw [decorate,decoration={brace,amplitude=8pt,raise=10pt},yshift=-3pt] (1.45,-.6) -- (1.2,-.6) node [black,midway,xshift=0cm, yshift=-.9cm] {{{\color{blue}\tiny $q^{\alpha-1}(q-1)$}}};
\fill (.55, -.59) node [above] {{\color{red}\tiny $ba^{\alpha-1}$}}; 
\fill (.925, -.59) node [above] {{\color{red} \tiny $a^\alpha$}}; 
\fill (-.7, -.6) circle [radius=.2pt]; 
\fill (-.4, -.6) circle [radius=.2pt]; 
\fill (-.55, -.61) node [below] {{\color{blue}\tiny $q^{\alpha+1}$}};
\fill (0, -.61) node [below] {{\color{blue}\tiny $q^{\alpha+1}(q-2)$}};
\draw [decorate,decoration={brace,amplitude=8pt,raise=10pt},yshift=-3pt] (.4,-.6) -- (-.7,-.6) node [black,midway,xshift=0cm, yshift=-.9cm] { {{\color{blue}\tiny $q^{\alpha+1}(q-1)$}}};
\draw [decorate,decoration={brace,amplitude=8pt,raise=10pt},yshift=4pt] (-.7,-.6) -- (.4,-.6) node [black,midway,xshift=0cm, yshift=.9cm] { {\tiny $E^{(\alpha-2)}$}};
\fill (-.55, -.59) node [above] {{\color{red}\tiny $ba^{\alpha-2}$}};
\fill (0, -.59) node [above] {{\color{red}\tiny $a^{\alpha-1}$}};
\end{tikzpicture}
\caption{$\mu$ on the left hand side of $\textZeta$.}
\label{20200811Fig01}
\end{figure}

Let us make some remarks for Figure \ref{20200811Fig01}. 

\begin{enumerate}
    \item [(1).] The red parts corresponds the \emph{weight} associated to each $E^{(k)}$, $1 \le k \le 2\alpha-1$ and $H^{(2\alpha)}$. For example, on $E^{(2\alpha-2)}$, we have the weight $a^{\alpha+1}b^{\alpha-2}$, which means $d \mu |_{E^{(2\alpha-2)}}=\left(a^{\alpha+1}b^{\alpha-2} \right) dx$, where $dx$ is the Lebesgue measure;
    
    \item [(2).] The blue parts refers to the \emph{ratio} of the lengths between the targeted interval and $H^{(2\alpha)}$ (see, \eqref{20200812eq02}). For example, under $E^{(2\alpha-2)}$, we have the ratio $q(q-1)$, which means $$\frac{\left|E^{(2\alpha-2)} \right|}{\left|H^{(2\alpha)}\right|}=q(q-1);$$ 
    
    \item [(3).] The behavior of the measure $\mu$ follows two different patterns on the left hand side of $\textZeta$, with the distinguished point $l\left(H^{\alpha} \right)$ (the green point in Figure \ref{20200811Fig01}) and this corresponds the fact that we distribute the weight in a different way from \emph{Step $\alpha+1$} onward (see, Section \ref{20200809sec01}). 
    
    More precisely, when $1 \le k \le \alpha-1$, the weight and ratio associated to $E^{(k)}$ is given by Figure \ref{20200811Fig02}:

    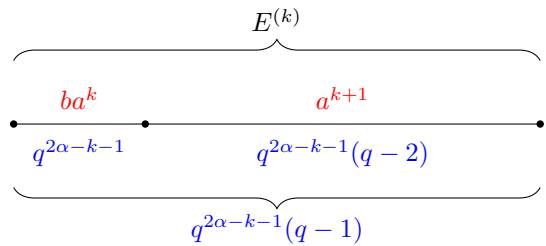
\begin{figure}[ht]
\begin{tikzpicture}[scale=7]
\draw (0, 0)--(1, 0);
\fill (0, 0) circle [radius=.2pt];
\fill (1, 0) circle [radius=.2pt];
\fill (.25, 0) circle [radius=.2pt];
\fill (.125, -.01) node [below] {{\color{blue} $q^{2\alpha-k-1}$}};
\fill (.625, -.01) node [below] {{\color{blue} $q^{2\alpha-k-1}(q-2)$}};
\draw [decorate,decoration={brace,amplitude=8pt,raise=10pt},yshift=-2pt] (1, 0) -- (0, 0) node [black,midway,xshift=0cm, yshift=-.9cm] { {{\color{blue} $q^{2\alpha-k-1}(q-1)$}}};
\fill (.125, .01) node [above] {{\color{red} $ba^k$}};
\fill (.625, .01) node [above] {{\color{red} $a^{k+1}$}};
\draw [decorate,decoration={brace,amplitude=8pt,raise=10pt},yshift=2pt] (0, 0) -- (1, 0) node [black,midway,xshift=0cm, yshift=.9cm]  {$E^{(k)}$};
\end{tikzpicture}
\caption{$E^{(k)}$ with $1 \le k \le \alpha-1$.}
\label{20200811Fig02}
\end{figure}

 and when $\alpha \le k \le 2\alpha-1$ it is given by Figure \ref{20200811Fig03}:

\begin{figure}[ht]
\begin{tikzpicture}[scale=7]
\draw (0, 0)--(1, 0);
\fill (0, 0) circle [radius=.2pt];
\fill (1, 0) circle [radius=.2pt];
\fill (.5, -.01) node [below] {{\color{blue} $q^{2\alpha-k-1}(q-1)$}};
\fill (.5, .01) node [above] {{\color{red} $a^{\alpha+1}b^{k-\alpha}$}};
\draw [decorate,decoration={brace,amplitude=8pt,raise=10pt},yshift=2pt] (0, 0) -- (1, 0) node [black,midway,xshift=0cm, yshift=.9cm]  {$E^{(k)}$};
\end{tikzpicture}
\caption{$E^{(k)}$ with $\alpha \le k \le 2\alpha-1$.}
\label{20200811Fig03}
\end{figure}

    \item[(4).] We can easily recover the previous calculation \eqref{20200812eq02} by using these figures. Indeed, when $1 \le k \le \alpha-1$,
    \begin{eqnarray*}
    \mu\left(E^{(k)} \right)%
    &=& ba^k \cdot q^{2\alpha-k-1} \cdot \frac{|I|}{q^{2\alpha}}+a^{k+1} \cdot q^{2\alpha-k-1}(q-2) \cdot \frac{|I|}{q^{2\alpha}} \\
    &=& \left(b+a(q-2) \right) \cdot \frac{a^k |I|}{q^{k+1}} \\
    &=&  \left(q-a \right) \cdot \frac{a^k |I|}{q^{k+1}}.
    \end{eqnarray*}
  and when $\alpha \le k \le 2\alpha-1$, we have
  \begin{eqnarray*}
  \mu\left(E^{(k)} \right)%
  &=& a^{\alpha+1} b^{k-\alpha} \cdot q^{2\alpha-k-1}(q-1) \cdot \frac{|I|}{q^{2\alpha}} \\
  &=& (q-1)a \cdot \frac{b^{k-\alpha} a^{\alpha}|I|}{q^{k+1}} \\
  &=& (q-b) \cdot \frac{b^{k-\alpha} a^{\alpha}|I|}{q^{k+1}}. 
  \end{eqnarray*}
   
\end{enumerate}

Similarly, we can also plot $\mu$ on the right hand side of $Z$ as follows (see, Figure \ref{20200815Fig01}).

\begin{figure}[ht]
\begin{tikzpicture}[scale=5.5]
\draw (-.8,0) -- (1.6,0); 
\fill (-.8,0) circle [radius=.2pt];
\fill (-.8, -0.01) node [left] {{\footnotesize $\textZeta$}};
\fill [darkorchid] (-.75, 0) circle [radius=.2pt];
\fill (-.75, -0.01) node [below] {\footnotesize $\Upsilon$}; 
\fill (-.6, 0) circle [radius=.2pt]; 
\fill (-.1, 0) circle [radius=.2pt];
\fill (.5, 0) circle [radius=.2pt];
\fill (1.5, 0) circle [radius=.2pt]; 
\draw [decorate,decoration={brace,amplitude=8pt,raise=10pt},yshift=1.5pt] (-.8,0) -- (-.6,0) node [black,midway,xshift=0cm, yshift=.9cm] { {\tiny $G^{(2\alpha)}$}};
\fill (-.7, .01) node [above] {\tiny {\color{red}$b^\alpha a^\alpha$}};
\draw [decorate,decoration={brace,amplitude=8pt,raise=10pt},yshift=-3pt] (-.6, 0) -- (-.8, 0) node [black,midway,xshift=0cm, yshift=-.9cm] {{{\color{blue}\tiny $1$}}};
\fill (-.3, 0) circle [radius=.2pt]; 
\draw [decorate,decoration={brace,amplitude=8pt,raise=10pt},yshift=1.5pt] (-.6,0) -- (-.1,0) node [black,midway,xshift=0cm, yshift=.9cm] { {\tiny $F^{(2\alpha-1)}$}};
\fill (-.45, .01) node [above] {\tiny {\color{red} $b^\alpha a^\alpha$}}; 
\fill (-.2, .01) node [above] {\tiny {\color{red} $b^{\alpha+1}{a^{\alpha-1}}$}}; 
\draw [decorate,decoration={brace,amplitude=8pt,raise=10pt},yshift=-3pt] (-.1, 0) -- (-.6, 0) node [black,midway,xshift=0cm, yshift=-.9cm] {{{\color{blue}\tiny $q-1$}}};
\fill (-.45, -.01) node [below] {\tiny {\color{blue} $q-2$}}; 
\fill (-.2, -.01) node [below] {\tiny {\color{blue} $1$}}; 
\fill (.3, 0) circle [radius=.2pt];
\draw [decorate,decoration={brace,amplitude=8pt,raise=10pt},yshift=1.5pt] (-.1,0) -- (.5,0) node [black,midway,xshift=0cm, yshift=.9cm] { {\tiny $F^{(2\alpha-2)}$}};
\fill (.1, .01) node [above] {\tiny {\color{red} $b^{\alpha} a^{\alpha-1}$}}; 
\fill (.1, -0.01) node [below] {\tiny {\color{blue} $q(q-2)$}};
\fill (.4, -0.01) node [below] {\tiny {\color{blue} $q$}};
\fill (.4, .01) node [above] {\tiny {\color{red} $b^{\alpha+1}a^{\alpha-2}$}}; 
\draw [decorate,decoration={brace,amplitude=8pt,raise=10pt},yshift=-3pt] (.5, 0) -- (-.1, 0) node [black,midway,xshift=0cm, yshift=-.9cm] {{{\color{blue}\tiny $q(q-1)$}}};
\fill (1.3, 0) circle [radius=.2pt]; 
\fill (.9, -.01) node [below] {\tiny {\color{blue} $q^2(q-2)$}};
\fill (1.4, -.01) node [below] {\tiny {\color{blue} $q^2$}};
\draw [decorate,decoration={brace,amplitude=8pt,raise=10pt},yshift=-3pt] (1.5, 0) -- (.5, 0) node [black,midway,xshift=0cm, yshift=-.9cm] {{{\color{blue}\tiny $q^2(q-1)$}}};
\fill (.9, .01) node [above] {\tiny {\color{red} $b^\alpha a^{\alpha-2}$}};
\fill (1.4, .01) node [above] {\tiny {\color{red} $b^{\alpha+1} a^{\alpha-3}$}}; 
\draw [decorate,decoration={brace,amplitude=8pt,raise=10pt},yshift=1.5pt] (.5,0) -- (1.5,0) node [black,midway,xshift=0cm, yshift=.9cm] { {\tiny $F^{(2\alpha-3)}$}};
\draw (-.8,-.8) -- (1.6, -.8); 
\fill (-.8, -.8) circle [radius=.2pt];
\fill (-.8, -.81) node [left] {{\footnotesize $\textZeta$}};
\fill [darkorchid] (-.75, -.8) circle [radius=.2pt];
\fill (-.75, -0.81) node [below] {\footnotesize $\Upsilon$}; 
\fill [green] (-.2, -.8) circle [radius=.4pt]; 
\fill (-.7, -.8) circle [radius=.2pt];
\fill (-.4, -.8) circle [radius=.2pt];
\fill (-.3, -.79) node [above] {\tiny {\color{red} $b^{\alpha+1}$}};
\fill (-.55, -.79) node [above] {\tiny {\color{red} $b^\alpha a$}}; 
\draw [decorate,decoration={brace,amplitude=8pt,raise=10pt},yshift=5pt] (-.8,-.8) -- (-.2,-.8) node [black,midway,xshift=0cm, yshift=.9cm] { {\tiny $G^{(\alpha)}$}};
\draw [decorate,decoration={brace,amplitude=8pt,raise=10pt},yshift=1.3pt] (-.7,-.8) -- (-.2,-.8) node [black,midway,xshift=0cm, yshift=.9cm] { {\tiny $F^{(\alpha)}$}};
\fill (-.3, -.81) node [below] {\tiny {\color{blue} $q^{\alpha-1}$}};
\fill (-.55, -.81) node [below] {\tiny {\color{blue} $q^{\alpha-1}(q-2)$}};
\draw [decorate,decoration={brace,amplitude=8pt,raise=10pt},yshift=-3pt] (-.2, -.8) -- (-.7, -.8) node [black,midway,xshift=0cm, yshift=-.9cm] {{{\color{blue}\tiny $q^{\alpha-1}(q-1)$}}};
\fill (.6, -.8) circle [radius=.2pt]; 
\fill (1.5, -.8) circle [radius=.2pt]; 
\draw [decorate,decoration={brace,amplitude=8pt,raise=10pt},yshift=5pt] (-.2,-.8) -- (.6,-.8) node [black,midway,xshift=0cm, yshift=.9cm] { {\tiny $F^{(\alpha-1)}$}};
\draw [decorate,decoration={brace,amplitude=8pt,raise=10pt},yshift=5pt] (.6,-.8) -- (1.5,-.8) node [black,midway,xshift=0cm, yshift=.9cm] { {\tiny $F^{(\alpha-2)}$}};
\fill (.2, -.79) node [above] {\tiny {\color{red} $b^{\alpha-1}a$}};
\draw [decorate,decoration={brace,amplitude=8pt,raise=10pt},yshift=-3pt] (.6, -.8) -- (-.2, -.8) node [black,midway,xshift=0cm, yshift=-.9cm] {{{\color{blue}\tiny $q^{\alpha}(q-1)$}}};
\draw [decorate,decoration={brace,amplitude=8pt,raise=10pt},yshift=-3pt] (1.5, -.8) -- (.6, -.8) node [black,midway,xshift=0cm, yshift=-.9cm] {{{\color{blue}\tiny $q^{\alpha+1}(q-1)$}}};
\fill (1.05, -.79) node [above] {\tiny {\color{red} $b^{\alpha-2}a$}};
\end{tikzpicture}
\caption{$\mu$ on the right side of $\textZeta$.}
\label{20200815Fig01}
\end{figure}
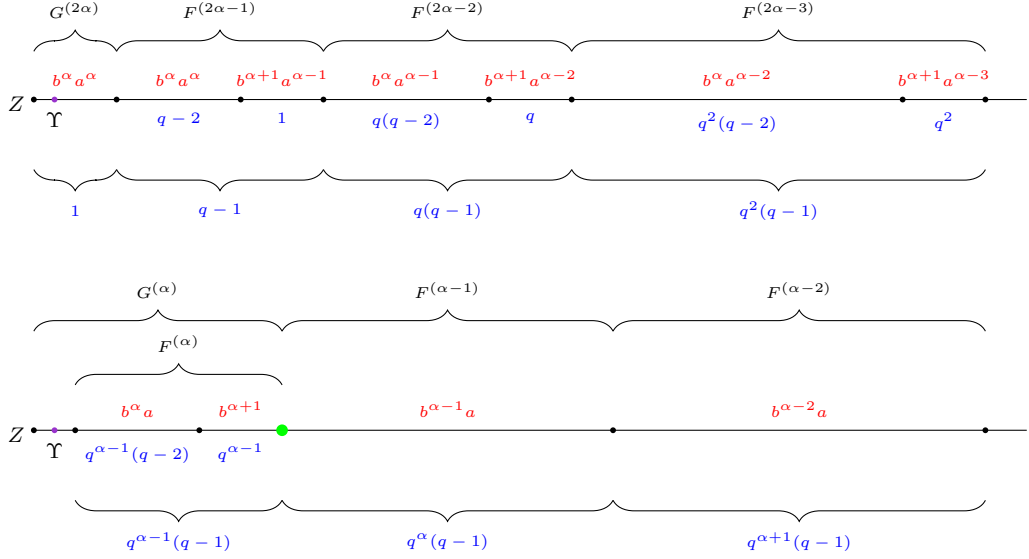

\bigskip

The building block of $\mu$ on the right side of $Z$ can be visualized as follows. 

\medskip

\begin{enumerate}
    \item [(1).] When $1 \le k \le \alpha-1$, the weights and the ratio associated to $F^{(k)}$ is given by (see, Figure \ref{20200815Fig02})

        \begin{figure}[ht]
\begin{tikzpicture}[scale=7]
\draw (0, 0)--(1, 0);
\fill (0, 0) circle [radius=.2pt];
\fill (1, 0) circle [radius=.2pt];
\fill (.5, .01) node [above] {{\color{red} $b^k a$}};
\fill (.5, -.01) node [below] {{\color{blue} $q^{2\alpha-k-1}(q-1)$}}; 
\draw [decorate,decoration={brace,amplitude=8pt,raise=10pt},yshift=2pt] (0, 0) -- (1, 0) node [black,midway,xshift=0cm, yshift=.9cm]  {$F^{(k)}$};
\end{tikzpicture}
\caption{$F^{(k)}$ with $1 \le k \le \alpha-1$.}
\label{20200815Fig02}
\end{figure}
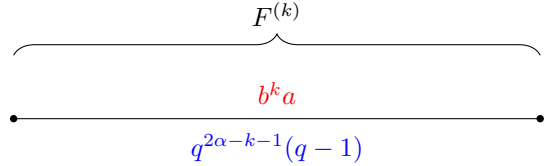

\bigskip

\item [(2).] When $\alpha \le k \le 2\alpha-1$, we have the following (see, Figure \ref{20200815Fig03})
    
    \begin{figure}[ht]
\begin{tikzpicture}[scale=7]
\draw (0, 0)--(1, 0);
\fill (0, 0) circle [radius=.2pt];
\fill (1, 0) circle [radius=.2pt];
\fill (.75, 0) circle [radius=.2pt];
\fill (.375, -.01) node [below] {{\color{blue} $q^{2\alpha-k-1}(q-2)$}};
\fill (.875, -.01) node [below] {{\color{blue} $q^{2\alpha-k-1}$}};
\draw [decorate,decoration={brace,amplitude=8pt,raise=10pt},yshift=-2pt] (1, 0) -- (0, 0) node [black,midway,xshift=0cm, yshift=-.9cm] { {{\color{blue} $q^{2\alpha-k-1}(q-1)$}}};
\fill (.375, .01) node [above] {{\color{red} $b^\alpha a^{k-\alpha+1}$}};
\fill (.875, .01) node [above] {{\color{red} $b^{\alpha+1}a^{k-\alpha}$}};
\draw [decorate,decoration={brace,amplitude=8pt,raise=10pt},yshift=2pt] (0, 0) -- (1, 0) node [black,midway,xshift=0cm, yshift=.9cm]  {$F^{(k)}$};
\end{tikzpicture}
\caption{$F^{(k)}$ with $\alpha \le k \le 2\alpha-1$.}
\label{20200815Fig03}
\end{figure}
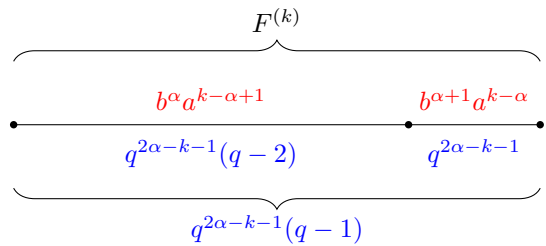
\end{enumerate}

\begin{rem}
The above figures clearly indicate that this case is a ``mirror symmetric" version of the case where $\textZeta$ is to the right of $J_p$.
\end{rem}
\subsection{The exhaustion procedure}
\label{exhaustion procedure}
Here we describe the main idea that we will apply to all the rest of the scenarios, called the \emph{exhaustion procedure}.  Upon reading this section, the reader should be equipped to check the calculations with greater ease, and also be convinced that this technique will handle all the nontrivial cases, and therefore show that our measure is $p$-adic doubling.  At its heart, this idea capitalizes on the geometric progressions inherent in this construction to bound the ratios of $J's$ $p$-adic children.

The basic idea is the following: assuming $Z$ is on the right hand side of $J$,  we will look at the rightmost child of $J$, $J_p$, which will intersect a certain number of the $E$ and $H^{(2\alpha)}$ intervals (or none of them) which we have defined.  These intervals have a nice structure, in particular they exhibit geometric grown (see, Figure \ref{20200811Fig01}), and so $\mu(J_p)$ will be (essentially) controlled by 
\begin{equation} \label{20200824eq01}
\left(\textrm{the weight on the leftmost part of} \ J_p \right) \cdot \left|J_p \right|.
\end{equation}
All other children are limited in how many other $E$ and $H^{(2\alpha)}$ intervals they can intersect, and we can ``exhaust" these children one by one by quantifying how many can lie in the next largest interval.  Once we exhaust a certain number of children, we move again to the next largest interval, and exhaust more.  Due to the geometric progression inherent in certain ratios involving either the $E$ or $H^{(2\alpha)}$ intervals, we will be able to exhaust all the children in $N$ steps, where $N$ is a fixed number depending only on $p$ and $q$ (see, \eqref{20200813eq11}). Moreover, the geometric progression also guarantees that each $J_i, 1 \le i \le p-1$ intersects at most two of the $E$ and $H$ intervals.  

As long as we can favorably compare ratios at each exhaustion, we will have upper and lower bounds that are controlled by a fixed power (no matter what $\alpha$ is) of the ratios at a single exhaustion.  Taking care of the special case in considering the ratio between $\mu(J_p)$ and $\mu(J_{p-1})$, which is calculated separately, all other children will be exhausted, resulting in the whole family being exhausted, and leading to the explicitly calculable (though by no means optimal) upper and lower bounds. 

Some details are in order.  Here, we treat a generic situation to give greater unity, there are a few small technicalities and particularities which are pointed out later when they occur.  First we notice that $N:=\floor*{\frac{\log p}{\log q}}+1$ is the smallest integer, such that
\begin{equation} \label{20200813eq11}
p<1+(q-1)+q(q-1)\dots+q^{N-1}(q-1)=q^N.
\end{equation} 
We will end up doing at most $N$ exhaustion steps, where at the $k$th exhaustion, we will at least exhaust $q^{k-1}(q-1)$ children, precisely those consecutively located to the right of the previous exhaustion.  So explicitly, for the first exhaustion, at least $q$ children are exhausted, for the second, at least $q(q-1)$, for the third, at least $q^2(q-1)$ are, and so on.  Since the total number of children is $p$, which is less than $q^N$ (see \eqref{20200813eq11}), and remember that we can count $J_p$ as exhausted already as we will handle it separately), after the $N$-th exhaustion, all the $p$-adic children will be exhausted.

At each exhaustion step, the ratio of the (to be) exhausted children to each other will be controlled by some bounded constant dependent on $a$ and $b$ (or on $q$ and $p$, but never $\alpha$), like $b/a^2$, so after $N$ steps, the worst ratio we can have between all children is that constant to the $N$th power, such as $(b/a^2)^N$.  This essentially leaves only one remaining step: to calculate the ratio 
$$
\frac{\mu(J_p)}{\mu(J_{p-1})}.
$$ 
Computationally, this full procedure takes $N+1$ steps, and more importantly, the number of steps as well as the actual ratios computed, are independent of the parameter $\alpha$.

\begin{rem}
Our approach involving the exhaustion procedure is different from \cite{BMW}, though some similar ideas are present in both works.  We developed this exhaustion procedure due to the generality that we consider, but it also allows us to unify several of the cases under one umbrella, and this guides the different way that we split up our cases.  In particular, we do not need the concept of \emph{valuable set} from \cite{BMW}.  
\end{rem}

We now work the details out. Recall that $J$ is a $p$-adic interval which coincides with one of the $p$-adic offspring of $J^\ell$ and $J \subsetneq J^\ell$. Moreover, we denote all the $p$-adic children of $J$ by
$$
\left\{J_1, \dots, J_p\right\},
$$
where each $J_i, 1 \le i \le p$ is defined as in \eqref{20200722eq02}.

\medskip

\textbf{\textit{Assumptions:}} To begin with, we make some assumptions.  

\begin{enumerate}
    \item [(1).] We assume that $Z$ is on the right hand side of $J$. Note that in this case, among all $\{J_1, \dots, J_p\}$, $J_p$ is the interval which is the closest to $Z$, and intersects the $E^{(i)}$'s and $H^{(2\alpha)}$ in the most complicated way. 
    
    \medskip
    
    \item [(2).] There exists some constant $A>0$ which is independent of $\alpha$, such that
    \begin{equation} \label{20200825eq01}
    \frac{1}{A} \le \frac{\mu(J_p)}{\mu(J_{p-1})} \le A.
    \end{equation} 
    
    \medskip
    
    \item [(3).] For simplicity, let us write
     $$
    E^{(2\alpha)}:=H^{(2\alpha)}.  
     $$ 
    
    \medskip
    
    \item [(4).] Finally, we assume 
     \begin{equation} \label{20200825eq02}
    l \left(J_p \right) \in E^{(K)}, 
    \end{equation}
    for some $K \in \{0, \dots, 2 \alpha\}$.
     \end{enumerate}

\medskip

\begin{rem}
    
    We make a remark the condition \eqref{20200825eq01}, together with condition \eqref{20200825eq02}, can be interpreted as a quantitative way to capture all the information coming from $J_p$, and from now on, it suffices for us to deal with $\{J_1, \dots J_{p-1}\}$. 
    
    Moreover, it can happen that the condition \eqref{20200825eq02} fails, that is, $l \left(J_p \right) \notin I$, and note that the exhaustion procedure in this case is indeed trivial (see, Section \ref{20200831case01}). 
\end{rem}

\begin{rem}
Let us also make some remarks for the other cases.

\begin{enumerate}
\item [(a).] If $Z \in J$, we will see this case is essentially the ``same" as the case when $Z$ is on the right hand side of $J$ since by our construction
$$
\Upsilon-\textZeta<q^{-100\alpha} |I| \ll q^{-2\alpha} |I|=\left|G^{(2\alpha)}  \right|.
$$

\medskip

\item [(b).] If $Z$ is on the left hand side of $J$, then the assumption \eqref{20200825eq01} above would be:  there exists some constant $A>0$, independent of $\alpha$, such that 
    \begin{equation} \tag{\ref{20200825eq01}$^\prime$} 
      \frac{1}{A} \le \frac{\mu(J_1)}{\mu(J_2)} \le A.
    \end{equation} 
and \eqref{20200825eq02} would be 
 \begin{equation} \tag{\ref{20200825eq02}$^\prime$} 
    F^{(2\alpha)}:=G^{(2\alpha)}. 
    \end{equation} 
Hence, the exhaustion procedure in this case be treated a ``mirror symmetric" version of the case we are considering, with respect to $Z$, in which, we shall start with $J_1$, instead of $J_p$ as in this case $J_1$ is the closest interval to $\textZeta$. 
\end{enumerate}
\end{rem}

\medskip

\textbf{\textit{Step 1:}} By \eqref{20200825eq02}, $J_p \subset H^{(k)}$. Since 
$$
\frac{\left|E^{(K-1)}\right|}{\left|H^{(K)}\right|}=q-1, 
$$
it implies that the $q-1$ $p$-adic children to the left of $J_p$, that is,
$$
J_{p-1}, \dots, J_{p-(q-1)}
$$
are either contained in $E^{(K)}$, $E^{(K-1)}$ or $E^{(K)} \cup E^{(K-1)}$ (at most one of them). Therefore, the values of 
$$
\mu(J_{p-1}), \dots, \mu(J_{p-(q-1)})
$$
are either 
$$
\left(\textrm{weights on} \ E^{(K)} \right) \cdot |J_p|, \quad \left(\textrm{weights on} \ E^{(K-1)} \right) \cdot |J_p| 
$$
or a convex combination of them. Moreover, it is not hard to see that 
$$
\frac{a^2}{b} \le \frac{\mu\left(J_{j_1}\right)}{\mu\left(J_{j_2}\right)} \le \frac{b}{a^2}, \quad j_1, j_2 \in \{p-(q-1), \dots, p\}
$$
(one may refer Figure \ref{20200811Fig01} to check this). Finally, we note that here we exhausted the rightmost $q = 1+(q-1)$ many $p$-adic children of $J$, and this corresponds to the term``$1+(q-1)$" in \eqref{20200813eq11}. 

\medskip

\textbf{\textit{Step 2:}} Now we move the next step to exhaust more $p$-adic children of $J$ from the rightmost side. Note that
Since
$$
\frac{\left|E^{(K-2)}\right|}{\left|H^{(K)}\right|}=q(q-1), \quad \textrm{and} \quad
\frac{\left|E^{(K-2)}\right|}{\left|H^{(K-1)}\right|}=q-1, 
$$
we conclude that the next $q(q-1)$ $p$-adic children next to $J_{p-q+1}$, that is,
$$
J_{p-q}, \dots, J_{p-q^2+1}
$$
will be of one of the following situations:
\begin{enumerate}
    \item [(1).] They are either contained in $E^{(K)}$, $E^{(K-1)}$ or $E^{(K)} \cup E^{(K-1)}$ (at most one of them);
    
    \medskip
    
    \item [(2).] They are either contained in $E^{(K-1)}$, $E^{(K-2)}$ or $E^{(K-1)} \cup E^{(K-2)}$ (at most one of them).
\end{enumerate}
This implies the values of 
$$
\mu \left(J_{p-q} \right), \dots \mu \left(J_{p-q^2+1} \right)
$$
will take one of the following forms: 
\begin{enumerate}
\item [(1').] Either
$$
\left(\textrm{weights on} \ E^{(K)} \right) \cdot |J_p|, \quad \left(\textrm{weights on} \ E^{(K-1)} \right) \cdot |J_p|,
$$
or a convex combination of them;

\medskip

\item [(2').] Either 
$$
\left(\textrm{weights on} \ E^{(K-1)} \right) \cdot |J_p|, \quad \left(\textrm{weights on} \ E^{(K-2)} \right) \cdot |J_p|,
$$
or a convex combination of them.
\end{enumerate}

Moreover, it still holds that 
$$
\frac{a^2}{b} \le \frac{\mu\left(J_{j_1}\right)}{\mu\left(J_{j_2}\right)} \le \frac{b}{a^2}, \quad j_1, j_2 \in \{p-q^2+1, \dots, p-q\}
$$
To this end, we note that in this step we exhausted the rightmost $q(q-1)$ many $p$-adic children of $J$ next to $J_{p-q+1}$, and this corresponds to the term ``$q(q-1)$" in \eqref{20200813eq11}. 

\medskip

\textbf{\textit{Step k:}} In general, assume we have already made such an exhaustion $k-1$ times, and here is how we make the $k$-th exhaustion. Observe that
$$
\frac{\left|E^{(K-k)}\right|}{\left|H^{(K)}\right|}=q^{k-1}(q-1), \quad  \frac{\left|E^{(K-k)}\right|}{\left|H^{(K-1)}\right|}=q^{k-2}(q-1), \quad  \dots, \quad  \frac{\left|E^{(K-k)}\right|}{\left|H^{(K-k+1)}\right|}=q-1.
$$
Similarly, these allows us to conclude that 
$$
\mu\left(J_{p-q^{k-1}}\right), \dots, \mu\left(J_{p-q^k+1} \right)
$$
will be of one of the following situations:
\begin{enumerate}
    \item [(1).]  Either
$$
\left(\textrm{weights on} \ E^{(K_1)} \right) \cdot |J_p|, \quad \left(\textrm{weights on} \ E^{(K_1-1)} \right) \cdot |J_p|,
$$
or a convex combination of them;

\medskip

$\vdots$

\medskip

\item [(k).] Either
$$
\left(\textrm{weights on} \ E^{(K_1-k+1)} \right) \cdot |J_p|, \quad \left(\textrm{weights on} \ E^{(K_1-k)} \right) \cdot |J_p|,
$$
or a convex combination of them.
\end{enumerate}
Most importantly, the estimate 
$$
\frac{a^2}{b} \le \frac{\mu\left(J_{j_1}\right)}{\mu\left(J_{j_2}\right)} \le \frac{b}{a^2}, \quad j_1, j_2 \in \{p-q^k+1, \dots, p-q^{k-1}\}
$$
still holds, and just as before, we have exhausted the rightmost $q^{k-1}(q-1)$ many $p$-adic children of $J$ next to $J_{p-q^{k-1}+1}$, and this corresponds to the term ``$q^{k-1}(q-1)$" in \eqref{20200813eq11}. 

\medskip

\textbf{\textit{Step N:}} Continuing this process and \eqref{20200813eq11} suggests that this process will stop after $N$ steps, that is, all the $p$-adic children of $J$ will be exhausted after $N$ steps. Recall that $N$ only depends on $p$ and $q$. This suggests  \eqref{20200809eq02} holds with the absolute constant $C=\left(\frac{b}{a^2} \right)^{N}$, that is, 
\begin{equation} \label{20200814eq01}
\left(\frac{a^2}{b} \right)^{N} \le \frac{\mu(J_{j_1})}{\mu(J_{j_2})} \le \left(\frac{b}{a^2} \right)^{N}, \quad j_1, j_2 \in \{1, \dots, p-1\}.
\end{equation} 

\medskip

\textbf{\textit{Final Step:}} The final step would be adding $J_p$ to \eqref{20200814eq01}, via our assumption \eqref{20200825eq01}. This is straightforward by both estimates, and finally we conclude that
$$
\frac{1}{A} \cdot \left(\frac{a^2}{b} \right)^{N} \le  \frac{\mu(J_{j_1})}{\mu(J_{j_2})} \le A \cdot \left(\frac{b}{a^2} \right)^{N}, \quad j_1, j_2 \in \{1, \dots, p\}.
$$

\bigskip

Therefore, we see that the exhaustion procedure reduces the original problem to the computation of the ratio $\frac{\mu(J_{j_p})}{\mu(J_{j_{p-1}})}$.

\medskip
\section{The analysis part III: Computation of $\frac{\mu(J_p)}{\mu(J_{p-1})}$}
\label{padic2}
In this section, we complete the proof of $\mu$ is $p$-adic doubling by showing that the constant $A$ in our assumption \eqref{20200825eq01} can be chosen only depending on $p$, $q$, $a$ and $b$. Without loss of generality, we may assume 
$$
q>2.
$$
The case when $q=2$ is indeed much more easier and follows from an easy modification of the case when $q>2$, and we would like to leave the details to the interested reader. 

We start with computing the ratio
\begin{equation} \label{20200826eq01}
\frac{\mu(J_p)}{\mu(J_{p-1})}
\end{equation} 
with the assumption when $Z$ is on the right hand side of $J$ and $Z \subsetneq J^\ell$. 

Recall that in the case when $J_p \subset I$, $K \in \{0, \dots, 2\alpha\}$ is the integer such that
$$
l(J_p) \in E^{(K)}.
$$
Moreover, we assume that $K' \in \{0, \dots, 2\alpha\}$ with $0 \le K+K' \le 2\alpha$ is the integer such that
$$
r(J_p) \in E^{(K')}. 
$$
We make a comment that the ratio \eqref{20200826eq01}, or in other words, the constant $A$ defined in assumption \eqref{20200825eq01}, is \emph{independent} of the choice of $K$ and $K'$. 

\subsection{Computation of $\frac{\mu(J_p)}{\mu(J_{p-1})}$ if $K$ does not exist.} \label{20200831case01}

First, we consider the case when $K$ does not exist, that is, $l(J_p) \notin I$. Note that the exhaustion procedure in this case is indeed trivial: all the $J_1, \dots, J_{p-1}$ do not intersect $I$ and hence 
$$
\mu (J_j)=|J_p|, \quad j=1, \dots, p-1. 
$$
There are several cases for $K'$.

\medskip

\subsubsection{$K'$ does not exist.} In this case, $J \cap I=\emptyset$, in particular, the ratio in \eqref{20200826eq01} takes the value $1$.

\medskip

\subsubsection{$K'=0$} In this case, the weight on $J_p$ is either $1$ or $a$, therefore, we have
$$
a|J_p| \le \mu(J_p) \le |J_p|, 
$$
and hence
$$
a\le \frac{\mu(J_p)}{\mu(J_{p-1})} \le 1. 
$$
In particular, one may pick $A=\frac{1}{a}$ in this case.

\medskip

\subsubsection{$K' \ge 1$.} Note that in this case, $E^{(0)} \subseteq J_p$, and we can always bound $\mu(J_p)$ from above as follows
\begin{eqnarray*}
\mu(J_p)%
&=& \mu \left( \left[l(J_p), l(I) \right] \right)+ \mu\left( \left[l(I), r(J_p) \right] \right) \\
&\le& |J_p|+\mu(I) = |J_p|+|I| \\
&\le& |J_p|+\frac{q}{q-2}\left|E^{(0)}\right|  \le |J_p|+\frac{q}{q-2}\left|J_p \right| \\
&=& \frac{2q-2}{q-2}|J_p|. 
\end{eqnarray*}

While for the lower bound, let us consider two different sub-cases.  
\begin{enumerate}

\item [$\bullet$] If $\left|\left[l(J_p), l(I) \right] \right| \le \frac{1}{2}|J_p|$, then $$
\mu(J_p) \ge \mu \left( \left[l(J_p), l(I) \right] \right)=\left|\left[l(J_p), l(I) \right] \right|>\frac{1}{2}|J_p|.
$$

\item [$\bullet$]  If $\left|\left[l(J_p), l(I) \right] \right| <\frac{1}{2}|J_p|$, then
\begin{eqnarray*}
\mu(J_p)%
&\ge& \mu \left(E^{(0)} \right) = a \left|E^{(0)} \right| \\
&=& \frac{a(q-2)}{q} |I| \ge \frac{a(q-2)}{q} \left| \left[l(I), r(J_p) \right] \right| \\
&\ge& \frac{a(q-2)}{2q} |J_p|.
\end{eqnarray*}

\end{enumerate}

Therefore, in this case we have
$$
\min \left\{  \frac{|J_p|}{2}, \frac{a(q-2)|J_p|}{2q} \right\} \le \mu(J_p) \le \frac{(2q-2)|J_p|}{q-2}, 
$$
and we can pick $A$ accordingly. 

\medskip

\subsection{Computation of $\frac{\mu(J_p)}{\mu(J_{p-1})}$ if $K$ exists.}

Note that if $K$ exists, then $K'$ also exists. Let us consider several possibilities. 

\subsubsection{$K'=0$.} \label{20200826sec01}

In this case, we have $J_p \subset E^{(K)}$. There are three possibilities in this situation:

\begin{enumerate}

\item [$(a)$.] If $\alpha \le K \le 2\alpha$, using Figure \ref{20200811Fig03}, we have $\mu(J_p)=a^{\alpha+1}b^{K-\alpha} |J_p|$. Moreover, we also have
$$
a^{\alpha+1}b^{K-\alpha-1}|J_p| \le \mu(J_{p-1}) \le  a^{\alpha+1}b^{K-\alpha}|J_p|, \quad \textrm{if} \ K>\alpha
$$
and
$$
a^{\alpha+1}|J_p| \le \mu\left(J_{p-1} \right) \le  a^{\alpha}|J_p|, \quad \textrm{if} \ K=\alpha. 
$$
Hence, we have
$$
a \le \frac{\mu(J_p)}{\mu(J_{p-1})} \le b
$$
and we can put $A=\max\left\{b, \frac{1}{a} \right\}$ in this case.

\medskip

\item [$(b)$.]  If $1 \le K \le \alpha-1$, using Figure \ref{20200811Fig02}, we have
$$
a^{K+1}|J_p| \le \mu(J_p) \le ba^{K}|J_p|.
$$
Moreover, we also have
$$
a^{K+1} |J_p| \le \mu(J_{p-1}) \le ba^K |J_p|. 
$$
Hence, we have 
$$
\frac{a}{b} \le \frac{\mu(J_p)}{\mu(J_{p-1})} \le \frac{b}{a}
$$
and we can let $A=\frac{b}{a}$ in this case. 

\medskip

\item [$(c)$.] If $K=0$, then $\mu(J_p)=a|J_p|$. Indeed, it is easy to see that in this case
$$
a|J_p| \le  \mu(J_{p-1}) \le |J_p|, 
$$
and hence
$$
a \le \frac{\mu(J_p)}{\mu(J_{p-1})} \le 1,
$$
which suggests that one can take $A=\frac{1}{a}$ in this case. 
\end{enumerate}

\medskip

\subsubsection{$K'=1$.} This case is very similar to the case in Section \ref{20200826sec01}, and the only difference here is the estimate of $\mu(J_p)$.  Note that in this case, $0 \le K<2\alpha$ since $K'=1$. There are again three possibilities in this situation:
\begin{enumerate}
    \item [(a).] If $K=2\alpha-1$, then using Figure \ref{20200811Fig01}, we have
    $$
    a^{\alpha+1} b^{\alpha-1} |J_p| \le \mu(J_p) \le a^\alpha b^\alpha |J_p|.
    $$
    While the estimate of $\mu(J_{p-1})$ is similar as before, namely, we have
    $$
    a^{\alpha+1} b^{\alpha-2} |J_p| \le \mu(J_{p-1}) \le a^{\alpha+1}b^{\alpha-1} |J_p|,
    $$
    and therefore
    $$
    1 \le \frac{\mu(J_p)}{\mu(J_{p-1})} \le \frac{b^2}{a},
    $$
    where we can put $A=\frac{b^2}{a}$ in this case;  
    
    \medskip
    
    \item [(b).] If $\alpha \le K \le 2\alpha-2$, then then using Figure \ref{20200811Fig03}, we have
    $$
    a^{\alpha+1}b^{K-\alpha}|J_p| \le \mu(J_p) \le a^{\alpha+1}b^{K+1-\alpha}|J_p|
    $$
    and
    $$
    a^{\alpha+1}b^{K-\alpha-1}|J_p| \le \mu(J_{p-1}) \le  a^{\alpha+1}b^{K-\alpha}|J_p|.
    $$
    Therefore,
    $$
    1 \le \frac{\mu(J_p)}{\mu(J_{p-1})} \le b^2,
    $$
    which suggests we can put $A=b^2$ in this case; 
    
    \medskip
  
    \item [(c).] If $0 \le K \le \alpha-1$, then using Figure \ref{20200811Fig01} and Figure \ref{20200811Fig02}, we have
    $$
    \begin{cases}
   a^{K+2} |J_p| \le \mu(J_p) \le ba^{K}|J_p|,, \quad \hfill K \ge 1; \\
\medskip \\
a^2 |J_p| \le \mu(J_p) \le ab |J_p|, \quad \hfill K=0, 
\end{cases} 
    $$
    and
    \begin{equation} \label{20200826eq21}
\begin{cases}
a^{K+1} |J_p| \le \mu(J_{p-1}) \le ba^K |J_p|, \quad \hfill K \ge 1; \\
\medskip \\
a|J_p| \le \mu(J_{p-1}) \le |J_p|, \quad \hfill K=0. 
\end{cases} 
    \end{equation} 
    This means that in this case we have
    $$
    \frac{a^2}{b} \le \frac{\mu(J_p)}{\mu(J_{p-1})} \le \frac{b}{a}
    $$
    and with $A$ being $\frac{b}{a^2}$ in this case. 
    
\end{enumerate}

\medskip

\subsubsection{$K'>1$. } In this case, we have 
$$
E^{(K+1)} \subset J_p \subset H^{(K)},
$$
where we identify $H^{(0)}:=\bigcup\limits_{k=0}^{2\alpha} E^{(k)}$. Note that 
$$
\frac{\left|E^{(K+1)}\right|}{\left|H^{(K)}\right|}=\frac{q-1}{q^2},
$$
which implies 
\begin{equation} \label{20200826eq05}
\frac{\left|E^{(K+1)}\right|}{\left|J_p\right|} \ge \frac{q-1}{q^2}. 
\end{equation}
Moreover, since $K’ \ge 2$, the defining condition on $K'$ implies $0 \le K \le 2\alpha-2$. There are again several cases. 

\begin{enumerate}
    \item [(a).] If $1 \le K+1 \le \alpha-1$, using \eqref{20200812eq01}, \eqref{20200812eq02} and \eqref{20200826eq05}, we can bound $\mu(J_p)$ from below as follows,
    \begin{eqnarray*}
    \mu \left(J_p\right)%
    &\ge& \mu \left( E^{(K+1)} \right)=(q-a) \cdot \frac{a^{K+1}|I|}{q^{K+2}} \\
    &=& \frac{(q-a)a^{K+1}}{q-1} \cdot \frac{q-1}{q^{K+2}} \cdot |I| = \frac{(q-a)a^{K+1}}{q-1} \cdot \left| E^{(K+1)} \right|\\
    & \ge& \frac{(q-a)a^{K+1}}{q-1} \cdot \frac{q-1}{q^2} |J_p| = \frac{(q-a)a^{K+1}}{q^2}  |J_p|.
    \end{eqnarray*}
    While for the upper bound, we have if $K>1$, 
    \begin{eqnarray} \label{20200827eq01}
    \mu(J_p)%
    &\le& \mu \left(H^{(K)} \right)=\frac{a^{K}|I|}{q^{K}} = \frac{a^{K}}{q^{K}} \cdot \frac{q^{K+2}}{q-1} \cdot \frac{q-1}{q^{K+2}} \cdot |I| \nonumber \\ 
    &=& \frac{a^{K} \cdot q^2}{q-1} \cdot \left| E^{(K+1)} \right| \le \frac{q^2 a^{K} }{q-1} \cdot \left| J_p \right|;
    \end{eqnarray}
    and if $K=0$, 
    \begin{eqnarray*}
    \mu(J_p)%
    &\le& \mu \left(H^{(0)} \right)=\frac{a(q-1)|I|}{q}=aq \cdot \frac{q-1}{q^2} |I| \\
    &=& aq \cdot \left|E^{(1)} \right| \le aq \cdot |J_p|. 
    \end{eqnarray*}
    The estimate for $\mu(J_p)$ in this case is exactly the same as \eqref{20200826eq21}, and hence
    $$
    \frac{(q-a)a}{bq^2} \le \frac{\mu(J_p)}{\mu(J_{p-1})} \le \frac{q^2}{a(q-1)}, 
    $$
    which implies that we can put
    $$
    A= \frac{bq^2}{a(q-a)}
    $$
    in this case. 
    \medskip
    
    \item [(b).] If $\alpha \le K+1 \le 2\alpha-1$, we can bound $\mu(J_p)$ from below by
    \begin{eqnarray*}
    \mu(J_p)%
    &\ge& \mu\left( E^{(K+1)} \right)=(q-b) \cdot \frac{b^{K+1-\alpha}a^\alpha |I|}{q^{K+2}} \\ 
    &=& \frac{(q-b) \cdot b^{K+1-\alpha}a^{\alpha}}{q-1} \cdot \frac{q-1}{q^{K+2}} \cdot |I| \\ 
    &=& \frac{(q-b) \cdot b^{K+1-\alpha}a^{\alpha}}{q-1} \cdot \left| E^{(K+1)} \right| \\
    &\ge& \frac{(q-b) \cdot b^{K+1-\alpha}a^{\alpha}}{q-1} \cdot \frac{q-1}{q^2} |J_p| \\
    &=& \frac{(q-b) \cdot b^{K+1-\alpha}a^{\alpha}}{q^2} \cdot |J_p|.
    \end{eqnarray*}

    While for the upper bound, we need to consider two sub-cases. If $K=\alpha-1$, then following the same argument in \eqref{20200827eq01}, we have 
    $$
    \mu(J_p) \le \frac{q^2 a^{\alpha-1}}{q-1} \cdot |J_p|.
    $$
    If $K \ge \alpha$, then 
    \begin{eqnarray*}
    \mu(J_p)%
    &\le& \mu \left( H^{(K)} \right)=\frac{a^\alpha b^{K-\alpha}|I|}{q^{K}} = \frac{a^\alpha b^{K-\alpha}}{q^{K}} \cdot \frac{q^{K+2}}{q-1} \cdot \frac{q-1}{q^{K+2}} \cdot |I| \\
    &=& \frac{a^\alpha b^{K-\alpha}}{q^{K}} \cdot \frac{q^{K+2}}{q-1} \cdot \left| E^{(K+1)} \right|\le \frac{q^2 a^\alpha b^{K-\alpha}}{q-1} |J_p|.
    \end{eqnarray*}
While for the estimate of $\mu(J_{p-1})$, we have
$$
\begin{cases}
a^{\alpha} |J_p| \le \mu(J_{p-1}) \le ba^{\alpha-1}|J_p|, \quad \hfill K=\alpha-1; \\
\medskip \\
a^{\alpha+1} b^{K-\alpha-1} |J_p| \le \mu(J_{p-1}) \le a^{\alpha+1} b^{K-\alpha} |J_p|, \quad \hfill K \ge \alpha.
\end{cases}
$$
All these estimates yield
$$
\frac{(q-b)a}{bq^2} \le \frac{\mu(J_p)}{\mu(J_{p-1})} \le \frac{bq^2}{a(q-1)}
$$
and 
$$
A=\frac{bq^2}{a(q-1)}
$$
in this case. 
\end{enumerate}

\medskip

Therefore, combining all the estimate of $A$, together with the exhaustion procedure, the proof for the case when $Z$ is on the right hand side of $J$ is complete. Now we turn to the other cases. 

\section{The analysis part IV: Other cases and the proof of Theorem \ref{main result 2}} 
\label{padic3}
In this section, we make some comments on how to adapt the exhaustion procedure to deal with the other two cases when 
\begin{enumerate}
    \item [(1).] $Z$ is on the left hand side of $J$;
    \item [(2).] $Z \in J$. 
\end{enumerate}
This allows us to conclude the measure $\mu$ construction in Section \ref{20200809sec01} is $p$-adic doubling, which proves Theorem \ref{main result}. Finally, we will extend our result to any finite collection of primes.

\subsection{$Z$ in on the left hand side of $J$.}  \label{20200815Sec01} Recall that $\Upsilon$ is on the right of $Z$. First we note that if $\Upsilon \in J$, then $\Upsilon$ has to be either $l(J)$ or $r(J)$, otherwise this will force $J$ to be $J^\ell$ or some $p$-adic ancestor of $J^\ell$, which contradicts the our assumption $J \subsetneq J^\ell$. Therefore, $\Upsilon$ is either located on the right hand side of $\Upsilon$ or on the left. 

The case when $\Upsilon$ is on the right hand side of $J$ is trivial. Since $J \subset \left[\textZeta, \Upsilon \right] \subset G^{(2\alpha)}$, and \eqref{20200809eq02} holds with the constant $1$ (since the weight on $G^{(2\alpha)}$ is $a^\alpha b^\alpha$). 

Hence, we may assume $\Upsilon$ is on the left hand side of $J$. However, by \eqref{20200722eq03}, we know that $\Upsilon-\textZeta \le \frac{|I|}{q^{100\alpha}}$, which is ``negligible" compared to the length of $G^{(2\alpha)}$, which is $\frac{|I|}{q^{2\alpha}}$. In other words, this motivates us to treat $\Upsilon$ and $\textZeta$ ``the same" under such a situation. Therefore, the proof of this case follows from an easy modification of the arguments presented for the case when $Z$ is on the right hand side of $J$ and we would like to leave the detail to the interested reader.

\subsection{$Z \in J$.}  The last case is also an application of the exhaustion procedure in Section \ref{exhaustion procedure}. To begin with, we note that since $Z \in J$, $\Upsilon$ is forced to located on the right hand side of $J$. Otherwise, $\Upsilon$ will be an interior point of $J$, and following the argument in the first paragraph in Section \ref{20200815Sec01},  this contradicts the assumption $J \subsetneq J^{\ell}$. 

We consider several possibilities.

\begin{enumerate}

\item [(1).] $l\left(H^{(2\alpha)} \right) \notin J_p$. Since $Z \in J$, it follows that $r(J)=r(J_p)>l\left(H^{(2\alpha)} \right)$. Let $K \in \{1, \dots, 2\alpha-1\}$ be the unique integer such that $ l\left(H^{(2\alpha)} \right) \in J_K$ (if such a $K$ does not exist, then $J \subset H^{(2\alpha)} \cap G^{(2\alpha)}$ and \eqref{20200809eq02} holds trivially with the constant $1$, since the weight on both $H^{(2\alpha)}$ and $G^{(2\alpha)}$ is $a^\alpha b^\alpha$). This means we have
$$
\mu \left(J_{K+1} \right)=\dots=\mu \left(J_p \right)=a^\alpha b^{\alpha} |J_p|, 
$$
while the estimate of $\mu(J_1), \dots, \mu(J_K)$ follows exactly the same as the exhaustion procedure, and \eqref{20200814eq01} holds in this case. 

\medskip
\item [(2).] $l\left(H^{(2\alpha)} \right) \in J_p$. To begin with, we first note that by an application of the exhaustion procedure, we can show that there exists an absolute constant $C'>0$, such that
$$
\frac{1}{C'} \le \frac{\mu(J_{j_1})}{\mu(J_{j_2})} \le C', \quad j_1, j_2 \in \{1, \dots, p-1\},
$$
since $Z$ is on the right of $J_{p-1}$. Therefore, it suffices to show that there exists an absolute constant $C''>0$, such that
$$
\frac{1}{C''} \le \frac{\mu(J_p)}{\mu(J_{p-1})} \le C''. 
$$
This will follows from an easy argument by examining whether the ratio
$$
\frac{\left| \left[l(J_p), l\left(H^{(2\alpha)} \right) \right] \right|}{|J_p|}
$$
makes a significant contribution (for example, whether it is greater than $1/2$) or not. We would like to leave the detail to the interested reader. 
\end{enumerate}

\begin{rem}
We note that this is the key part where Step $2\alpha$ is needed, as by stopping at Step $\alpha$, for very small $J_p$ lying almost entirely to the right of $\textZeta$, the ratio $|J_p|/|J_{p-1}|$ would be essentially $(b/a)^\alpha$.
\end{rem}

\medskip

\subsection{Proof of Theorem \ref{main result 2}.} To this end, we prove Theorem \ref{main result 2}. Recall that $\{p_1, \dots, p_M\}$ is a given finite collection of primes. Without loss of generality, we may assume $p_1$ is the smallest, otherwise, we just re-label all these numbers. 

The idea to prove the much more general result Theorem \ref{main result 2} is to apply our previous argument to each of the pairs
$$
(p_i, p_1), \quad  i=2, \dots, M,
$$
where $p_1$ plays the role of $q$ and $p_i$ plays the role of $p$. In particular, the analysis part remains unchanged. 

The only difference is the number theory part, and we can modify the argument as follows. Let 
$$
C(p_i, p_1), \  m(p_i, p_1), \quad i=2, \dots, M
$$
be the integers defined as in Proposition \ref{120200724prop01}. Let us define
$$
C\left( \{p_i\}_2^M, p_1 \right):= \max_{i=2, \dots, M} C(p_i, p_1) 
$$
and 
$$
m\left( \{p_i\}_2^M, p_1 \right):= \max_{i=2, \dots, M} m(p_i, p_1).
$$
Here are the modification we need. 

\begin{manualtheorem}{\ref{20200722thm01}'}\label{20200827thm01}
There exists a collection of $p_1$-adic intervals $\{I_\ell^{\alpha_\ell}\}_{\ell \ge 1}$ on $[0, 1)$, where $\alpha_\ell \ge 1$ is a positive integer associated to $\ell$, such that
\begin{enumerate}
    \item [(1).] For each $i \in \{2, \dots, M\}$, the collection of $p_i$-adic intervals $\{J^\ell_i\}_{\ell \ge 1}$ is pairwise disjoint and contained in $[0, 1)$, where $J_i^\ell$ is the smallest $p_i$-adic interval that contains $I_\ell^{\alpha_\ell}$. In particular, the collection $\{I_\ell^{\alpha_\ell}\}_{\ell \ge 1}$ is also pairwise disjoint;
    
    \medskip

    \item [(2).] For each $\alpha \ge 1, \alpha \in \N$, there are only finitely many $\ell \ge 1$, such that $\alpha_\ell=\alpha$. 
    
    \medskip
    
    \item [(3).] For each $\ell \ge 1$ and $i \in \{2, \dots, M\}$, 
    \begin{equation} \label{20200722eq03}
    0<\Upsilon\left(J_i^\ell \right)-\textZeta\left(I_\ell^{\alpha_\ell} \right) \le q^{-100\alpha_\ell} \left| I_\ell^{\alpha_\ell} \right|. 
    \end{equation}
\end{enumerate}
\end{manualtheorem}

\begin{manualproposition}{\ref{20200805prop01}'} \label{20200827prop01}
Given any interval $\widetilde{J} \subset [0, 1]$ ($\widetilde{J}$ is not necessarily $p_i$-adic, for any $i=2, \dots, M$) and any $\varepsilon>0$, there exists a $p_1$-adic interval $I \subset \widetilde{J}$ such that
$$
0<\Upsilon(J_i)-Z(I) \le \varepsilon |I|, \quad i=2, \dots, M,
$$
where $J_i$ is the smallest $p_i$-adic interval that contains $I$. 
\end{manualproposition}

The proof of Proposition \ref{20200827prop01} follows an easy modification of the one of Proposition \ref{20200805prop01} by replacing $C(p, q)$ by $C\left( \{p_i\}_2^M, p_1 \right)$, and $m(p, q)$  by $m\left( \{p_i\}_2^M, p_1 \right)$, respectively (Note that with such a modification, the counterpart of Proposition \ref{Prop arithmetic progressions} holds automatically). Moreover, for the proof of Theorem \ref{20200827thm01}, we may start with taking an infinite collection of $\{\widetilde{j}^\ell\}_{\ell \ge 1}$ of pairwise disjoint $\left(\prod_{i=2}^M p_i\right)$-adic sub-intervals on $[0, 1]$, and the rest are the same as those in Theorem \ref{20200722thm01}. We would like to leave the details to the interested reader.

\bigskip

\section{Applications}
\label{applications}

We now use our results to show an application related to the reverse H\"older inequality, mentioned in the introduction.  Though well equipped to do so by our earlier analysis, including the exhaustion procedure, the proofs require significant care.  To provide clarity, we explain our reasoning within the proofs before providing the detailed calculations.  We begin with some definitions.

Let $p$ and $q$ be a pair of primes with $p>q$ and $w$ be a weight (that is, a nonnegative locally integrable function). We may also assume $q>2$ as before, while the case $q=2$ follows from an easy modification of the proof for $q>2$.

Let further, $w_\mu$ be the weight associated to the measure $\mu$ that we have constructed in Section \ref{20200809sec01}, that is
$$
\mu(I)=\int_I w_\mu dx, \quad \textrm{for any interval} \ I. 
$$

Define the \emph{reverse H\"older and q-adic reverse H\"older classes} as follows:
\begin{defn}
Let $r>1$. We say that $w\in RH_r$ if 
\begin{equation}
\label{20200829eq01}
\left(\fint_I w^r \right)^{\frac{1}{r}} \leq C \fint_I w 
\end{equation}
for all intervals $I$, where $C$ is an absolute constant. Moreover, we say $w \in RH_1$ if $w \in RH_r$ for some $r>1$, that is
$$
RH_1:=\bigcup_{r>1} RH_r. 
$$
\end{defn}
\begin{defn}
Let $r>1$. We say that $w\in RH_r^q$ if
\begin{equation} \label{20200829eq02}
\left(\fint_Q w^r \right)^{\frac{1}{r}} \leq C \fint_Q w
\end{equation} 
for all $q$-adic intervals $Q$, where $C$ is an absolute constant and $w$ is $q$-adic doubling. Moreover, we say $w \in RH^q_1$ if $w \in RH_r^q$ for some $r>1$, namely
$$
RH^q_1:=\bigcup_{r>1} RH_r^q. 
$$
\end{defn}

Note that it is well-known that any $RH_r$ weight is doubling, but a weight which satisfies \eqref{20200829eq02} is not necessarily $q$-adic doubling (see, for example \cite{LPW}).  This is why the $q$-adic doubling assumption is added to the second definition.  

The study of these weights has been extensive, more information and some recent applications can be found in \cite{CN}, \cite{P2}, \cite{PWX}, \cite{LPW}, \cite{HPR}, \cite{K et al}, \cite{AW}  among many others.   There is also interesting complimentary work done in \cite{DCU -1}.  Reverse H\"older weights are relevant in the theory of quasiconformal maps, which was the original motivation for their study \cite{G}. 

Consider the $w = w_\mu$ from our construction.  Since this $w_\mu$ is not doubling, then it does not satisfy \eqref{20200829eq01}, namely $w_\mu \notin RH_r$.  

\begin{prop} \label{20200831prop02}
The weight $w_\mu \in RH_1^q$.
\end{prop}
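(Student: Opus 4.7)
The plan is to fix $r \in (1, \log q/\log b)$ and verify \eqref{20200829eq02} with a constant depending only on $p,q,a,b,r$; this interval is nonempty because $b = q-(q-1)a < q$, so $\log q/\log b > 1$. Since the $q$-adic doubling of $\mu$ has already been established, it suffices to control the reverse H\"older ratio uniformly over all $q$-adic intervals $Q$.

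I would stratify $Q$ according to its position relative to the disjoint family $\{I_\ell^{\alpha_\ell}\}_{\ell \ge 1}$. If $Q$ is disjoint from every $I_\ell^{\alpha_\ell}$, then $w_\mu \equiv 1$ on $Q$ and \eqref{20200829eq02} is trivial. If $Q$ strictly contains one or more $I_\ell^{\alpha_\ell}$, then $\mu(I_\ell^{\alpha_\ell}) = |I_\ell^{\alpha_\ell}|$ (cf.\ \eqref{20200810eq01}) forces $\fint_Q w_\mu = 1$, and an averaging argument reduces the problem to the uniform bound $\fint_{I_\ell^{\alpha_\ell}} w_\mu^r \le C$, which follows from the third case. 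The substantive case is $Q \subseteq I_\ell^{\alpha_\ell}$; writing $I := I_\ell^{\alpha_\ell}$ and $\alpha := \alpha_\ell$, the analysis of Section \ref{padic1} shows that any $q$-adic $Q \subseteq I$ is either contained in a $q$-adic piece of constant density (where \eqref{20200829eq02} is trivial), coincides with $H^{(k)}$ or $G^{(k)}$ for some $k \in \{1,\ldots,2\alpha-1\}$, or equals $I$ itself.

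The crucial step is to bound $\tilde M_k(r) := \fint_{H^{(k)}} w_\mu^r / \bigl(\fint_{H^{(k)}} w_\mu\bigr)^r$ and the analogous $G$-side quantities uniformly in $k$ and $\alpha$. Using the self-similarity $H^{(k)} = E^{(k)} \sqcup H^{(k+1)}$ together with the weight data from Section \ref{20200809sec01}, I would derive
\[
\tilde M_k(r) = \frac{b^r + (q-2)a^r}{q} + \frac{a^r}{q}\,\tilde M_{k+1}(r), \qquad 1 \le k \le \alpha-1,
\]
\[
\tilde M_k(r) = \frac{(q-1)a^r}{q} + \frac{b^r}{q}\,\tilde M_{k+1}(r), \qquad \alpha \le k \le 2\alpha-1,
\]
with terminal value $\tilde M_{2\alpha}(r) = 1$. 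In the upper recursion, the multiplier $B := b^r/q$ is strictly less than $1$ by our choice of $r$, so telescoping gives $\tilde M_k(r) \le (q-1)a^r/(q(1-B)) + 1$ for $\alpha \le k \le 2\alpha$, independent of $\alpha$. In the lower recursion, the contraction factor $a^r/q < 1$, so iterating down from the bounded base value $\tilde M_\alpha(r)$ yields another $\alpha$-independent bound. The $G$-side ratios satisfy analogous recursions with the roles of $a$ and $b$ appropriately swapped (cf.\ Figures \ref{20200815Fig02}--\ref{20200815Fig03}), and the case $Q = I$ is covered by the identity $\fint_I w_\mu^r = q^{-1}\bigl[(q-2)a^r + \fint_{H^{(1)}} w_\mu^r + \fint_{G^{(1)}} w_\mu^r\bigr]$.

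The main obstacle I anticipate is the $\alpha$-uniformity of the base estimate, since $\alpha_\ell$ ranges over an unbounded set of positive integers by Theorem \ref{20200722thm01}, (2). This is precisely what the constraint $b^r < q$ addresses: a geometric sum with ratio $B \ge 1$ would blow up with $\alpha$ and destroy uniformity, whereas $B<1$ bounds the sum by $1/(1-B)$ uniformly in $\alpha$. With this careful choice of $r$, all bounds consolidate into a single constant $C = C(p,q,a,b,r)$, and \eqref{20200829eq02} follows for all $q$-adic $Q$.
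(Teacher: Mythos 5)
Your proposal is correct and takes essentially the same approach as the paper: the same choice of $r \in \bigl(1, \tfrac{\ln q}{\ln b}\bigr)$, the same stratification of $q$-adic intervals (disjoint from all $I_\ell^{\alpha_\ell}$, properly containing some, or contained in one, with only $I$, $H^{(k)}$, $G^{(k)}$ nontrivial), and the same $\alpha$-uniform bounds coming from geometric decay with ratios $b^r/q$ and $a^r/q$ strictly less than $1$. The only difference is presentational: your two-regime recursion for $\tilde M_k(r)$ unrolls to exactly the explicit sums over the $E^{(j)}$ and $F^{(j)}$ pieces that the paper computes in its Cases I--V and general case.
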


\begin{proof}
It suffices to show that $w \in RH_r^q$ for some $r>1$. Let us fix an $r$ with 
$$
1<r<\frac{\ln q}{\ln b},
$$
where we recall that $b<q$ from the construction in Section \ref{analysis 1}. We denote
$$
B_1:=\frac{b^r}{q} \quad \textrm{and} \quad  B_2:=\frac{a^r}{q},
$$
which by our assumption, clearly satisfies $0<B_1, B_2<1$. 

Let $I$ be any $q$-adic interval. Without loss of generality, we may assume $I$ intersects at least one $I_\ell^{\alpha_\ell}$, otherwise the constant $C$ in \eqref{20200829eq02} is simply $1$. 

\textbf{Particular case.} First of all, we consider the case when $I \subseteq I_\ell^{\alpha_\ell}$ for some $\ell$. Note that among all the $q$-adic offspring of $I_\ell^{\alpha_\ell}$, the only interesting cases are $I$ coincides one of the following: 
\begin{equation} \label{20200829eq03}
I_\ell^{\alpha_\ell}, H^{(k)} \ \textrm{and} \ G^{(k)}, \quad k=1, \dots, 2\alpha-1. 
\end{equation} 
Otherwise, the weight on $I$ is of the form $a^x b^y$ for some $x, y \in \N$, and in this case, one can easily check that
$$
\left(\fint_I w_\mu^r \right)^{\frac{1}{r}}=\fint_I w_\mu, 
$$
that is, the constant $C$ in \eqref{20200829eq02} is $1$. 

Let us consider five different cases for the intervals in \eqref{20200829eq03}.

\medskip

\textit{Case I: $I=H^{(k)}, k=\alpha, \dots, 2\alpha-1$.} On one hand
$$
\fint_{H^{(k)}} w_\mu = \frac{\mu(H^{(k)})}{|H^{(k)}|}=a^\alpha b^{k-\alpha},
$$
and on the other hand
\begin{eqnarray*}
\fint_{H^{(k)}} w_\mu^r%
&=& \frac{a^{\alpha r} b^{\alpha r}}{q^{2\alpha-k}}+ \frac{a^{(\alpha+1)r} b^{(k-\alpha)r} \cdot (q-1)}{q} \cdot \sum_{i=0}^{2\alpha-k-1} \left(\frac{b^{r}}{q}\right)^i\\
& \le & a^{\alpha r} b^{(k-\alpha)r} \cdot \left[ B_1^{2\alpha-k}+ \frac{q-1}{q} \cdot \frac{a^r}{1-B_1} \right] \\
& \le& a^{\alpha r} b^{(k-\alpha)r } \cdot \left[ 1+ \frac{q-1}{q} \cdot \frac{1}{1-B_1} \right]  \\
&=&  C_1 \cdot a^{\alpha r} b^{(k-\alpha)r },
\end{eqnarray*}
where we denote 
$$
C_1:=1+ \frac{q-1}{q} \cdot \frac{1}{1-B_1}.
$$
Therefore, \eqref{20200829eq02} holds in this case with the constant $C=C_1^{\frac{1}{r}}$.

\medskip

\textit{Case II: $I=H^{(k)}, k=1, \dots, \alpha-1$.} On one hand, we have
$$
\fint_{H^{(k)}} w_\mu = \frac{\mu(H^{(k)})}{|H^{(k)}|}=a^k.
$$
On the other hand, 
\begin{eqnarray*}
\fint_{H^{(k)}} w_\mu^r%
&=& \frac{a^{\alpha r} b^{\alpha r}}{q^{2\alpha-k}}+\frac{a^{(\alpha+1)r} (q-1)}{q^{\alpha-k+1}} \cdot  \left[ \sum_{i=0}^{\alpha-1} \left(\frac{b^r}{q} \right)^i \right] \\
&& \quad \quad + \frac{b^ra^{kr}}{q} \cdot \left[ \sum_{i=0}^{\alpha-k-1} \left( \frac{a^r}{q} \right)^i \right] \\
&& \quad \quad +\frac{a^{(k+1)r}(q-2)}{q} \cdot \left[ \sum_{i=0}^{\alpha-k-1} \left( \frac{a^r}{q} \right)^i \right] \\
&\le&  \frac{a^{\alpha r} b^{\alpha r}}{q^{2\alpha-k}}+\frac{a^{(\alpha+1)r} (q-1)}{q^{\alpha-k+1}} \cdot \frac{1}{1-\frac{b^r}{q}}+\frac{b^r a^{kr}}{q} \cdot \frac{1}{1-\frac{a^r}{q}} \\
&& \quad \quad +\frac{a^{(k+1)r}(q-2)}{q} \cdot \frac{1}{1-\frac{a^r}{q}} \\
&=& a^{kr} \left( B_1^\alpha B_2^{\alpha-k}+ \frac{B_2^{\alpha-k+1} (q-1)}{1-B_1}+ \frac{B_1+B_2(q-2)}{1-B_2} \right) \\
& \le& a^{kr} \left(1+\frac{q-1}{1-B_1}+\frac{B_1+B_2(q-2)}{1-B_2} \right) \\
&& \quad \quad (\textrm{since} \ 0<B_1, B_2<1.) \\ 
&=& C_2 \cdot a^{kr}, 
\end{eqnarray*}
where we denote 
$$
C_2:=1+\frac{q-1}{1-B_1}+\frac{B_1+B_2(q-2)}{1-B_2}
$$
and therefore, \eqref{20200829eq02} holds in this case with the constant $C=C_2^{\frac{1}{r}}$.

\medskip

\textit{Case III: $I=G^{(k)}, k=\alpha, \dots, 2\alpha-1$.} On one hand, we have
$$
\fint_{G^{(k)}} w_\mu = \frac{\mu(G^{(k)})}{|G^{(k)}|}=b^\alpha a^{k-\alpha}.
$$
On the other hand,
\begin{eqnarray*}
\fint_{G^{(k)}} w^r_\mu%
&=& \frac{b^{\alpha r} a^{\alpha r}}{q^{2\alpha-k}}+ b^{\alpha r} a^{r(k-\alpha)} (q-2) \cdot \frac{a^r}{q} \cdot \sum_{i=0}^{2\alpha-k-1} \left(\frac{a^r}{q} \right)^i \\
&& \quad \quad + b^{\alpha r} a^{r(k-\alpha)} \cdot \frac{b^r}{q} \cdot \sum_{i=0}^{2\alpha-k-1} \left(\frac{a^r}{q} \right)^i \\
&\le & b^{\alpha r} a^{(k-\alpha)r} \left[ \left(B_2 \right)^{2\alpha-k}+ \frac{(q-2)B_2+B_1}{1-B_2} \right] \\
&\le& b^{\alpha r} a^{(k-\alpha)r} \left[ 1+ \frac{(q-2)B_2+B_1}{1-B_2} \right] \\
&=& C_3 \cdot b^{\alpha r} a^{(k-\alpha)r},
\end{eqnarray*}
where we denote
$$
C_3:= 1+ \frac{(q-2)B_2+B_1}{1-B_2}
$$
and therefore, \eqref{20200829eq02} holds in this case with the constant $C=C_3^{\frac{1}{r}}$.

\medskip

\textit{Case IV: $I=G^{(k)}, k=1, \dots, \alpha-1$.}  On one hand, we have
$$
\fint_{G^{(k)}} w_\mu = \frac{\mu(G^{(k)})}{|G^{(k)}|}=b^k.
$$
On the other hand,
\begin{eqnarray*}
\fint_{G^{(k)}} w^r_\mu%
&=& \frac{b^{\alpha r} a^{\alpha r}}{q^{2\alpha-k}}+ b^{kr} \cdot (q-2) \cdot \left(\frac{b^r}{q} \right)^{\alpha-k} \cdot \frac{a^r}{q} \sum_{i=0}^{\alpha-1} \left(\frac{a^r}{q} \right)^{i-1} \\
&& \quad \quad + b^{kr} \cdot \frac{b^{(\alpha+1-k)r}}{q^{\alpha+1-k}} \cdot \sum_{i=0}^{\alpha-1} \left( \frac{a^r}{q} \right)^i \\
&& \quad \quad + b^{kr} \cdot\frac{a^r}{q} \cdot (q-1) \cdot \sum_{i=0}^{\alpha-k-1} \left( \frac{b^r}{q} \right)^i \\
&\le& b^{kr} \left[B_1^{\alpha-k} B_2^\alpha+\frac{(q-2)B_2+B_1}{1-B_2}+\frac{(q-1)B_2}{1-B_1} \right] \\
&\le& b^{kr} \left[1+\frac{(q-2)B_2+B_1}{1-B_2}+\frac{(q-1)B_2}{1-B_1} \right] \\
&& \quad \quad (\textrm{since} \ 0<B_1, B_2<1.) \\ 
&=& C_4 \cdot b^{kr}, 
\end{eqnarray*}
where we denote
$$
C_4:= 1+\frac{(q-2)B_2+B_1}{1-B_2}+\frac{(q-1)B_2}{1-B_1}
$$
and therefore, \eqref{20200829eq02} holds in this case with the constant $C=C_4^{\frac{1}{r}}$.

\medskip

\textit{Case V: $I=I_\ell^{\alpha_\ell}$.} One one hand, we have
$$
\fint_{I_\ell^{\alpha_\ell}} w_u=\frac{\mu(I_\ell^{\alpha_\ell})}{|I_\ell^{\alpha_\ell}|}=1. 
$$
Other the other hand
\begin{eqnarray*}
\fint_{I_\ell^{\alpha_\ell}} w_u^r%
&=& \frac{q-2}{q} \cdot \fint_{E^{(0)}} w_u^r+ \frac{1}{q} \cdot \fint_{H^{(1)}} w_u^r+ \frac{1}{q} \cdot \fint_{G^{(1)}} w_u^r \\
&\le&  \frac{(q-2)a^r}{q}+ \frac{C_2 a^r}{q}+\frac{C_4 b^r}{q} \\
&=& (q-2+C_2) B_2+ C_4B_1, 
\end{eqnarray*}
where in the second to last line, we use the estimate from \textit{Case II} and \textit{Case IV} above. Hence in this case, \eqref{20200829eq02} holds with the constant $C=C_5^{\frac{1}{r}}$, where we denote
$$
C_5:=(q-2+C_2) B_2+ C_4B_1.
$$

\textbf{General case.} Finally, we consider the case when $I$ contains one or more $I_\ell^{\alpha}$. By our construction, it is clear that
$$
\fint_I w_\mu=1. 
$$
Let us assume there are indices $\ell_1, \dots, \ell_{\widetilde{M}}$, such that
$$
I_{\ell_j}^{\alpha_{\ell_j}} \subsetneq I, \quad j=1, \dots, \widetilde{M}. 
$$
Without loss of generality, we may assume $\widetilde{M}<\infty$. The key point is that the constant we get here is independent of the choice of any finite $\widetilde{M}$, therefore, the estimate for the case when $I$ contains infinitely many $I_\ell^{\alpha_\ell}$ follows by a standard limiting argument.

From the proof of the case when $I \subset I_\ell^{\alpha_\ell}$, we see that
$$
\left( \fint_{I_{\ell_j}^{\alpha_{\ell_j}}} w_u^r  \right)^{\frac{1}{r}} \le C' \fint_{I_{\ell_j}^{\alpha_{\ell_j}}} w_u, \quad j=1, \dots, \widetilde{M}
$$
where we set
$$
C':= \max\left\{C_1^{\frac{1}{r}}, \dots, C_5^{\frac{1}{r}} \right\}.
$$
Let us assume
$$
\omega_j:=\frac{\left|I_{\ell_j}^{\alpha_{\ell_j}}\right|}{|I|}, \quad j=1, \dots, \widetilde{M}. 
$$
In particular, this suggests that
$$
\sum_{j=1}^{\widetilde{M}} w_j \le 1. 
$$
Finally, we denote
$$
I^c:=I \backslash \left( \bigcup_{j=1}^{\widetilde{M}} I_{\ell_j}^{\alpha_{\ell_j}} \right), \quad \textrm{and} \quad \omega_{\widetilde{M}+1}=1-\sum_{j=1}^{\widetilde{M}} w_j. 
$$
Note that $\omega_{\widetilde{M}+1}=\frac{|I^c|}{|I|}$.

Therefore, we have
\begin{eqnarray*}
\fint_I w_\mu^r%
&=& \frac{1}{|I|} \cdot \int_I w_\mu^r = \frac{1}{|I|} \cdot \left( \sum_{j=1}^{\widetilde{M}} \int_{I_{\ell_j}^{\alpha_{\ell_j}}} w_\mu^r+ \int_{I^c} w_\mu^r \right) \\
&=& \sum_{j=1}^{\widetilde{M}} \omega_j \fint_{I_{\ell_j}^{\alpha_{\ell_j}}} w_\mu^r+ \omega_{\widetilde{M}+1} \fint_{I^c} w_\mu^r \\
&\le&  \sum_{j=1}^{\widetilde{M}} \omega_j \cdot \left(C' \right)^r \cdot \left(\fint_{I_{\ell_j}^{\alpha_{\ell_j}}} w_\mu \right)^r+\omega_{\widetilde{M}+1} \\
&=& \left(C' \right)^r \cdot  \sum_{j=1}^{\widetilde{M}} \omega_j+\omega_{\widetilde{M}+1} \\
&\le& (C')^r+1
\end{eqnarray*}
that is, in general, we have
$$
\left( \fint_I w_\mu^r \right)^\frac{1}{r} \le C \fint_I w_\mu, \quad  \textrm{for  any $q$-adic interval $I$},
$$
with $C^r:= (C')^r+1$ if $1<r<\frac{\ln q}{\ln b}$. The proof is complete. 
\end{proof}

\begin{prop} \label{20200831prop01}
The weight $w_\mu \in RH_1^p$. 
\end{prop}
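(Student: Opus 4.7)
The plan is to mirror the proof of Proposition~\ref{20200831prop02}, now restricted to $p$-adic intervals $J$, and to combine it with the $p$-adic doubling of $w_\mu$ established in Sections~\ref{padic1}--\ref{padic3}. I take the same $r$ as before, $1 < r < \frac{\ln q}{\ln b}$, so that $B_1 = \frac{b^r}{q}$ and $B_2 = \frac{a^r}{q}$ lie strictly below $1$ and the constants $C_1, \ldots, C_5$ from the proof of Proposition~\ref{20200831prop02} remain available and depend only on $p, q, a, b, r$.

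The easy reductions parallel those of Section~\ref{padic1}. If $J$ does not intersect any $J^\ell$, then $w_\mu \equiv 1$ on $J$ and \eqref{20200829eq02} holds with $C=1$. If $J$ strictly contains some $J^\ell$ or meets two or more of them, then $J$ is a $p$-adic ancestor of the $J^\ell$'s it touches; writing these as $\{J^{\ell_j}\}$, we have $\fint_J w_\mu = 1$ (since $\mu(J^{\ell_j}) = |J^{\ell_j}|$ and the complement of $\bigcup_j I_{\ell_j}^{\alpha_{\ell_j}}$ in $J$ carries Lebesgue measure), while Case~V of Proposition~\ref{20200831prop02} gives $\int_{I_{\ell_j}^{\alpha_{\ell_j}}} w_\mu^r \le C_5\,|I_{\ell_j}^{\alpha_{\ell_j}}|$ for each $j$. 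Summing and dividing by $|J|$ yields $\fint_J w_\mu^r \le C_5$, hence \eqref{20200829eq02} with $C = C_5^{1/r}$. The case $J = J^\ell$ is handled identically.

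The substantive case is $J \subsetneq J^\ell$ for a single $\ell$; set $I := I_\ell^{\alpha_\ell}$. The sub-cases $J \cap I = \emptyset$ and $I \subsetneq J$ reduce to the trivial case or the Lebesgue-plus-Case-V template. In the remaining scenarios $J$ properly overlaps $I$ (either $J \subsetneq I$, or $J$ straddles one of $\textZeta, \Upsilon$), and the plan is to partition $J \cap I$ into the maximal $q$-adic pieces of the construction (of types $H^{(k)}, G^{(k)}, E^{(k)}, F^{(k)}$) that are fully contained in $J$, together with at most two boundary pieces where $J$ cuts across such a $q$-adic piece. On each fully contained piece, Cases~I--IV of Proposition~\ref{20200831prop02} give an $r$-power average bound of the form $C_i \cdot (\text{density})^r \cdot (\text{length})$; on each boundary piece, $w_\mu$ is piecewise constant with explicit densities taken from the construction, so $\int w_\mu^r$ on that piece is a finite sum of $(\text{density})^r \cdot (\text{sub-length})$ contributions. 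Summing yields an upper bound on $\int_J w_\mu^r$, and dividing by $|J|$ gives the $r$-th power of a weighted combination of densities.

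The principal obstacle is matching this upper bound to a suitable lower bound on $\fint_J w_\mu$: as $J$ becomes a deeper $p$-adic descendant of $J^\ell$, the natural lower bound degrades with the same $\alpha$-dependent powers of $a$ and $b$ that appear in the upper bound. The resolution is that the $p$-adic doubling estimates from Sections~\ref{padic2}--\ref{padic3}, with absolute constant $A \cdot (b/a^2)^N$, applied iteratively from $J^\ell$ (where $\fint_{J^\ell} w_\mu = 1$) down to $J$, produce a lower bound on $\fint_J w_\mu$ precisely of the same weighted-average-of-densities form as the upper bound on $\fint_J w_\mu^r$ after taking $r$-th roots. The density-dependent factors therefore cancel in the reverse H\"older ratio, leaving behind a constant that depends only on $C_1, \ldots, C_5$ and the $p$-adic doubling constant, hence only on $p, q, a, b, r$. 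The hardest part of the argument is the bookkeeping in this cancellation step, where the explicit structure of the exhaustion procedure of Section~\ref{exhaustion procedure} ensures that the same densities are indexing both the upper and the lower bounds.
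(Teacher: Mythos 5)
There is a genuine gap at the crux of your argument: the lower bound on $\fint_J w_\mu$. You propose to get it by iterating the $p$-adic doubling estimate generation by generation from $J^\ell$ (where the average is $1$) down to $J$. But each application of doubling only gives $\fint_{J'} w_\mu \ge \tfrac{1}{C}\fint_{\widehat{J'}} w_\mu$ with the fixed constant $C = A\,(b/a^2)^N > 1$, and the number of generations $d$ between $J^\ell$ and $J$ is unbounded (it grows with the depth $K$, hence with $\alpha$). So the iteration only yields $\fint_J w_\mu \gtrsim C^{-d}$, which is \emph{not} "of the same weighted-average-of-densities form" as the upper bound: the true average on a $J$ sitting inside $E^{(K)}$ is comparable to $a^K$ (resp.\ $a^\alpha b^{K-\alpha}$), and since $d \approx K\log q/\log p$ while $C \ge (b/a^2)^N \ge (1/a)^{2\log p/\log q}$, one has $C^{-d} \ll a^K$ with a gap that grows geometrically in $K$. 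The claimed cancellation therefore fails, and the reverse H\"older constant you would extract depends on $\alpha$ (equivalently on the depth of $J$), which is exactly what must be excluded. Your upper-bound step (decomposing $J\cap I$ into the $E^{(k)}, F^{(k)}, H^{(k)}, G^{(k)}$ pieces and invoking Cases I--IV of Proposition \ref{20200831prop02}) is fine and close in spirit to the paper, which instead enlarges $J$ to the shortest containing $H^{(K-N')}$ and checks $|H^{(K-N')}| \simeq |J|$; the trivial reductions are also handled correctly.

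The paper's resolution of the lower bound is different and avoids any iteration: it applies the exhaustion-procedure comparability \emph{once}, at the level of the children of $J$ itself, to get $\fint_J w_\mu \ge \frac{p}{\widetilde C\,|J|}\,\mu(J^*)$, where $J^*$ is a distinguished child (one of $J_{p-1}, J_{p-2}, J_{p-3}$) on which $w_\mu$ is constant, equal to an explicit value $a^{n_1}b^{n_2}$ read off from the construction. This gives $\fint_J w_\mu \gtrsim a^{K}$ (resp.\ $a^\alpha b^{K-\alpha}$), which matches, up to a factor depending only on $a, b, p, q$ and the bounded offset $N'\le N$, the density appearing in the upper bound $\fint_{H^{(K-N')}} w_\mu^r \lesssim a^{(K-N')r}$ (resp.\ $a^{\alpha r}b^{(K-N'-\alpha)r}$). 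To repair your proof you would need to replace the iterated-doubling step by such a single-scale identification of $\fint_J w_\mu$ with the local density (or some equivalent device); as written, the cancellation you rely on does not hold.
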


\begin{proof}
This proof of this proposition is an application of the exhaustion procedure and Proposition \ref{20200831prop02}.  We reference the structure and setup in Sections \ref{analysis 1} and \ref{padic1} often.

Recall that for each $I_\ell^{\alpha_\ell}$, $J_\ell$ is the smallest $p$-adic interval that contains $I_\ell^{\alpha_\ell}$ with $J_\ell \subset [0, 1)$ and the $J_\ell$'s are pairwise disjoint. 

Let $J$ be the $p$-adic interval to be tested. There are several reductions that we can make.

\medskip

\textbf{Reduction I:} To begin with, we may assume again that $J$ coincides with some $J_\ell$ or some of its $p$-adic offspring. Otherwise, if $J$ contains some $J_\ell$ properly or contains more than two $J_\ell$, we can argue as we did for Proposition \ref{20200831prop02}. 

\medskip

\textbf{Reduction II:} We may assume $\alpha=\alpha_\ell>2N$ where we recall that $N=\floor*{\frac{\log p}{\log q}}+1$ is constant we fixed at the beginning of the exhaustion procedure. Otherwise, we can simply estimate \eqref{20200829eq01} crudely, as all the weights here only depend on $a$, $b$, $q$ and $p$.  

\medskip

\textbf{Reduction III:} The third reduction would be that we may assume 
$$
J \cap \left(J_\ell \right)_1 \neq \emptyset \quad \textrm{or} \quad  J \cap \left(J_\ell \right)_2 \neq \emptyset,
$$
otherwise, the measure equipped on $J$ is the standard Lebesgue measure and \eqref{20200829eq01} holds trivially. Without loss of generality, let us assume 
the intersection of $J$ and $\left(J_\ell \right)_1$ is nonempty, the other case can be argued similarly as the ``mirror symmetric" argument in Section \ref{20200815Sec01}. 

\medskip

\textbf{Reduction IV:} Recall the points $\textZeta$ and $\Upsilon$ defined in Section \ref{analysis 1}. Note that it suffices to consider the case when $Z>l(J_p)$. Otherwise, by Reduction III, we have 
$$
J_p \subset [Z, \Upsilon],
$$
in particular, 
$$
|J_p| \le \left| [Z, \Upsilon] \right|<\frac{|I|}{q^{100\alpha}}.
$$
where $I=I_\ell^{\alpha_\ell}$. This implies that $J \subset H^{(2\alpha)} \cup G^{(2\alpha)}$ since $\left|H^{(2\alpha)}\right|=\left|G^{(2\alpha)} \right|=\frac{|I|}{q^{2\alpha}}$. Therefore, the estimate \eqref{20200829eq01} is again trivial with the constant $C=1$.

\medskip

\textbf{Reduction V:} Furthermore, we may assume that $J_p \subset I$, that is, there exists some $K \in \{1, \dots, 2\alpha\}$, such that $l(J_p) \in E^{(K)}$. Again, the case when such a $K$ does not exist is even easier.
\medskip

We first bound the term 
$$
\Theta_1:=\left(\fint_J w_\mu^r \right)^{\frac{1}{r}}
$$
from above.

Here comes the key observation: if $l(J_p) \in E^{(K)}$, then by the exhaustion procedure, $J \in H^{(K-N)}$, and we may assume $J \subseteq H^{(K-N')}$ ($N' \in \{0, 1, \dots, N\}$), where $H^{(K-N')}$ is the shortest $H^{(k)}$-interval that contains $J$.

\medskip

\textbf{Reduction VI:} Without loss of generality, we may assume $K-N' \ge 1$. Otherwise, we let $J'$ be the unique $p$-adic child which contains $l\left(H^{(1)} \right)$, and group the $p$-adic children of $J$ as follows:
\begin{enumerate}
  
 \item [(1).] $\left\{J_i: J_i \ \textrm{is on the left of} \ J'\right\} \cup \{J'\}$;
    
    \medskip
    
    \item [(2).] $\left\{J_i, J_i \ \textrm{is on the right of}  \ J'\right\}$.
    
\end{enumerate}

The first group either is weighted by $1$ or $a$, and the second group can be dealt with it by using the argument for $K-N' \ge 1$. Finally, we glue both groups together, and we may argue again as in the proof of the general case in Proposition \ref{20200831prop02}. 

\bigskip

Therefore, we have
\begin{equation} \label{20200901eq01}
\Theta_1^r = \fint_J w_\mu^r=\frac{1}{|J|} \int_J w_\mu^r \le \frac{1}{|J|} \int_{H^{(K-N')}} w_\mu^r
\end{equation}

\medskip

On the other hand, let 
$$
\Theta_2:=\fint_J w_\mu,
$$
and we would like to bound it from below. To begin with, we denote $J^* $ to be the rightmost $p$-adic children among the set $\{J_1, \dots, J_{p-1}\}$ such that there is only one value assigned to the weight on $J^*$, that is $d\left(\mu \big |_{J^*}\right)$ can be written as $\left(a^{n_1} b^{n_2} \right) dx$ for some $n_1, n_2 \in \{0, \dots, \alpha\}$. Note that by the geometric growth of the $E^{(k)}$'s , $J^*$ is either $J_{p-1}$, $J_{p-2}$ or $J_{p-3}$ (see, Figure \ref{20200904Fig01} for examples). This suggests us to treat $J^*$ as a very small shift of $J_p$, however, with a much easier expression to work with. 

    \begin{figure}[ht]
\begin{tikzpicture}[scale=4.5]
\draw (0.1, 0)--(1.1, 0);
\fill (0.1, 0) circle [radius=.3pt];
\fill (1.1, 0) circle [radius=.3pt];
\fill[red] (.35, 0) circle [radius=.4pt];
\fill (.9, 0) circle [radius=.2pt];
\fill (.8, -.01) node [below] {\tiny {\color{blue} $J_p$}};
\fill (.6, -.01) node [below] {\tiny {\color{blue} $J_{p-1}$}};
\fill (.6, .01) node [above] {\footnotesize $J^*$};
\draw [line width=0.5mm, red ] (.5, 0) -- (.7, 0); 
\fill (.5, 0) circle [radius=.2pt];
\fill (.7, 0) circle [radius=.2pt];
\draw [decorate,decoration={brace,amplitude=8pt,raise=10pt},yshift=2pt] (0.1, 0) -- (1.1, 0) node [black,midway,xshift=0cm, yshift=.9cm]  {\footnotesize $E^{(K)}$};
\draw (1.4, 0)--(2.4, 0);
\fill (1.4, 0) circle [radius=.3pt];
\fill (2.4, 0) circle [radius=.3pt];
\fill[red] (1.65, 0) circle [radius=.4pt];
\draw [decorate,decoration={brace,amplitude=8pt,raise=10pt},yshift=2pt] (1.4, 0) -- (2.4, 0) node [black,midway,xshift=0cm, yshift=.9cm]  {\footnotesize $E^{(K)}$};
\fill (1.75, 0) circle [radius=.2pt];
\fill (1.9, 0) circle [radius=.2pt];
\fill (1.825, -.01) node [below] {\tiny {\color{blue} $J_p$}};
\fill (1.675, -.01) node [below] {\tiny {\color{blue} $J_{p-1}$}};
\fill (1.525, -.01) node [below] {\tiny {\color{blue} $J_{p-2}$}};
\fill (1.525, .01) node [above] {\footnotesize $J^*$};
\draw [line width=0.5mm, red ] (1.45, 0) -- (1.6, 0); 
\fill (1.45, 0) circle [radius=.2pt]; 
\fill (1.6, 0) circle [radius=.2pt];
\draw (0.45, -.45)--(2.2, -.45);
\fill (1.2, -.45) circle [radius=.3pt];
\fill (2.2, -.45) circle [radius=.3pt];
\fill[red] (1.45, -.45) circle [radius=.4pt];
\draw [decorate,decoration={brace,amplitude=8pt,raise=10pt},yshift=-2pt] (1.65, -.45) -- (1.35, -.45) node [black,midway,xshift=0cm, yshift=-.9cm] {\tiny {{\color{blue} $J_{p-1}$}}};
\draw [decorate,decoration={brace,amplitude=8pt,raise=10pt},yshift=-2pt] (1.95, -.45) -- (1.65, -.45) node [black,midway,xshift=0cm, yshift=-.9cm] {\tiny {{\color{blue} $J_p$}}};
\draw [decorate,decoration={brace,amplitude=8pt,raise=10pt},yshift=-2pt] (1.35, -.45) -- (1.05, -.45) node [black,midway,xshift=0cm, yshift=-.9cm] {\tiny {{\color{blue} $J_{p-2}$}}};
\draw [decorate,decoration={brace,amplitude=8pt,raise=10pt},yshift=-2pt] (1.05, -.45) -- (.75, -.45) node [black,midway,xshift=0cm, yshift=-.9cm] {\tiny {{\color{blue} $J_{p-3}$}}};
\draw [line width=0.5mm, red] (1.05, -.45) -- (.75, -.45); 
\fill (.9, -0.44) node [above] {\footnotesize $J^*$}; 
\fill (.75, -.45) circle [radius=.2pt]; 
\fill (1.05, -.45) circle [radius=.2pt];
\fill (1.35, -.45) circle [radius=.2pt];
\fill (1.65, -.45) circle [radius=.2pt]; 
\fill (1.95, -.45) circle [radius=.2pt]; 
\draw [decorate,decoration={brace,amplitude=8pt,raise=10pt},yshift=2pt] (0.45, -.45) -- (1.2, -.45) node [black,midway,xshift=0cm, yshift=.9cm]  {\footnotesize $E^{(K-1)}$};
\draw [decorate,decoration={brace,amplitude=8pt,raise=10pt},yshift=2pt] (1.2, -.45) -- (2.2, -.45) node [black,midway,xshift=0cm, yshift=.9cm]  {\footnotesize $E^{(K)}$};
\end{tikzpicture}
\caption{Examples of all three possibilities of $J^*$ with $E^{(K)}$ if $1 \le K \le \alpha-1$, where we recall from Figure \ref{20200811Fig02} that the weight on the right hand side of the red point is $a^{K+1}$, and is $b a^K$ on the left hand side, respectively. Moreover, the weight on the part of $E^{(K-1)}$ which is adjacent to $E^{(K)}$ is $a^{K}$.}
\label{20200904Fig01}
\end{figure}
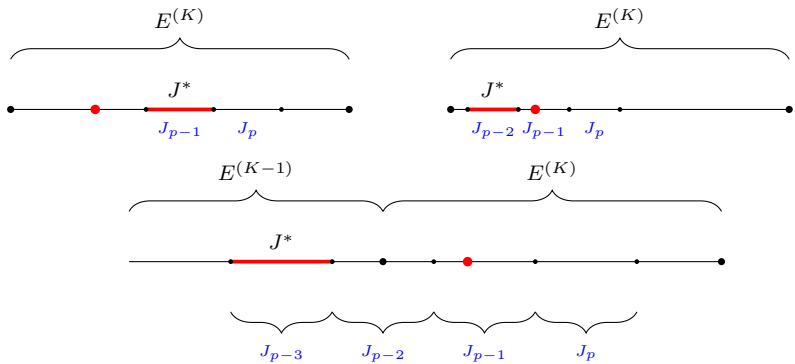

Therefore, we have
\begin{eqnarray} \label{20200901eq02}
\Theta_2%
&=& \frac{1}{|J|} \int_J w_\mu=\frac{1}{|J|} \cdot \left( \mu(J_1)+\dots+\mu(J_p) \right) \nonumber \\
&=& \frac{1}{|J|} \left( \frac{\mu(J_1)}{\mu(J^*)}+ \dots+\frac{\mu(J_p)}{\mu(J^*)} \right) \cdot \mu(J^*) \nonumber \\
& \ge& \frac{p}{\widetilde{C} |J|} \cdot \mu(J^*),
\end{eqnarray}
where in the last estimate, we use the exhaustion procedure and we assume the constant there is $\widetilde{C}$.

\medskip

Therefore, to prove the estimate \eqref{20200829eq01} with respect to the $p$-adic interval $J$, it suffices to show that
$$
\frac{\Theta_1}{\Theta_2} \le \frac{\widetilde{C}}{p} \cdot \left( \frac{\left|H^{(K-N')} \right|}{|J|} \right)^{\frac{1}{r}} \cdot \left( \fint_{H^{(K-N')}} w_\mu^r \right)^{\frac{1}{r}} \cdot \frac{|J|}{\mu(J^*)}
$$
is bounded above by some absolute constant, where in the above estimate, we use \eqref{20200901eq01} and \eqref{20200901eq02}. To this end, let us denote
$$
\Theta_3:=\left( \frac{\left|H^{(K-N')} \right|}{|J|} \right)^{\frac{1}{r}} \cdot \left( \fint_{H^{(K-N')}} w_\mu^r \right)^{\frac{1}{r}} \cdot \frac{|J|}{\mu(J^*)}.
$$
Let us start with analysing the term 
$$
\frac{\left|H^{(K-N')} \right|}{|J|}.
$$
Note that for most cases, $H^{(K-N')}$ is the $q$-adic interval which essentially has the ``same" size of $J$, priorly, we should expect
\begin{equation} \label{20200901eq10}
\frac{\left|H^{(K-N')} \right|}{|J|} \simeq 1, 
\end{equation} 
where the implicit constant here only depends on $q$. More precisely, we consider several possibilities.
\begin{enumerate}
    \item [(i).] If $N'\geq2$, then since $J \subseteq H^{(K-N')}$, we also have $E^{(K-N'+1)} \subset J$ (by the geometric structure, since $l(J_p) \in E^{(K)}$, otherwise $J$ would have to be contained in $H^{(K-1)}$, contradicting the choice of $N'$).  Therefore, we get the desired equation \eqref{20200901eq10}, where the implicit constant in the above equation only depends on $q$; 
    
    \medskip
    
    \item [(ii).] If $N'=0$ or $1$ and $J_p \subset E^{(K)} \cap E^{(K+1)}$, then the weight on $J$ will be one of the following situations: 
    \begin{enumerate}
    
    \item [$\bullet$] If $1 \le K \le \alpha-2$, then these weights are
    $$
    ba^{K-1}, a^K, ba^K, a^{K+1}, ba^{K+1}, a^{K+2}; 
    $$
    
    \medskip
    
        \item [$\bullet$] If $ \alpha+1 \le K \le 2\alpha$, then these weights are
    $$
    a^{\alpha+1}b^{K-1-\alpha}, a^{\alpha+1} b^{K-\alpha}, a^{\alpha+1} b^{K+1-\alpha};
    $$
    
    \medskip
    
    \item [$\bullet$] If $K=\alpha$, then these weights are 
    $$
    ba^{\alpha-1}, a^\alpha, a^{\alpha+1}, a^{\alpha+1}b;
    $$
    
    \medskip
    
    \item [$\bullet$] If $K=\alpha-1$, then these weights are
    $$
    ba^{\alpha-2}, a^{\alpha-1}, ba^{\alpha-1}, a^{\alpha}, a^{\alpha+1}. 
    $$
  
    \end{enumerate}
      
    The key point here is that for each of the situations above, the weights are comparable, in the sense the ratios between them are bounded above and below by some constant only depending on $a$ and $b$, independent of the choices of $K$ and $\alpha$. This allows us to establish the estimate \eqref{20200829eq01};
    \medskip
    
\item [(iii).] If $N'=0$ or $1$ and $E^{(K+1)} \subset J_p$, then since $E^{(K+1)}$ takes a large portion of either $H^{(K)}$ or $H^{(K-1)}$, we can conclude again that \eqref{20200901eq10} holds, with the implicit constant there depending only on $q$. 
\end{enumerate}
 From now on, we assume the ratio in \eqref{20200901eq10} is approximately $1$. 
\medskip

\textbf{Completing the argument:}

\medskip

Now we turn to estimate the rest two terms in $\Theta_3$, that is, 
$$
\left( \fint_{H^{(K-N')}} w_\mu^r \right)^{\frac{1}{r}} \quad \textrm{and} \quad  \frac{|J|}{\mu(J^*)}.
$$
Our goal is to show that
\begin{equation} \label{202009001eq11}
\left( \fint_{H^{(K-N')}} w_\mu^r \right)^{\frac{1}{r}} \cdot  \frac{|J|}{\mu(J^*)} \lesssim 1,
\end{equation}
where the implicit constant above should only depend on $a$, $b$, $p$ and $q$, and independent of $\alpha$ and $K$. 

We again need to consider several different cases. 

\textit{Case $I$: $1 \le K \le \alpha-1$. } First of all, we note that according to the choice of $J^*$, the value of $\mu(J^*)$ is 
$$
\textrm{either} \quad a^K|J^*|, \quad ba^K|J^*|, \quad \textrm{or} \quad a^{K+1}|J^*|.
$$
Since these three quantities differ by a constant multiple which only depends on $a$ and $b$, we may write
$$
\mu(J^*) \simeq a^K |J^*|.
$$
This means
$$
\frac{|J|}{\mu(J^*)}=\frac{p|J^*|}{\mu(J^*)} \simeq \frac{1}{a^K}.
$$
On the other hand, recall that in this case we have $1 \le K-N' \le \alpha-1$, by the computation in \textit{Case II} of Proposition \ref{20200831prop02}, we have
\begin{equation} \label{20200901eq12}
\fint_{H^{(K-N')}} w_\mu^r \le C_2 \cdot a^{(K-N')r} 
\end{equation} 
Therefore
$$
\left( \fint_{H^{(K-N')}} w_\mu^r \right)^{\frac{1}{r}} \cdot  \frac{|J|}{\mu(J^*)} \lesssim a^{K-N'} \cdot \frac{1}{a^K} \le \frac{1}{a^N},
$$
which implies the estimate \eqref{202009001eq11}.

\medskip

\textit{Case II: $\alpha \le K \le 2\alpha$.} Again, we start with estimating $\mu(J^*)$, and as before, we collect all the possible values of $\mu(J^*)$ and this gives us
$$
\mu(J^*) \simeq a^{\alpha} b^{K-\alpha} |J^*|,
$$
where the implicit constant only depends on $a$ and $b$. This means
$$
\frac{|J|}{\mu(J^*)}=\frac{p|J^*|}{\mu(J^*)} \simeq \frac{1}{a^{\alpha} b^{K-\alpha}}. 
$$
While for the upper bound of the term 
$$
\left( \fint_{H^{(K-N')}} w_\mu^r \right)^{\frac{1}{r}},
$$
we consider two sub-cases. 

\begin{enumerate}
    \item [$\bullet$] If $1 \le K-N' \le \alpha-1$, then as in \eqref{20200901eq12}, we have 
    $$
\fint_{H^{(K-N')}} w_\mu^r \le C_2 \cdot  a^{(K-N')r} .
$$
Moreover, we notice that in this case, we have
$$
0 \le K-\alpha \le N'-1 \le N,
$$
and hence
$$
\frac{1}{b^{K-\alpha}} \simeq 1,
$$
where the implicit constant above only depends on $b$, $p$ and $q$. Therefore, 
$$
\left( \fint_{H^{(K-N')}} w_\mu^r \right)^{\frac{1}{r}} \cdot  \frac{|J|}{\mu(J^*)} \lesssim \frac{1}{a^{N'}} \cdot \frac{1}{b^{K-\alpha}} \lesssim \frac{1}{a^{N}},
$$
which again implies the desired estimate \eqref{202009001eq11};

\medskip

\item [$\bullet$] If $\alpha \le K-N' \le 2\alpha$, then using the computation in \textit{Case I} of Proposition \ref{20200831prop02}, we have
$$
\fint_{H^{(K-N')}} w_\mu^r \le C_1 \cdot a^{\alpha r} b^{(K-N'-\alpha)r} .
$$
Therefore, 
$$
\left( \fint_{H^{(K-N')}} w_\mu^r \right)^{\frac{1}{r}} \cdot  \frac{|J|}{\mu(J^*)}  \lesssim b^{-N'} \le 1.
$$
This implies the desired estimate \eqref{202009001eq11} for this case. 
\end{enumerate}

As a conclusion, we have shown that
$$
\Theta_3 \lesssim 1,
$$
where the implicit constant above only depends on $a$, $b$, $p$ and $q$, independent of $\alpha$ and $K$. The proof is complete. 
\end{proof}

\begin{cor} \label{20200901cor01}
For any $r>1$, $RH_r^p \cap RH_r^q \neq RH_r$. In particular, $RH_1^p \cap RH_1^q \neq RH_1$.
\end{cor}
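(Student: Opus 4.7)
The plan is to exhibit the weight $w_\mu$ associated to the measure constructed in Section \ref{20200809sec01} (with the parameters $a,b$ tuned to the given exponent $r$) as a witness in the set-theoretic difference $(RH_r^p\cap RH_r^q)\setminus RH_r$. Three facts are already in hand: $\mu$ is not doubling (end of Section \ref{20200809sec01}); by Proposition \ref{20200831prop02}, $w_\mu\in RH_s^q$ for every $1<s<\ln q/\ln b$; and by Proposition \ref{20200831prop01}, $w_\mu\in RH_s^p$ for the same range of $s$. Together these suggest an immediate construction once we can make the admissible range of $s$ contain any prescribed $r$.

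Given $r>1$, I would first select $b>1$ close enough to $1$ (and correspondingly $a<1$ close to $1$, subject to the normalization $(q-1)a+b=q$) so that $b^r<q$, equivalently $r<\ln q/\ln b$. This is possible since $\ln q/\ln b\to+\infty$ as $b\searrow 1^+$. With $a,b$ thus frozen, run the construction of Section \ref{20200809sec01} to produce $\mu$ and its density $w_\mu$. By our choice of $r$, Propositions \ref{20200831prop02} and \ref{20200831prop01} apply and deliver
\[
w_\mu\in RH_r^p\cap RH_r^q.
\]

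Next I would observe that $w_\mu\notin RH_r$. Every $RH_r$-weight is a doubling weight, whereas the ratio $\mu(H^{(\alpha)})/\mu(G^{(\alpha)})=(a/b)^{\alpha_\ell}$ exhibited in Section \ref{20200809sec01} tends to $0$ as $\ell\to\infty$, precluding doubling. Hence $w_\mu$ witnesses $RH_r^p\cap RH_r^q\neq RH_r$ for the given $r$. The second assertion $RH_1^p\cap RH_1^q\neq RH_1$ is then a one-line consequence: the same $w_\mu$ lies in $RH_r^p\cap RH_r^q\subseteq RH_1^p\cap RH_1^q$, but its failure of doubling rules out membership in $RH_1=\bigcup_{s>1}RH_s$.

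The only point requiring care is verifying that the constants appearing in Propositions \ref{20200831prop02} and \ref{20200831prop01} remain finite when $b$ is taken close to $1$ and $r$ is allowed to be large. A quick inspection shows those constants depend on $a,b,p,q$ only through the quantities $B_1=b^r/q$ and $B_2=a^r/q$, both of which stay strictly less than $1$ precisely under the condition $b^r<q$ that we imposed. Consequently the cited reverse H\"older estimates go through with no modification, and the argument above is complete.
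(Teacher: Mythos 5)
Your proposal is correct and follows essentially the same route as the paper: tune $a,b$ close to $1$ (subject to $(q-1)a+b=q$) so that $r<\ln q/\ln b$, invoke Propositions \ref{20200831prop02} and \ref{20200831prop01} to place $w_\mu$ in $RH_r^q\cap RH_r^p$, and use the failure of doubling to exclude $w_\mu$ from $RH_r$ (and hence from $RH_1$). The only slight imprecision is the claim that the constants depend on $a,b,p,q$ \emph{only} through $B_1=b^r/q$ and $B_2=a^r/q$ --- they also involve quantities like $a^{-N}$ and $q$ directly --- but all that matters is that they stay finite once $b^r<q$, which your choice guarantees, so the argument stands.
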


\begin{proof}
Note that from the proof of Proposition \ref{20200831prop01}, we indeed have if $w_\mu \in RH_r^q$ for some $r$ with $1<r<\frac{\ln q}{\ln b}$, then for the same choice of $r$, there holds $w_\mu \in RH_r^p$. 

To proof the desired claim, it suffices to take $0<a<1<b$ satisfying 
$$
1-a, b-1 \ll 1 \quad \textrm{and} \quad (q-1)a+b=q. 
$$
This suggests that we can make the ratio $\frac{\ln q}{\ln b}$ arbitrarily large. The desired claim then follows easily from Proposition \ref{20200831prop02} and Proposition \ref{20200831prop01}.
\end{proof}

We can also extend the above result to any finite family of primes, which immediately implies Theorem \ref{application theorem}.

\begin{cor}
\label{RH cor}
For any $r>1$ and $\{p_1, \dots, p_M\}$ any finite collection of primes, there holds that 
$$
\bigcap_{i=1}^{M} RH_r^{p_i} \neq RH_r.
$$
In particular,
$$
\bigcap_{i=1}^{M} RH_1^{p_i} \neq RH_1.
$$
\end{cor}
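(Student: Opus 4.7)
The plan is to run the same two-step argument that proved Corollary \ref{20200901cor01}, but with the multi-prime construction from Section \ref{padic3} replacing the two-prime construction. Let $p_1$ be the smallest prime in $\{p_1,\dots,p_M\}$, and let $w_\mu$ be the weight associated to the measure $\mu$ built as in Section \ref{analysis 1}, using $p_1$ in the role previously played by $q$, with the underlying intervals $I_\ell^{\alpha_\ell}$ and containing $p_i$-adic intervals $J_i^\ell$ supplied by Theorem \ref{20200827thm01} and Proposition \ref{20200827prop01}. Fix parameters $0<a<1<b$ with $(p_1-1)a+b=p_1$ and $1-a,\,b-1\ll 1$, and choose the Hölder exponent in the range $1<r<\frac{\ln p_1}{\ln b}$; by taking $a,b$ sufficiently close to $1$ this range can be made to contain any prescribed $r>1$.

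First I would verify that $w_\mu\in RH_r^{p_1}$. This is literally Proposition \ref{20200831prop02}: that proof uses only the internal $p_1$-adic (i.e.\ ``$q$-adic'') structure of the construction, the values of the weights on the $E^{(k)}$, $H^{(k)}$, $G^{(k)}$, $F^{(k)}$ layers, and geometric series controlled by $B_1=b^r/p_1,\,B_2=a^r/p_1\in(0,1)$. Nothing in it refers to the other primes, so it carries over unchanged.

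Next I would show that $w_\mu\in RH_r^{p_i}$ for each $i=2,\dots,M$. Here the plan is to repeat the argument of Proposition \ref{20200831prop01}, with $p_i$ taking the role of $p$. The exhaustion procedure of Section \ref{exhaustion procedure} was written for an arbitrary prime $p>q=p_1$, and it relies only on the geometric progression of the layers $E^{(k)}$ together with the bound $N_i:=\lfloor \log p_i/\log p_1\rfloor+1$ on the number of exhaustion steps; so the five reductions and the case analysis for $\Theta_1,\Theta_2,\Theta_3$ go through verbatim for each $p_i$, yielding an $RH_r^{p_i}$ constant that depends only on $a,b,p_1,p_i$ (and on $r$), but not on the scale $\alpha$. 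Crucially, the admissible range $1<r<\frac{\ln p_1}{\ln b}$ is the same for every $i$, so a single $r$ works simultaneously for all $M$ classes.

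Finally, by the construction, the ratio $\mu(H^{(\alpha_\ell)})/\mu(G^{(\alpha_\ell)})=(a/b)^{\alpha_\ell}$ can be made arbitrarily small as $\ell\to\infty$ (Theorem \ref{20200827thm01}(2)), so $w_\mu$ is not doubling and therefore $w_\mu\notin RH_r$. Hence $w_\mu\in \bigcap_{i=1}^M RH_r^{p_i}\setminus RH_r$, which proves the first assertion. The second assertion, $\bigcap_{i=1}^M RH_1^{p_i}\neq RH_1$, is immediate: the same $w_\mu$ lies in $RH_r^{p_i}$ for the chosen $r>1$, hence in $RH_1^{p_i}$, for every $i$, yet fails to be doubling and so cannot belong to $RH_1=\bigcup_{s>1}RH_s$. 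The main obstacle I anticipate is purely bookkeeping: checking that the constants in the $p_i$-adic exhaustion procedure and in the bound \eqref{202009001eq11} remain independent of $\alpha$ uniformly in $i$, which follows because they are expressible in terms of $a,b,p_1,p_i$ and the fixed integer $N_i$.
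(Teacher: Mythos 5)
Your proposal is correct and follows essentially the same route the paper intends: it invokes the multi-prime selection results (Theorem \ref{20200827thm01}, Proposition \ref{20200827prop01}), runs the exhaustion-procedure argument of Propositions \ref{20200831prop02} and \ref{20200831prop01} for each pair $(p_i,p_1)$, and then uses the choice of $a,b$ close to $1$ exactly as in Corollary \ref{20200901cor01} to cover every $r>1$. You have simply written out the details the paper leaves to the reader, and the bookkeeping you flag (constants depending only on $a,b,p_1,p_i,N_i$, not on $\alpha$) is indeed the only thing to check.
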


\begin{proof}
The proof of this result follows from an easy modification of the exhaustion procedure for finite families of primes, Theorem \ref{20200827thm01}, Proposition \ref{20200827prop01} and Corollary \ref{20200901cor01}, and we would like to leave the details to the interested reader. 
\end{proof}
 
Finally, we can prove analogous statements about the Muckenhoupt $A_p$ weights (which we will call $A_r$ weights to prevent confusion with $p$ being used for a prime).  Recall the definition:
\begin{defn}
Let $1<r<\infty$,  we say a weight $w\in A_r$ if 
\[
\sup_I \left(\fint_I w(x)dx\right)\left(\fint_I w(x)^{\frac{-1}{r-1}}dx\right)^{r-1} < \infty,
\]
where the supremum is taken over all intervals $I$. Moreover, we say $w \in A_\infty$ if $w \in A_r$ for some $r>1$, that is,
$$
A_\infty:=\bigcup_{r>1} A_r. 
$$
\end{defn}
 We define the \emph{$p$-adic} $A_r^p$, as well as \emph{$p$-adic} $A^p_\infty$, similarly by only allowing averages along $p$-adic intervals.  Note that the $A_r$ condition implies doubling.  An easy modification of the proof of Proposition \ref{20200831prop02}, Proposition \ref{20200831prop01} and Corollary \ref{20200901cor01} allows us to conclude the following analog for  Muckenhoupt $A_p$ weights, which immediately implies Theorem \ref{Ap thm}:
 
\begin{cor} \label{Ap cor}
For any $r>1$ and $\{p_1, \dots, p_M\}$ any finite collection of primes, then there holds that 
$$
\bigcap_{i=1}^{M} A_r^{p_i} \neq A_r.
$$
In particular,
$$
\bigcap_{i=1}^{M} A_\infty^{p_i} \neq A_\infty.
$$
 \end{cor}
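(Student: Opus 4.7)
The plan is to adapt the proofs of Propositions \ref{20200831prop02}, \ref{20200831prop01}, and Corollary \ref{20200901cor01} to the $A_r$ setting, using the same weight $w_\mu$ from Section \ref{20200809sec01}. Since every $A_r$ weight is doubling, and $w_\mu$ fails to be doubling, $w_\mu \notin A_r$ is automatic; the task reduces to showing, for a suitable choice of the parameters $a, b$, that $w_\mu \in A_r^{p_i}$ for every $i = 1, \dots, M$.

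Fix $r > 1$. The $A_r$ characteristic of $w_\mu$ on an interval $I$ is
\[
\left(\fint_I w_\mu\right)\left(\fint_I w_\mu^{-1/(r-1)}\right)^{r-1}.
\]
For each building block $I = H^{(k)}, G^{(k)}, I_\ell^{\alpha_\ell}$, the first factor evaluates to a single monomial in $a, b$ (for instance $a^k$, $b^k$, $a^\alpha b^{k-\alpha}$, or $1$), while the second factor requires the Cases I--V decomposition of Proposition \ref{20200831prop02} applied to $w_\mu^{-1/(r-1)}$ in place of $w_\mu^r$. The resulting geometric sums have ratios $a^{-1/(r-1)}/q$ and $b^{-1/(r-1)}/q$; the second is automatic since $b > 1$, while the first demands
\[
r > 1 + \frac{\ln(1/a)}{\ln q}.
\]
First I would choose $0 < a < 1$ sufficiently close to $1$ (and set $b = q - (q-1)a > 1$) so that this inequality is satisfied. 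Multiplying the explicit first factor against the controlled second factor raised to $r-1$, the monomial exponents of $a$ and $b$ cancel exactly, and the product is bounded uniformly in $k, \alpha, \ell$. This yields $w_\mu \in A_r^q$.

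Membership in $A_r^p$ is then obtained by rerunning the exhaustion procedure of Section \ref{exhaustion procedure} exactly as in Proposition \ref{20200831prop01}. With Reductions I--VI in place, one identifies the auxiliary child $J^*$ and the ancestor $H^{(K-N')}$ of comparable size (so that $|H^{(K-N')}|/|J| \simeq 1$). The lower bound $\fint_J w_\mu \gtrsim \mu(J^*)/|J|$ from the original proof carries over verbatim, while the upper bound $\fint_J w_\mu^{-1/(r-1)} \lesssim \fint_{H^{(K-N')}} w_\mu^{-1/(r-1)}$ is handled by the $A_r^q$ analysis just sketched. The weight of $J^*$ is a single monomial $a^{n_1} b^{n_2}$, and when combined with the matching $a^{-n_1/(r-1)} b^{-n_2/(r-1)}$ factor extracted from the averaged $w_\mu^{-1/(r-1)}$ over $H^{(K-N')}$, one recovers exactly the cancellation needed for the resulting product to be bounded independently of $\alpha$ and $K$. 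Extension from the pair $(p, q)$ to a finite family $\{p_1, \dots, p_M\}$ follows by applying the above to each pair $(p_i, p_1)$, with $p_1$ the smallest, together with Theorem \ref{20200827thm01}.

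The main obstacle will be the appearance of the dual exponent $-1/(r-1)$, which introduces the new constraint $r > 1 + \ln(1/a)/\ln q$ absent from the $RH_r$ analysis and forces a careful re-examination of the Case I--V sums as well as the compensation between $\mu(J^*)$ and the average of $w_\mu^{-1/(r-1)}$ across each of the regimes $1 \le K \le \alpha - 1$ and $\alpha \le K \le 2\alpha$. The flexibility of pushing $a$ arbitrarily close to $1$ while retaining $(q-1)a + b = q$ ensures that for any prescribed $r > 1$ the required window of parameters is nonempty, so no value of $r$ is ever excluded, and the $A_\infty$ statement follows by taking the union over $r$.
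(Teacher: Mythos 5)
Your proposal is correct and follows essentially the same route as the paper: the paper likewise reruns Propositions \ref{20200831prop02} and \ref{20200831prop01} with $r$ replaced by $-\tfrac{1}{r-1}$ (so the convergence ratios become $a^{-1/(r-1)}/q$ and $b^{-1/(r-1)}/q$, forcing $r>1-\tfrac{\ln a}{\ln q}$) and then pushes $a$ toward $1$ to cover every $r>1$, extending to finitely many primes exactly as you do. The only point to watch is that in the $p$-adic exhaustion step the $A_r$ product requires an \emph{upper} bound on $\fint_J w_\mu$ rather than the lower bound you quote from Proposition \ref{20200831prop01}; this is harmless, since the exhaustion estimates are two-sided and give $\fint_J w_\mu \simeq \mu(J^*)/|J^*|$, which is the comparability your cancellation argument actually uses.
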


\begin{proof}
Note that since any reverse H\"older weight (of class $r$) is also an $A_{r_1}$ weight for some $1 <r_1<\infty$, and similarly for the prime classes, Corollary \ref{RH cor} indeed directly implies Corollary \ref{Ap cor} holds for \emph{some $r>1$}. 

However, it turns out that we can improve such a range to all $r>1$ and the proof is parallel to those in  Proposition \ref{20200831prop02}, Proposition \ref{20200831prop01} and Corollary \ref{20200901cor01}. Let us mention the necessary modifications that we need to make to prove the result: recall that at the beginning of Proposition  \ref{20200831prop02}, we pick $r>1$, such that
$$
1<r<\frac{\ln q}{\ln b}. 
$$
Now we replace this by
$$
1-\frac{\ln a}{\ln q}<r<\infty.
$$
Note that since $0<a<1$, the term on the left hand side above is strictly bigger than $1$, in particular, this means that when proving the analog of Corollary \ref{20200901cor01} for Muckenhoupt weights, we again choose $a$ sufficiently close to $1$, to make $1-\frac{\ln a}{\ln q}$ arbitrarily close to the threshold $1$.  The rest of the proof then follows by interchanging the role of $b$ and $a$, and replacing the role of $r$ in the proof of  Proposition \ref{20200831prop02}, Proposition \ref{20200831prop01} and Corollary \ref{20200901cor01} by $-\frac{1}{r-1}$, for example, we may define
$$
B_3:=\frac{a^{-\frac{1}{r-1}}}{q} \quad \textrm{and} \quad B_4:=\frac{b^{-\frac{1}{r-1}}}{q}, 
$$
to replace $B_1$ and $B_2$ there, respectively. We would like to leave the detail to the interested reader. 
\end{proof}


\begin{thebibliography}{ABCD99}

\bibitem{A}  T.C. Anderson, A framework for Calder\'on-Zygmund Operators on Spaces of Homogeneous Type.  Ph.D. thesis, Brown University, 2015.

\bibitem{AHJOW}  T.C. Anderson, B. Hu, L. Jiang, C. Olson and Z. Wei.  On the translates of general dyadic systems on $\R$.  \emph{Mathematische Annalen}, January, 2, 2020.

\bibitem{AH} T.C. Anderson, B. Hu,  On the general dyadic grids on $\R^d$, arXiv, preprint, 2020. 


\bibitem{AW} T.C. Anderson and D.E. Weirich.  A dyadic Gehring inequality in spaces of homogeneous type and applications. \emph{New York J. Math.} 24 (2018), 1--19.

\bibitem{BMW} D.M Boylan, S.J. Mills, and L.A. Ward. Construction of an exotic measure: dyadic doubling and triadic doubling does not imply doubling. \emph{J. Math. Anal. Appl.} 476 (2019), no. 2, 241--277.

\bibitem{Conde} Jose M. Conde Alonso. A note on dyadic coverings and nondoubling Calderón-Zygmund theory. \emph{J. Math. Anal. Appl.}, 397 (2013), no. 2, 785--790.

\bibitem{DCU -1} D. Cruz-Uribe. The minimal operator and the geometric maximal operator in $\R^n$. Studia Mathematica, 144(1), 1–37.

\bibitem{DCU} D. Cruz-Uribe.  Two weight inequalities for fractional integral operators and commutators (pp. 25–85). Presented at the VI International Course of Mathematical Analysis in Andalusia, WORLD SCIENTIFIC. $http://doi.org/10.1142/9789813147645_0002$

\bibitem{CN} D. Cruz-Uribe; C.J. Neugebauer. The structure of the reverse Hölder classes. Trans. Amer. Math. Soc. 347 (1995), no. 8, 2941--2960. 

\bibitem{GJ} J. B. Garnett and P. W. Jones. BMO from dyadic BMO. Pacific J. Math., 99(2):351–371, 1982.

\bibitem{G} F. W. Gehring,  The $L^p$ integrability of partial derivatives of a quasiconformal mapping, Acta. Math.130 (1973), 265–277

\bibitem{TH} T. Hytönen. The sharp weighted bound for general Calderón-Zygmund operators.Ann. of Math. (2), 175(3):1473–1506, 2012.

\bibitem{HK}  T. Hyt\"nen and A. Kairema. Systems of dyadic cubes in a doubling metric space. \emph{Colloq. Math}. 126 (2012), no. 1, 1--33.

\bibitem{HPR} T. Hyt\"onen, C. Pérez, and;E. Rela. Sharp reverse Hölder property for $A_\infty$ weights on spaces of homogeneous type. \emph{J. Funct. Anal.} 263 (2012), no. 12, 3883--3899.

\bibitem{K et al} A. Kairema, J. Li, M.C. Pereyra, L.A. Ward. Haar bases on quasi-metric measure spaces, and dyadic structure theorems for function spaces on product spaces of homogeneous type. J. Funct. Anal. 271 (2016), no. 7, 1793--1843. 

\bibitem{Krantz} S.G. Krantz. On functions in $p$-adic BMO and the distribution of prime integers. \emph{J. Math. Anal. Appl.} 326 (2007), no. 2, 1437--1444.

\bibitem{AL} A.K. Lerner, A simple proof of the $A_2$ conjecture, \emph{Int. Math. Res. Not.}, 2013, no. {\bf 14,} 3159--3170. 

\bibitem{LN}  A.K. Lerner and F. Nazarov.  Intuitive dyadic calculus: the basics.  \emph{Expo. Math}. 37 (2019), no. 3, 225--265.

\bibitem{LPW} J. Li, J. Pipher, L.A. Ward, Dyadic structure theorems for multiparameter function spaces, \emph{Rev. Mat. Iberoam}. {\bf 31} (2015), no. 3, 767--797.

\bibitem{P} C. Pereyra.  Dyadic harmonic analysis and weighted inequalities: the sparse revolution.
In: Aldroubi A., Cabrelli C., Jaffard S., Molter U. (eds) New Trends in Applied Harmonic Analysis, Volume 2. Applied and Numerical Harmonic Analysis. Birkhauser, Cham (2019) 259--239.  Available at arXiv:1812.00850v1

\bibitem{P2} C. Pereyra, Weighted inequalities and dyadic harmonic analysis. \emph{Excursions in harmonic analysis. Volume 2}, 281--306, Appl. Numer. Harmon. Anal., Birkhauser/Springer, New York, 2013.


\bibitem{PWX} J. Pipher, L.A. Ward, and X. Xiao. Geometric-arithmetic averaging of dyadic weights. Rev. Mat. Iberoam. 27 (2011), no. 3, 953--976.

\bibitem{TM} T. Mei, $BMO$ is the intersection of two translates of dyadic $BMO$.  \emph{C.R. Acad. Sci. Paris, Ser}. I {\bf 336} (2003), 1003--1006.


\end{thebibliography}
\end{document}